\providecommand{\U}[1]{\protect\rule{.1in}{.1in}}
\newtheorem{theorem}{Theorem}[section]
\newtheorem{acknowledgement}[theorem]{Acknowledgement}
\newtheorem{corollary}[theorem]{Corollary}
\newtheorem{definition}[theorem]{Definition}
\newtheorem{example}[theorem]{Example}
\newtheorem{lemma}[theorem]{Lemma}
\newtheorem{proposition}[theorem]{Proposition}
\newtheorem{remark}[theorem]{Remark}
\newenvironment{proof}[1][Proof]{\noindent\textbf{#1.} }{\ \rule{0.5em}{0.5em}}
\begin{document}

\title{Limit behaviour of $\mu-$equicontinuous cellular automata }
\author{Felipe Garc\'{\i}a-Ramos\\felipegra@math.ubc.ca\\University of British Columbia}
\maketitle

\begin{abstract}
The concept of $\mu-$equicontinuity was introduced in \cite{Gilman1} to
classify cellular automata. We show that under some conditions the sequence of
Cesaro averages of a measure $\mu,$ converge under the actions of a $\mu
-$equicontinuous CA. We address questions raised in \cite{BlanchardEQ} on
whether the limit measure is either shift-ergodic, a uniform Bernoulli measure
or ergodic with respect to the CA. Many of our results hold for CA on
multidimensional subshifts.

\end{abstract}
\tableofcontents

\section{Introduction}

Cellular automata (CA) are discrete systems that depend on local rules.
Hedlund \cite{hedlund} characterized CA using dynamical properties:
$\phi:\left\{  1,2,...n\right\}  ^{\mathbb{Z}}\rightarrow\left\{
1,2,...n\right\}  ^{\mathbb{Z}}$ is a cellular automaton if and only if it is
continuous (with respect to the Cantor product topology) and shift-commuting.
This means every CA is a topological dynamical system (TDS), $i.e.$ a
continuous transformation $\phi$ on a compact metric space $X$. The dynamical
behaviour of these systems can range from very predictable to very chaotic.
Equicontinuity represents predictability. A TDS is equicontinuous if the
family $\{\phi^{i}\}$ is equicontinuous, that is, whenever two points $x,y\in
X$ are close, then $\phi^{i}(x),\phi^{i}(y)$ stay close for all $i\in
\mathbb{Z}_{+}$. Sensitivity (or sensitivity to initial conditions) is
considered a weak form of chaos. There are different classifications of
cellular automata and TDS using equicontinuity and sensitivity (see
~\cite{AkinAuslander} and \cite{KurkaEQ}).

Equicontinuity is a very strong property particularly for cellular automata;
different attempts have been made to define weaker but similar properties.
Using shift-ergodic probability measures, Gilman \cite{Gilman1}\cite{Gilman2}
introduced the concept of $\mu-$equicontinuity for cellular automata: $x\in X$
is a $\mu-$equicontinuity point if for \textit{most }$y$ close to $x$ we have
that $\phi^{i}(x),\phi^{i}(y)$ stay close for all $i\in\mathbb{Z}_{+}$. A CA
is $\mu-$equicontinuous if almost every point is $\mu-$equicontinuous. He also
introduced $\mu-$sensitivity ($\mu-$expansivity) and showed that a CA is
$\mu-$equicontinuous if and only if it is not $\mu-$sensitive.

The study of long term behaviour is a main topic of interest of dynamical
systems/ergodic theory. Long term behaviour can be studied for points, sets,
or measures. In particular one may ask if the orbit of a measure converges
weakly. Limit behaviour of measures under CA have been studied mainly for two
subclasses, linear and $\mu-$equicontinuous.

In \cite{LindXOR} Lind studied the limit behaviour of the CA on the binary
full-shift defined by adding the value of two consecutive positions
$\operatorname{mod}2$. He concluded that the weak limit of the Cesaro average
of every Bernoulli measure is the uniform Bernoulli measure. This result has
been generalized to other linear expansive CA, and it has been shown that the
Cesaro weak limit of an ergodic Markov measure is the uniform Bernoulli (or in
a more general setting the measure of maximal entropy)\cite{pivatoalgebraic}%
\cite{MassMartinezCesaro}.

In \cite{BlanchardEQ} Blanchard and Tisseur studied CA and measures tha give
equicontinuity points full measure. These CA are $\mu-$equicontinuous but
$\mu-$equicontinuous CA may not have any equicontinuity point (like in Example
\ref{product1}). They showed that the Cesaro weak limit exists and they asked
questions about the dynamical behaviour of the limit measure. In particular
they asked when the limit measure is shift-ergodic, a measure of maximal
entropy or $\phi-$ergodic. In this paper we address those questions; we show
that those three conditions are very strong.

We characterize $\mu-$equicontinuity on shifts of finite type using locally
periodic behaviour; $\mu-LEP$ (Proposition \ref{1i3}). We present a natural
generalization of Blanchard-Tisseur's result (Theorem \ref{cesaro}). In
section 3.2 we present the main results of this paper. Let $\phi$ be a CA and
$\mu$ a $\sigma-$ergodic measure that gives equicontinuity points full
measure. We show the limit measure is of maximal entropy (Theorem
\ref{mmecor}) if and only if $\phi$ is surjective and the original measure is
the measure of maximal entropy. We show that if $\phi$ is surjective then the
limit measure is shift-ergodic if and only $\mu$ is $\phi-$invariant (Theorem
\ref{sigmaergodic}). Finally we show that if the limit measure is ergodic with
respect to $\phi$ then the system is isomorphic (measurably) to a cyclic
permutation on a finite set (Corollary \ref{odometer}).

Some of our results hold for CA on multidimensional subshifts, in the cases
where they don't we present weaker analogous results. We also present several
results for $\mu-LEP$ and $\mu-$equicontinuous systems, which may not have any
equicontinuity points (like in Example \ref{product1}).

\bigskip

\begin{acknowledgement}
I would like to thank Brian Marcus and Tom Meyerovitch for their suggestions
and comments.
\end{acknowledgement}

\section{Equicontinuity and local periodicity}

\subsection{Definitions}

A \textbf{topological dynamical system (TDS)} is a pair $(X,\phi)$ where $X$
is a compact metric space and $\phi:X\rightarrow X$ is a continuous transformation.

The\textbf{\ }$n-$\textbf{window}, $W_{n}\subset$ $\mathbb{Z}^{d}$ is defined
as the cube of radius $n$ centred at the origin; a \textbf{window}%
\textit{\ }is an $n-$window for some $n$. For any set $W\subset\mathbb{Z}^{d}$
and $x\in\mathcal{A}^{\mathbb{Z}^{d}},$ $x_{W}\in$ $\mathcal{A}^{W}$ is the
restriction of $x$ to $W.$ We will endow $\mathcal{A}^{\mathbb{Z}^{d}}$ with
the Cantor (product) topology; this is the same topology obtained by the
metric given by $d(x,y)=\frac{1}{2^{m}},$ where $m$ is the largest integer
such that $x_{W_{m}}=y_{W_{m}}$. We denote the\textbf{\ balls} with
$B_{n}(x)\mathbf{:}=\left\{  z\mid d(x,z)\leq\frac{1}{2^{n}}\right\}  $.

We define the\textbf{\ full }$\mathcal{A}$\textbf{-shift} as the metric space
$\mathcal{A}^{\mathbb{Z}^{d}}.$ For $i\in\mathbb{Z}^{d}$ we will use
$\sigma_{i}:\mathcal{A}^{\mathbb{Z}^{d}}\rightarrow\mathcal{A}^{\mathbb{Z}%
^{d}}$ to denote the \textbf{shift maps} (the maps that satisfy $x_{i+j}%
=(\sigma_{i}x)_{j}$ for all $x\in X$ and $i,j\in\mathbb{Z}^{d}).$ The algebra
of sets generated by balls and their shifts is called the \textbf{algebra of}
\textbf{cylinder sets}. A subset $X\subset\mathcal{A}^{\mathbb{Z}^{d}}$ is a
\textbf{subshift} (or \textbf{shift space}) if it is closed and $\sigma_{i}%
-$invariant for all $i\in\mathbb{Z}^{d}.$ If $d=1$ we say the space is
\textbf{1D}. A \textbf{cellular automaton} \textbf{(CA)} is a pair $(X,\phi)$
where $X$ is a subshift and $\phi(\cdot):X\rightarrow X$ is a continuous
$\sigma-$commuting map, i.e. $\phi$ commutes with all the $\mathbb{Z}^{d}$
shifts. We say $(X,\phi)$ is a 1D CA if $X$ is a 1D subshift. In most of the
literature cellular automata is studied only on full-shifts. In our definition
a 1D subshift itself is a CA. Cellular automata of this kind are also known as
shift endomorphisms.

A \textbf{one sided subshift} is a set $X\subset\mathcal{A}^{\mathbb{N}}$ that
is closed and $\sigma-$invariant (i.e $\sigma(X)\subset X$)$.$

The following theorem was established in \cite{hedlund} for 1D CA on
full-shifts. The same result holds for CA on higher dimensional subshifts.

\begin{theorem}
[Curtis-Hedlund-Lyndon]Let $X$ be a shift space, and $\phi(\cdot):X\rightarrow
X$ a function . The map $\phi$ is a CA if and only if there exists a
non-negative integer $n$, and a function $\Phi\left[  \cdot\right]
:\mathcal{A}^{W_{n}}\rightarrow\mathcal{A},$ such that $(\phi(x))_{i}%
=\Phi\left[  (\sigma_{i}x)_{W_{n}}\right]  $ $.$ The \textbf{radius} of the CA
is the smallest possible $n.$
\end{theorem}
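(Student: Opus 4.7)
The plan is to dispatch the two directions separately: the ``if'' direction is essentially formal from the definition of a local rule, while the ``only if'' direction is where the content lives and rests on compactness of $X$ together with the fact that the alphabet $\mathcal{A}$ is finite. For the ``if'' direction I would simply observe that if $\phi$ is given by $(\phi(x))_i = \Phi[(\sigma_i x)_{W_n}]$, then each coordinate of $\phi(x)$ depends on only finitely many coordinates of $x$, which gives continuity in the Cantor product topology, and the uniform application of the same $\Phi$ at every coordinate immediately yields $\sigma$-commutativity.

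For the ``only if'' direction I would begin with the $0$-coordinate map $x \mapsto (\phi(x))_0$ from $X$ to $\mathcal{A}$. Since $\mathcal{A}$ is finite (hence discrete) and $\phi$ is continuous, for each $x \in X$ there exists an $n_x$ with $(\phi(y))_0 = (\phi(x))_0$ whenever $y \in B_{n_x}(x) \cap X$. The collection $\{B_{n_x}(x) \cap X\}_{x \in X}$ is then an open cover of the compact space $X$, and I would extract a finite subcover and take $n$ to be the maximum of the finitely many $n_{x_j}$ that appear. A short check---any $y, z \in X$ sharing the same $W_n$-pattern both fall in some common $B_{n_{x_j}}(x_j)$---shows that $(\phi(x))_0$ is determined solely by $x_{W_n}$. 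I can then define $\Phi: \mathcal{A}^{W_n} \to \mathcal{A}$ by $\Phi[x_{W_n}] := (\phi(x))_0$ on patterns appearing in $X$ and extend arbitrarily elsewhere, after which $\sigma$-commutativity propagates the formula to every coordinate:
\[
(\phi(x))_i = (\sigma_i \phi(x))_0 = (\phi(\sigma_i x))_0 = \Phi[(\sigma_i x)_{W_n}].
\]

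The only real obstacle is the compactness-to-uniformity step that promotes the pointwise window size $n_x$ to a single $n$ valid for all $x \in X$; once in hand, the rest is bookkeeping, and minimality of $n$ (the radius) is extracted by taking the smallest such uniform window. The argument is entirely dimension-free, so the same reasoning applies to $X \subset \mathcal{A}^{\mathbb{Z}^d}$ with $W_n$ the $d$-dimensional cube of radius $n$, which handles the extension to higher-dimensional subshifts noted just before the statement.
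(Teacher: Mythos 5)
Your argument is correct: the ``if'' direction is the routine verification of continuity and shift-commutation from a local rule, and the ``only if'' direction correctly uses that $x\mapsto(\phi(x))_0$ is a continuous map into the finite discrete set $\mathcal{A}$, hence locally constant, and promotes the pointwise window $n_x$ to a uniform $n$ by compactness before propagating via $\sigma$-commutativity. The paper itself offers no proof to compare against --- it states the theorem as a classical result cited to Hedlund and merely remarks that it extends to higher-dimensional subshifts --- but what you have written is the standard proof, and your closing observation that the argument is dimension-free is exactly the justification the paper leaves implicit for that extension.
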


\begin{definition}
We say $X\subset\mathcal{A}^{\mathbb{Z}^{d}}$ is a \textbf{shift of finite
type} \textbf{(SFT)} if there exists $n\in\mathbb{N}$ and a finite list of
forbidden patterns $\left\{  B_{i}\right\}  \subset\mathcal{A}^{W_{n}}$ such
that $x\in X$ if and only if none of the elements of $\left\{  B_{i}\right\}
$ appear in $x.$
\end{definition}

\begin{example}
The 1D SFT obtained by the forbidden word $\left\{  11\right\}  $ (i.e. the
set of doubly infinite sequences that never have two $1^{\prime}s$ together)
is commonly known as the golden mean shift.
\end{example}

For more information on subshifts see \cite{lindmarcus}.

\bigskip

\subsection{Topological equicontinuity and local periodicity}

\begin{definition}
\label{orbitm}Given a CA $(X,\phi),$ we define the \textbf{orbit metric}
$d_{\phi}$ on $X$ as $d_{\phi}(x,y):=\sup_{i\geq0}\left\{  d(\phi^{i}%
x,\phi^{i}y)\right\}  ,$ and the \textbf{orbit balls} as
\[
O_{m}(x)\mathbf{:}=\left\{  y\mid d_{\phi}(x,y)\leq\frac{1}{2^{m}}\right\}
=\left\{  y\mid d(\phi^{i}(x),\phi^{i}(y)\leq\frac{1}{2^{m}}\text{ }\forall
i\in\mathbb{N}\right\}  .
\]

A point $x$ is an \textbf{equicontinuity point} of $\phi$ if for all
$m\in\mathbb{N}$ there exists $n\in\mathbb{N}$ such that $B_{n}(x)\subset
O_{m}(x)$. The transformation $\phi$ is \textbf{equicontinuous} if every
$x\in$ $X$ is an equicontinuity point.
\end{definition}

We have that $\phi$ is equicontinuous if and only if the family $\left\{
\phi^{i}\right\}  _{i\in\mathbb{N}}$ is equicontinuous.

\bigskip If $X$ is a subshift with dense periodic points, then a CA $(X,\phi)$
is equicontinuous if and only if it is eventually periodic (i.e. there exists
$p$ and $p^{\prime}$ such that $\phi^{p+p^{\prime}}=\phi^{p})$\cite{KurkaEQ}%
\cite{emilygamber}.

A weaker notion of periodicity that is related to equicontinuity is local periodicity.

\begin{definition}
\label{deflep}Let $(X,\phi)$ be a CA. The set of $m$-\textbf{locally periodic
}points of $\phi,$ \textbf{$LP$}$_{m}\mathbf{(}$\textbf{$\phi$}$\mathbf{)},$
is the set of points $x$ such that $(\phi^{i}x)_{W_{m}}$ is periodic (with
respect to $i$); the set of \textbf{locally periodic }points is defined as
$LP(\phi)\mathbf{:=\cap}LP_{m}(\phi)$. Similarly, the set of $m-$%
\textbf{locally eventually periodic }points of $\phi,$ \textbf{$LEP$}%
$_{m}\mathbf{(}$\textbf{$\phi$}$\mathbf{),}$ is the set of points $x$ such
that $(\phi^{i}x)_{W_{m}}$ is eventually periodic (with respect to $i$)$;$ the
set of \textbf{locally eventually periodic points }$\ $is defined as
$LEP\mathbf{(}$\textbf{$\phi$}$\mathbf{):=}\cap LEP_{m}(\phi)\mathbf{.}$

A transformation $\phi$ is \textbf{locally eventually periodic} $(LEP)$ if
$LEP(\phi)=X$ and\textbf{\ locally periodic}\emph{\ }$(LP)$ if $LP(\phi)=X.$

For $x\in LEP_{m}\mathbf{(}$\textbf{$\phi$}$\mathbf{)}$ the smallest period
will be denoted as $p_{m}(x),$ and then the smallest preperiod as $pp_{m}(x).$
\end{definition}

It is easy to see that $x\in LEP(\phi)$ if and only if $(\phi^{n}x)_{i}$ is
eventually periodic for all $i\in\mathbb{Z}.$

\begin{proposition}
\label{eqlep}Let $(X,\phi)$ be a CA. If $(X,\phi)$ is equicontinuous then it
is LEP.
\end{proposition}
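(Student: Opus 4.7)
The plan is to combine uniform equicontinuity (coming from compactness of $X$) with a pigeonhole argument on the finite set of $n$-cylinders. Fix $x \in X$ and $m \in \mathbb{N}$; the goal is to show that $(\phi^{i}x)_{W_{m}}$ is eventually periodic in $i$.

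First I would upgrade pointwise equicontinuity to uniform equicontinuity. Since $X$ is a compact metric space and, as remarked just after Definition \ref{orbitm}, $\phi$ equicontinuous is equivalent to the family $\{\phi^{i}\}_{i \in \mathbb{N}}$ being equicontinuous, a standard compactness argument yields $n = n(m) \in \mathbb{N}$ such that $B_{n}(y) \subset O_{m}(y)$ for every $y \in X$. Concretely: if $d(y, z) \leq 2^{-n}$ then $d(\phi^{i}y, \phi^{i}z) \leq 2^{-m}$ for all $i \geq 0$.

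Next I apply pigeonhole to the orbit of $x$. Each point $\phi^{i}(x)$ is determined on $W_{n}$ by its restriction $(\phi^{i}x)_{W_{n}} \in \mathcal{A}^{W_{n}}$, and this set is finite. Hence there exist integers $0 \leq i < j$ with $(\phi^{i}x)_{W_{n}} = (\phi^{j}x)_{W_{n}}$, i.e., $\phi^{j}(x) \in B_{n}(\phi^{i}(x))$. Applying uniform equicontinuity at the point $\phi^{i}(x)$ gives $\phi^{j}(x) \in O_{m}(\phi^{i}(x))$, so $d(\phi^{i+k}(x), \phi^{j+k}(x)) \leq 2^{-m}$ for every $k \geq 0$, which forces $(\phi^{i+k}x)_{W_{m}} = (\phi^{j+k}x)_{W_{m}}$ for every $k \geq 0$. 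Thus $(\phi^{\ell}x)_{W_{m}}$ is periodic of period dividing $j - i$ from index $\ell = i$ onward, so $x \in LEP_{m}(\phi)$. Since $x$ and $m$ were arbitrary, $\phi$ is $LEP$.

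No step is really an obstacle here; the only place one needs to be careful is the passage from pointwise to uniform equicontinuity, but this is immediate from compactness of $X$ together with the characterization of equicontinuity as equicontinuity of the family $\{\phi^{i}\}$ already recorded in the paper.
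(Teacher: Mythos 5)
Your proof is correct and follows essentially the same route as the paper's: upgrade equicontinuity to uniform equicontinuity via compactness, then use finiteness of $\mathcal{A}^{W_{n}}$ to find two times $i<j$ with $\phi^{j}x\in B_{n}(\phi^{i}x)\subset O_{m}(\phi^{i}x)$, which forces eventual periodicity of $(\phi^{\ell}x)_{W_{m}}$. You merely make explicit the pigeonhole step that the paper leaves implicit.
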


\begin{proof}
Let $m\in\mathbb{N}.$ Since $X$ is compact equicontinuity implies uniform
equicontinuity, hence there exists $n\in\mathbb{N}$ such that if $y\in
B_{n}(x)$ then $y\in O_{m}(x).$

Let $x\in X$. There exist $j>j^{\prime}$such that $(\phi^{j}x)_{i}%
=(\phi^{j^{\prime}}x)_{i}$ for $\left\vert i\right\vert \leq n.$ Thus
$\phi^{j}x\in B_{n}(\phi^{j^{\prime}}x)$ and hence $\phi^{j}x\in O_{m}%
(\phi^{j^{\prime}}x).$ This implies the orbit ball is eventually periodic so
$x\in LEP_{m}(\phi);$ hence $\phi$ is $LEP$.
\end{proof}

The converse is not true.

\begin{example}
Let $X\subset\left\{  0,1\right\}  ^{\mathbb{Z}}$ be the subshift that
contains the points that contain at most one $1.$ We have that $(X,\sigma)$ is
LEP but $0^{\infty}$ is not an equicontinuity point. Hence LEP does not imply equicontinuity.
\end{example}

Nonetheless we will see that LP implies equicontinuity.

The following is an unpublished result by\ Chandgotia \cite{Nishant}. We give
the proof for completeness.

\begin{proposition}
Let $(X,\sigma)$ be a one-sided subshift. If $X$ has infinitely many periodic
points then $X$ contains a non-periodic point.
\end{proposition}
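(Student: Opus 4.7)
My plan is to prove the contrapositive: if every point of $X$ is periodic, then $X$ is finite, contradicting the hypothesis of infinitely many periodic points.

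Set $X_n := \{x\in X:\sigma^n x=x\}$, which is closed in $X$ as the coincidence set of two continuous maps and has at most $|\mathcal A|^n$ elements (a point of $X_n$ is determined by its first $n$ coordinates). The assumption gives $X=\bigcup_{n\geq 1}X_n$, so by the Baire category theorem applied to the compact metric space $X$ some $X_{n_0}$ has non-empty relative interior; hence there is a finite word $w$ with $[w]\cap X\subseteq X_{n_0}$, and this cylinder is a finite set.

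The key observation is then that $\sigma|_X$ is a bijection. Surjectivity is immediate from $x=\sigma(\sigma^{p(x)-1}x)$. For injectivity, if $\sigma x=\sigma y$ with $x\neq y$ then $x_j=y_j$ for every $j\geq 1$; evaluating at $j=\mathrm{lcm}(p(x),p(y))\geq 1$ and using the periodicity of both points forces $x_0=x_j=y_j=y_0$, a contradiction. Being a continuous bijection of a compact metric space, $\sigma|_X$ is a homeomorphism, so $\sigma^{-k}$ makes sense on $X$. This propagates the cylinder: whenever $w$ appears in $x\in X$ at some position $k\geq 0$ we have $\sigma^k x\in[w]\cap X\subseteq X_{n_0}$, so $\sigma^{n_0+k}x=\sigma^k x$ and applying $\sigma^{-k}$ yields $x\in X_{n_0}$. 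Thus the set $E:=\{x\in X:w\text{ occurs as a factor of }x\}$ is contained in the finite set $X_{n_0}$.

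The main obstacle is to show that $w$ in fact occurs in every $x\in X$, so that $X=E$ is finite. My plan is a Fine and Wilf/compactness argument carried out in the conjugate two-sided subshift. If $X\setminus E$ were non-empty, it would be a closed, $\sigma$-invariant, proper sub-subshift of $X$ still consisting entirely of periodic points; extracting from it a convergent sequence of distinct periodic points $x_k\to x^*$ of minimal periods $p_k\to\infty$ (forced by the finiteness of each $X_p$) with $x^*$ periodic of minimal period $p^*$, I would pass to the two-sided subshift $Y:=\{(\,(\sigma^j x)_0\,)_{j\in\mathbb Z}:x\in X\}$, on which $\sigma$ is already bijective and agreement is symmetric in both directions. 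Once the two-sided agreement window between the images of $x_k$ and $x^*$ exceeds $p_k+p^*$, Fine and Wilf's periodicity theorem forces the minimal period of $x_k$ to divide $\gcd(p_k,p^*)\leq p^*$, contradicting $p_k\to\infty$. Guaranteeing that this window is long enough — the delicate point where the two-sided conjugate is essential — is the step I expect to require the greatest care. Given the contradiction, $X\setminus E=\emptyset$, $X$ is finite, and the proposition follows.
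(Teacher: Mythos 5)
Your opening moves are sound and genuinely different from the paper's: writing $X=\bigcup_{n}X_{n}$ with each $X_{n}$ closed and finite, invoking Baire to get a finite non-empty cylinder $[w]\cap X\subseteq X_{n_{0}}$, checking that $\sigma|_{X}$ is a bijection, and concluding that $E=\{x\in X: w\text{ occurs in }x\}$ is contained in the finite set $X_{n_{0}}$ --- all of that is correct. The gap is exactly where you flagged it, and it is not a technicality that more care will fix: the Fine--Wilf step cannot be run from convergence alone. Fine and Wilf requires the common window of $x_{k}$ and $x^{*}$ to have length at least $p_{k}+p^{*}-\gcd(p_{k},p^{*})$, but $x_{k}\to x^{*}$ only guarantees agreement on a window whose length $\ell_{k}$ tends to infinity with no control of $\ell_{k}$ relative to $p_{k}$, which is also tending to infinity. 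The standard example is $x^{*}=0^{\infty}$ and $x_{k}=(0^{k}1)^{\infty}$: here $x_{k}\to x^{*}$, $p_{k}=k+1$, $p^{*}=1$, the agreement window has length exactly $k<p_{k}+p^{*}-\gcd(p_{k},p^{*})=k+1$, and the Fine--Wilf conclusion (that $p_{k}$ divides $1$) is of course false. Passing to the two-sided conjugate does not repair this: the backward agreement window between $\tilde x_{k}$ and $\tilde x^{*}$ is just as uncontrolled as the forward one (in the example it is again only of order $k$). So the final contradiction is not established and the argument is incomplete.

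For contrast, the paper sidesteps this difficulty by constructing a non-periodic point directly rather than deriving a contradiction about periods of nearby periodic points: either some word $w$ has unbounded return times over the periodic points, in which case a limit of suitably chosen periodic points contains $w$ only once and so cannot be periodic; or all return times are bounded, in which case a diagonalization produces a limit point $y$ with $\left\vert \mathcal{B}_{r}(y)\right\vert >r$ for every $r$, which rules out periodicity. If you want to salvage your own line, you need a mechanism that produces agreement windows of length comparable to $p_{k}+p^{*}$ --- convergence never gives this --- or a different way to exploit the finiteness of $E$; note that $X\setminus E$ is again an infinite subshift all of whose points are periodic, which invites an iteration (a Cantor--Bendixson-type derivation), but that iteration does not obviously terminate in finitely many steps.
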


\begin{proof}
For $x\in\mathcal{A}^{\mathbb{N}}$ a periodic point$,$ the set $\mathcal{B}%
_{r}(x)$ denotes all the possible words of size $r$ that appear in $x;$ we
also define $\mathcal{B}(x):=\cup\mathcal{B}_{r}(x);$ the minimal period of
the word $w\in\mathcal{B}(x)$ in $x$ (i.e. the minimal "space" in x between
consecutive ocurrences of $w$) is denoted by $p_{x}(w).$ We say $X$ has
bounded periodic words if for all $w\in\cup_{x\text{ periodic}}\mathcal{B}(x)$
there exists $n_{w}$ such that $p_{x}(w)\leq n_{w}$ for all periodic points
$x$.

If $x\in\mathcal{A}^{\mathbb{N}}$ is periodic with minimal period $n,$ then
$\left\vert \mathcal{B}_{n}(x)\right\vert =n$ and $\left\vert \mathcal{B}%
_{r}(x)\right\vert >r$ for $1\leq r<n.$

Suppose $X$ is not a subshift with bounded periodic words. So there exists
$w\in\mathcal{B}(x)$ $\ $and $x^{n}\in X$ a sequence of periodic points such
that $p_{x^{n}}(w)\geq n$ and $x^{n}$ begins with $w.$ Any limit point of the
sequence $x^{n}$ contains $w$ only once, hence it is not periodic.

Now suppose $X$ has bounded periodic words. Let $x^{1,n}$ be a sequence of
periodic points such that $p(x^{1,n})>$ $n$ and $\mathcal{B}_{1}(x^{1,n})$ is
constant. Inductively, define $x^{i,n}$ to be a subsequence of $x^{i-1,n}$
such that $\mathcal{B}_{i}(x^{i,n})$ is constant. We can then find a sequence
of points $\left(  y^{n}\right)  \in X$ such that $p(y^{n})>n$ (and hence
$\lim_{n\rightarrow\infty}\left\vert \mathcal{B}_{r}(y^{n})\right\vert >r$ for
all $r)$ and the sequence of sets $\left(  \mathcal{B}_{r}(y^{n})\right)  $ is
eventually constant for all $r$. Let $y$ be a subsequential limit of $y^{n}$
and $w\in\lim_{n\rightarrow\infty}\mathcal{B}_{r}(y^{n}).$ There exists
$n_{w}$ such that $p_{y^{n}}(w)\leq n_{w}$ for all $n,$ so $w\in
\mathcal{B}_{r}(y^{n}).$ This means that $\left\vert \mathcal{B}%
_{r}(y)\right\vert \geq\lim_{n\rightarrow\infty}\left\vert \mathcal{B}%
_{r}(y^{n})\right\vert >r$ for all $r$ and hence $y$ is not periodic. \ 
\end{proof}

\begin{corollary}
Let $X$ be a one-sided subshift that contains only $\sigma-$periodic\ points.
Then $X$ is finite and hence $(X,\sigma)$ is equicontinuous.
\end{corollary}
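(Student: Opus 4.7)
The plan is to derive this directly from the contrapositive of the preceding proposition, together with an easy observation about finite metric dynamical systems.

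First I would argue that $X$ must be finite. By hypothesis every $x\in X$ is $\sigma$-periodic, so $X$ contains no non-periodic point. The preceding proposition states that any one-sided subshift with infinitely many periodic points must contain a non-periodic point. Applying its contrapositive, $X$ has only finitely many periodic points; since those are all of its points, $X$ is finite.

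Once $X$ is a finite subset of $\mathcal{A}^{\mathbb{N}}$, I would conclude equicontinuity from the general fact that any TDS on a finite metric space is equicontinuous. Concretely, let $\delta := \min\{d(x,y) : x\neq y,\ x,y\in X\} > 0$, and pick $n$ with $2^{-n} < \delta$. Then $B_{n}(x)\cap X = \{x\}$ for every $x\in X$, so $\sigma^{i}(B_{n}(x)\cap X) = \{\sigma^{i}x\}$ lies in any orbit ball $O_{m}(x)$ trivially. Thus every point of $X$ is an equicontinuity point of $\sigma$.

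There really is no obstacle here: all the content lives in the preceding proposition, and this corollary is just a two-line packaging. The only place to be slightly careful is in stating the contrapositive — one must note that "all points periodic" rules out the alternative in the proposition of producing a non-periodic limit point — and in observing that finiteness of the underlying space immediately trivializes the equicontinuity definition from the earlier subsection.
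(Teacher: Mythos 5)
Your argument is correct and is exactly the intended one: the paper states this corollary without proof as an immediate consequence of the preceding proposition, and your contrapositive step plus the observation that a finite subshift is trivially equicontinuous (small balls are singletons) fills in precisely what the author left implicit. Nothing is missing.
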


\begin{proposition}
Let $(X,\phi)$ be a CA. If $(X,\phi)$ is LP then it is equicontinuous.
\end{proposition}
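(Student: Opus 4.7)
The plan is to convert the local-periodicity hypothesis into a finiteness statement by way of the previous corollary, and then use a clopen-partition argument to produce the required open neighborhood. Fix $x\in X$ and $m\in\mathbb{N}$; the goal is to exhibit $n$ such that $B_{n}(x)\subset O_{m}(x)$.

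I first introduce the trajectory map $\Psi_{m}:X\to(\mathcal{A}^{W_{m}})^{\mathbb{N}}$ defined by $\Psi_{m}(y)_{i}:=(\phi^{i}y)_{W_{m}}$. This map is continuous, because if $y$ and $y'$ agree on a large enough window then the continuity of each $\phi^{i}$ forces $(\phi^{i}y)_{W_{m}}=(\phi^{i}y')_{W_{m}}$ for all $i\leq N$, so $\Psi_{m}(y)$ and $\Psi_{m}(y')$ agree in their first $N+1$ symbols. Moreover it intertwines $\phi$ with the shift, $\sigma\circ\Psi_{m}=\Psi_{m}\circ\phi$. Hence $Y_{m}:=\Psi_{m}(X)$ is a compact, $\sigma$-invariant subset of the one-sided full shift over the finite alphabet $\mathcal{A}^{W_{m}}$, i.e., a one-sided subshift. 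The hypothesis $LP(\phi)=X$ translates exactly into the statement that every element of $Y_{m}$ is $\sigma$-periodic, so by the corollary just proved $Y_{m}$ is finite.

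Writing $Y_{m}=\{z^{1},\ldots,z^{k}\}$, the preimages $A_{j}:=\Psi_{m}^{-1}(z^{j})$ form a finite partition of $X$ into closed sets; in any Hausdorff space a finite partition by closed sets is automatically by clopen sets. The piece $A_{j_{0}}$ containing $x$ is therefore an open neighborhood of $x$, so some $n$ satisfies $B_{n}(x)\subset A_{j_{0}}$. For every $y$ in this ball, $\Psi_{m}(y)=\Psi_{m}(x)$, i.e.\ $(\phi^{i}y)_{W_{m}}=(\phi^{i}x)_{W_{m}}$ for all $i\geq 0$, which is exactly $y\in O_{m}(x)$. Since $x$ and $m$ were arbitrary, $\phi$ is equicontinuous. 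The conceptual heart of the argument — and the only step I consider nonroutine — is the passage from the pointwise LP assumption to the uniform finiteness of $Y_{m}$, which is exactly what the previous corollary delivers; checking continuity and equivariance of $\Psi_{m}$ and extracting the clopen partition are standard.
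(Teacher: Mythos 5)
Your proof is correct and follows essentially the same route as the paper: both reduce the claim to the preceding corollary that a one-sided subshift consisting only of $\sigma$-periodic points is finite, the paper via the single-column subshifts $X_{j}$ and you via the full-window trace subshift $Y_{m}=\Psi_{m}(X)$. Your version has the minor merits of handling the whole window $W_{m}$ at once (so it adapts verbatim to the multidimensional setting) and of spelling out the clopen-partition step by which finiteness yields equicontinuity, which the paper leaves implicit.
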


\begin{proof}
Let $X_{j}=\left\{  y\in\mathcal{A}^{\mathbb{N}}\mid y_{i}=(\phi^{i}%
x)_{j}\text{ for some }x\in X\right\}  ,$ i.e. the space of all sequences that
appear as the $j-th$ column of a spacetime diagram$.$ We have that
$(X_{j},\sigma)$ is a one-sided subshift that contains only periodic points,
hence there are only finitely many. This means $\phi$ is equicontinuous.
\end{proof}

\bigskip A point $x$ is \textbf{recurrent} if for every open neighbourhood $U$
the orbit of $x$ under $\phi$ intersects $U$ infinitely often; the set of
recurrent points is denoted by $R(\phi).$ The following lemma will be useful later.

\begin{lemma}
\label{nw}Let $(X,\phi)$ be a CA. Then $R(\phi)\cap LEP(\phi)=LP(\phi).$
\end{lemma}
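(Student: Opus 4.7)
The plan is to prove the two inclusions separately. For $LP(\phi)\subset R(\phi)\cap LEP(\phi)$, the inclusion $LP(\phi)\subset LEP(\phi)$ is immediate from the definitions; for recurrence, observe that if every column $(\phi^{i}x)_{W_{m}}$ is periodic in $i$ with period $p$, then $\phi^{kp}x\in B_{m}(x)$ for all $k$, so $x$ is recurrent because the balls $\{B_{m}(x)\}_{m}$ form a neighbourhood base at $x$.

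For the reverse inclusion $R(\phi)\cap LEP(\phi)\subset LP(\phi)$, fix $x$ in the intersection and $m\in\mathbb{N}$, and write $c_{j}:=(\phi^{j}x)_{W_{m}}$; the goal is to show $c$ is purely periodic in $j$. Let $r$ be the radius of $\phi$, so by iteration $(\phi^{j}y)_{W_{m}}$ is determined by $y_{W_{m+jr}}$. Set $N:=pp_{m}(x)+p_{m}(x)$. By recurrence of $x$ there exists some $i>0$ with $\phi^{i}x\in B_{m+(N-1)r}(x)$; since $\phi^{i}x$ and $x$ then agree on $W_{m+(N-1)r}$, the radius bound gives $c_{i+j}=c_{j}$ for $j=0,1,\dots,N-1$.

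To extend this equality to all $j\geq 0$, I would run a strong induction on $j$. The base case $j<N$ is what was just established; for $j\geq N$, the eventual periodicity gives $c_{j}=c_{j-p_{m}(x)}$ and $c_{i+j}=c_{i+j-p_{m}(x)}$ (both reduced indices still exceed $pp_{m}(x)$), so by the inductive hypothesis applied at $j-p_{m}(x)<j$ one has $c_{i+j}=c_{i+j-p_{m}(x)}=c_{j-p_{m}(x)}=c_{j}$. Hence $c$ has period $i$, i.e.\ $x\in LP_{m}(\phi)$; as $m$ was arbitrary, $x\in LP(\phi)$.

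The delicate point, and the main obstacle, is calibrating the window size $m'=m+(N-1)r$: one must arrange that window-level recurrence of $x$ at scale $m'$ propagates, via the radius of $\phi$, into column-level agreement at scale $m$ over a full preperiod-plus-period span of time steps, which is precisely what is needed to bootstrap eventual periodicity into full periodicity.
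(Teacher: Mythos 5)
Your proof is correct, and it executes the shared core idea --- recurrence of $x$ in the product topology at a window scale calibrated (via the local rule) so that one return controls a block of the column of length $pp_m(x)+p_m(x)$ --- in a genuinely different way from the paper. The paper argues by contradiction: assuming $pp_m(x)=q>0$, it isolates the last preperiodic entry $(\phi^{q-1}x)_{W_m}\neq(\phi^{q+p_m(x)-1}x)_{W_m}$, applies recurrence to $\phi^{q-1}x$ rather than to $x$, and uses the \emph{minimality} of the period $p_m(x)$ to pin down the return time modulo $p_m(x)$ and force the two unequal entries to coincide. You instead give a direct argument: a single return time $i$ yields $c_{i+j}=c_j$ on an initial segment of length $pp_m(x)+p_m(x)$, and a strong induction pushes this to all $j$ using only that $p_m(x)$ is \emph{some} period of the tail, concluding that the whole column is purely periodic with period $i$. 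Your route avoids both the contradiction and any appeal to minimality of the period or preperiod, and it produces an explicit period (the return time); the paper's route makes visible exactly where periodicity would break (the index $q-1$). One cosmetic remark: you invoke the radius $r$ from Curtis--Hedlund--Lyndon to quantify the window $m'=m+(N-1)r$, whereas the paper only needs continuity of $\phi$ to produce some $m'$; both are available here, and your quantitative version is if anything more self-contained.
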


\begin{proof}
Using the definitions it is easy to see that $LP(\phi)\subset R(\phi)\cap
LEP(\phi).$

Let $x\in R(\phi)\cap LEP(\phi)$, $m\in\mathbb{N},$ and $q:=pp_{m}(x)$ (see
Definition \ref{deflep})$.$ Suppose $x\notin LP(\phi).$ This means that $q>0,$
$(\phi^{i+q}x)_{W_{m}}$ is periodic for $i\geq0$, and $(\phi^{q-1}x)_{W_{m}%
}\neq(\phi^{q+p_{m}(x)-1}x)_{W_{m}}.$ Using the continuity of $\phi,$ we know
there exists $m^{\prime}\geq m$ such that for every $y\in B_{m^{\prime}}%
(\phi^{q-1}x),$ $(\phi^{i+q}x)_{W_{m}}=(\phi^{i+q}y)_{W_{m}}$ for $0\leq i\leq
p_{m}(x).$ Using the fact that $\phi^{q-1}x$ is recurrent we obtain that there
exists $N>pp_{m}(x)+p_{m}(x)$ such that $\phi^{N}x\in B_{m^{\prime}}%
(\phi^{q-1}x).$ This means that $(\phi^{N}x)_{W_{m}}\neq(\phi^{q+p_{m}%
(x)-1}x)_{W_{m}}$and $(\phi^{N+i}x)_{W_{m}}=(\phi^{q+p_{m}(x)+i-1}x)_{W_{m}}$
for $0<i\leq p_{m}(x).$ This is a contradiction since the second condition and
the fact that $p_{m}(x)$ is the smallest period implies that there exists
$j>0$ such that $N=q+jp_{m}(x)-1,$ and hence$(\phi^{N}x)_{W_{m}}%
=(\phi^{q+p_{m}(x)-1}x)_{W_{m}}.$
\end{proof}

\subsection{Measure theoretical equicontinuity and local periodicity}

We will use $\mu$ to denote Borel probability measures on $X.$ These do not
need to be invariant under $\phi$. 

\begin{definition}
Let $(X,\phi)$ be a CA and $\mu$ a Borel probability measure on $X$. A point
$x\in X$ is a $\mu-$\textbf{equicontinuity }point\textbf{\ }of $\phi$ if for
all $m\in\mathbb{N},$ one has%
\[
\lim_{n\rightarrow\infty}\frac{\mu(B_{n}(x)\cap O_{m}(x))}{\mu(B_{n}(x))}=1.
\]
$(X,\phi)$ is $\mu-$\textbf{equicontinuous} if almost every $x\in X$ is a
$\mu-$equicontinuity\textbf{ }point. In this case we also say $\mu$ is $\phi
-$\textbf{equicontinuous}.
\end{definition}

The concept of $\mu-$equicontinuity first appeared in \cite{Gilman1}
\cite{Gilman2}, and it was used to classify cellular automata using Bernoulli
measures. There exist CAs that have no equicontinuity points and that are
$\mu-$equicontinuous for every ergodic Markov chain (see Example
\ref{product1}).

Cellular automata with $\mu-$equicontinuous directional dynamics (e.g. when
$\sigma\circ\phi$ is $\mu-$equicontinuous) were studied in \cite{Sablik20081}.

The following result is a consequence of Corollary 7 and Theorem 9 in
\cite{mueqtds}.

\begin{theorem}
[ \cite{mueqtds}]\label{3i1}Let $(X,\phi)$ be a CA and $\mu$ a Borel
probability measure. The following are equivalent:

1) $(X,\phi)$ is $\mu-$equicontinuous

2) For every $\varepsilon>0$ there exists a compact set $M$ such that
$\mu(M)>1-\varepsilon$ and $\left.  \phi\right\vert _{M}$ is equicontinuous.

3) There exists $X^{\prime}\subset X$ such that $X^{\prime}$ is $d_{\phi}%
-$separable and $\mu(X^{\prime})=1.$
\end{theorem}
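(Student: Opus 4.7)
The plan is to establish the cycle $(1) \Rightarrow (2) \Rightarrow (3) \Rightarrow (1)$, leveraging throughout that the Cantor metric $d$ is ultrametric (so the balls $B_n(x)$ are atoms of the finite $n$-cylinder partition $\mathcal{F}_n$) and hence so is $d_\phi = \sup_{i \geq 0} d(\phi^i \cdot, \phi^i \cdot)$; in particular, $y \in O_m(x)$ defines an equivalence relation that partitions $X$ into $O_m$-atoms.

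For $(1) \Rightarrow (2)$ I would fix $\varepsilon > 0$ and note that, for each $m$, the function $x \mapsto \mu(B_n(x) \cap O_m(x))/\mu(B_n(x))$ is $\mathcal{F}_n$-measurable, and by hypothesis tends to $1$ at $\mu$-a.e.\ $x$. A diagonal application of Egorov's theorem then produces a closed (hence compact) set $M$ with $\mu(M) > 1-\varepsilon$ and integers $N_m$ such that $\mu(B_{N_m}(x) \cap O_m(x)) > \tfrac{1}{2}\mu(B_{N_m}(x))$ uniformly for $x \in M$. The key step is a pigeonhole on cylinders: if $x, y \in M$ satisfy $d(x,y) \leq 2^{-N_m}$, ultrametricity forces $B_{N_m}(x) = B_{N_m}(y)$, and the two subsets $B_{N_m}(x) \cap O_m(x)$ and $B_{N_m}(x) \cap O_m(y)$ have total measure strictly exceeding $\mu(B_{N_m}(x))$, so they meet in some $z$; the ultrametricity of $d_\phi$ then yields $d_\phi(x, y) \leq \max(d_\phi(x, z), d_\phi(z, y)) \leq 2^{-m}$. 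Thus $\phi|_M$ is uniformly equicontinuous.

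For $(2) \Rightarrow (3)$, apply (2) with $\varepsilon = 1/k$ to obtain compact sets $M_k$ on which $\phi$ is equicontinuous. On each $M_k$ uniform equicontinuity makes the identity $(M_k, d) \to (M_k, d_\phi)$ uniformly continuous, so $(M_k, d_\phi)$ inherits separability from $(M_k, d)$; the union $X' := \bigcup_k M_k$ is then a $d_\phi$-separable set of full measure. For $(3) \Rightarrow (1)$, a countable $d_\phi$-dense subset of $X'$ supplies representatives of every $O_m$-atom meeting $X'$, so $X'$ decomposes as a countable disjoint union $\bigsqcup_i (O_m(x_i) \cap X')$. The martingale convergence theorem along the filtration $(\mathcal{F}_n)$ gives $\mu(B_n(x) \cap O_m(x_i))/\mu(B_n(x)) \to \chi_{O_m(x_i)}(x)$ for $\mu$-a.e.\ $x$; specializing to $x$ in an atom and taking countable unions over $i$ and $m$ yields the $\mu$-equicontinuity condition at $\mu$-a.e.\ point.

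The main obstacle will be the first implication: upgrading a density-$1$ statement holding pointwise a.e.\ into a uniform topological equicontinuity estimate on a set of large measure. The pigeonhole-on-cylinders trick above depends crucially on $d$ being ultrametric, so that $B_{N_m}(x) = B_{N_m}(y)$ for nearby $x, y$; a general-metric analogue would require either a covering argument or would fail outright.
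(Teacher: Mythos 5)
The paper does not actually prove this theorem: it is imported verbatim from \cite{mueqtds} ("a consequence of Corollary 7 and Theorem 9" there), so there is no in-paper argument to compare against. Your cycle $(1)\Rightarrow(2)\Rightarrow(3)\Rightarrow(1)$ is correct and is essentially the standard Gilman-style argument for this equivalence: the ultrametricity of $d$ and of $d_\phi$, the Egorov-plus-pigeonhole step to pass from a.e.\ density one to uniform equicontinuity on a large compact set, separability of $(M_k,d_\phi)$ as a continuous image of a compact set, and the martingale (Lévy) differentiation along the cylinder filtration to recover the density condition from the countable atom decomposition. Two minor points of precision, neither of which affects correctness: the function $x\mapsto\mu(B_n(x)\cap O_m(x))/\mu(B_n(x))$ is \emph{not} $\mathcal{F}_n$-measurable, since $O_m(x)$ depends on all of $x$ and not just its $n$-cylinder; it is, however, Borel measurable (a decreasing limit over $k$ of the locally constant functions $x\mapsto\mu(B_n(x)\cap\bigcap_{i=0}^{k}\phi^{-i}B_m(\phi^i x))/\mu(B_n(x))$), which is all Egorov needs. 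Also, Egorov produces only a measurable set of large measure; you should invoke inner regularity afterwards to shrink it to a compact $M$ with $\mu(M)>1-\varepsilon$ before running the pigeonhole argument.
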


\begin{definition}
Let $(X,\phi)$ be a CA. If $\mu(LEP(\phi))=1$, we say $(X,\phi)$ is $\mu
-$\textbf{locally eventually periodic} ($\mu-LEP)$ and $\mu$ is\textbf{\ }%
$\phi-$\textbf{locally eventually periodic}\emph{\ (}$\phi-LEP).$ We define
$\mu-LP$ and $\phi-LP$ analogously.
\end{definition}

The concept of $\mu-LEP$ is new in the literature nonetheless it was motivated
by Proposition 5.2 in \cite{Gilman1}.

\begin{definition}
\label{ym}Let $m\in\mathbb{N}$ and $\varepsilon>0.$ We define
\[
Y_{\varepsilon}^{m}:=\left\{  x\mid x\in LEP(\phi),\text{ with }p_{m}(x)\leq
p_{\varepsilon}^{m}\text{ and }pp_{m}(x)\leq pp_{\varepsilon}^{m}\right\}  .
\]

\end{definition}

\begin{remark}
On the set $Y_{\varepsilon}^{m}$ we are only considering a finite possibility
of preperiods and periods. This implies that $Y_{\varepsilon}^{m}$ is equal to
a finite union of orbit balls of size $m;$ hence it is Borel.
\end{remark}

\begin{lemma}
\label{definitiony}Let $m\in\mathbb{N}$ and $\varepsilon>0.$ If $(X,\phi)$ is
$\mu-LEP$ then there exist positive integers $p_{\varepsilon}^{m}$ and
$pp_{\varepsilon}^{m}$ such that $\mu(Y_{\varepsilon}^{m})>1-\varepsilon.$
\end{lemma}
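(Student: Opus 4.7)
The plan is to exhaust $LEP(\phi)$ by a nested increasing sequence of sets, each restricting the period and preperiod to a common finite bound, and then invoke continuity of measure from below. For each positive integer $N$ define
\[
Z_N := \{x \in LEP(\phi) : p_m(x) \leq N \text{ and } pp_m(x) \leq N\}.
\]
The sets $Z_N$ are nested, and since every $x \in LEP(\phi) \subset LEP_m(\phi)$ has finite $p_m(x)$ and $pp_m(x)$, their union is exactly $LEP(\phi)$. By the $\mu$-LEP hypothesis $\mu(LEP(\phi)) = 1$, so continuity of measure from below yields $\mu(Z_N) \nearrow 1$. Picking $N$ large enough that $\mu(Z_N) > 1 - \varepsilon$ and setting $p_\varepsilon^m := pp_\varepsilon^m := N$ makes $Y_\varepsilon^m$ coincide with $Z_N$, as required.

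The one technical point that needs to be checked is Borel measurability of $Z_N$, which is precisely what the remark preceding the lemma records: once $N$ is fixed there are only finitely many admissible pairs $(p_m(x), pp_m(x))$ with both entries bounded by $N$, and for each such pair the set of points with prescribed period and preperiod in the $W_m$-column is a finite union of orbit balls of size $m$ (each of which is clopen). Thus $Z_N$ is a finite union of Borel sets and continuity of measure applies. No serious obstacle arises here; after the measurability observation the argument is essentially a one-line monotone convergence, which is why the lemma is stated as a preparatory fact rather than as a theorem.
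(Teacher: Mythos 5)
Your proof is correct and follows essentially the same route as the paper: exhaust $LEP(\phi)$ by the nested sets of points whose period and preperiod in the $W_{m}$-column are bounded by $N$, use $\mu(LEP(\phi))=1$ and continuity of measure from below, and take $p_{\varepsilon}^{m}=pp_{\varepsilon}^{m}=N$ for $N$ large. The measurability point you raise is exactly what the remark preceding the lemma is there for, so nothing is missing.
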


\begin{proof}
Let
\[
Y:=\cup_{s,k\in\mathbb{N}}\left\{  x\mid x\in LEP(\phi),\text{ with }%
p_{m}(x)\leq s\text{ and }pp_{m}(x)\leq k\right\}  .
\]

Since $(X,\phi)$ is $\mu-LEP$ we have that $\mu(Y)=1.$ Monotonicity of the
measure gives the desired result.
\end{proof}

Given a $\mu-LEP$ transformation $\phi$ and $m$, $\varepsilon>0,$ we will use
$p_{\varepsilon}^{m}$ and $pp_{\varepsilon}^{m}$ to denote a particular choice
of integers that satisfy the conditions of the previous lemma and that satisfy
that $p_{\varepsilon}^{m}\rightarrow\infty$ and $pp_{\varepsilon}%
^{m}\rightarrow\infty,$ as $\varepsilon\rightarrow0.$

Given a subshift $X$, we denote the $\sigma-$periodic points by $P_{X}%
(\sigma).$

The following result was proved in \cite{Gilman2} when $X$ is a full shift,
but the same result holds when $X$ is an SFT.

\begin{lemma}
\label{gilman3}Let $X$ be a 1D SFT with forbidden words of size $q$,
$(X,\phi)$ a CA with radius $r$. If there is a point $x$ and an integer
$m\neq0$ such that $O_{i}(x)\cap\sigma^{-m}O_{i}(x)\neq\emptyset$ with $i\geq
q,r$ then $O_{i}(x)\cap P_{X}(\sigma)\neq\emptyset$.
\end{lemma}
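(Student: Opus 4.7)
Assume without loss of generality that $m>0$. Fix any $y\in O_i(x)\cap\sigma^{-m}O_i(x)$, i.e.\ both $y$ and $\sigma^m y$ lie in $O_i(x)$. Because $\phi$ commutes with $\sigma$, the orbit-ball condition unpacks into the \emph{matching identity}
\[
(\phi^j y)_{W_i} \;=\; (\phi^j y)_{W_i+m} \;=\; (\phi^j x)_{W_i}\qquad (j\ge 0).
\]
The plan is to manufacture a $\sigma$-periodic point $z\in X$ of period $m$ whose spacetime orbit agrees with that of $y$ on $W_i$ row by row; this automatically places $z$ in $O_i(x)\cap P_X(\sigma)$.

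I take $z$ to be the $\sigma$-periodic extension of the block $y_{[-i,\,-i+m-1]}$ with period $m$. A single application of the matching at $j=0$ upgrades agreement with $y$ from this block to the wider window $[-i,\,i+m]$, a fact that covers uniformly the two regimes $m\le 2i+1$ (where the block sits inside $W_i$) and $m>2i+1$ (where the block contains $W_i$). From this I deduce $z\in X$: any length-$q$ window fitting inside a single period block is a subword of $y_{[-i,i+m]}$, and any length-$q$ window straddling a period seam can be rewritten, via the matching, as a subword of $y$. Here the hypothesis $i\ge q$ is exactly what makes the matching window wide enough to bridge every seam.

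The main work is an induction on $j$ showing $(\phi^j z)_l=(\phi^j y)_l$ for every $l\in[-i,i+m]$. Since $\phi$ has radius $r$, the inductive hypothesis at step $j$ propagates directly to the middle sub-interval $[-i+r,\,i+m-r]$ at step $j+1$; the two leftover end-strips $[-i,\,-i+r-1]$ and $[i+m-r+1,\,i+m]$ are handled by combining the $\sigma$-periodicity of $\phi^{j+1}z$ (inherited from $z$, with period $m$) with the matching satisfied by $\phi^{j+1}y$ on $W_i$. Concretely, at each endpoint $l$ I translate by a suitable integer multiple $sm$ into the middle sub-interval, cash in the induction hypothesis on the $\phi^{j+1}z$ side, and then reverse the translation on the $\phi^{j+1}y$ side by iterating the matching identity. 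The hypothesis $i\ge r$ is precisely what makes the two competing constraints on $s$---land in the agreement interval, and keep every intermediate position $l+tm$ inside $W_i$---simultaneously solvable. The main obstacle is this simultaneous bookkeeping when $m$ is small relative to $r$, since then several steps of both periodicity and matching must be orchestrated in tandem; when $m\ge r$, a single application of each suffices.
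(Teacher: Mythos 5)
Your proof is correct, and it fills a gap the paper leaves open: the paper gives no proof of this lemma, merely citing Gilman's full-shift version and asserting that the SFT case is the same. Your construction --- extend the block $y_{[-i,-i+m-1]}$ of a point $y\in O_i(x)\cap\sigma^{-m}O_i(x)$ periodically, use $i\geq q$ to see the result lies in $X$, and induct on $j$ using the radius $r$ in the middle of the window and the identity $(\phi^j y)_l=(\phi^j y)_{l+m}$ for $l\in W_i$ to patch the end strips --- is essentially Gilman's original argument, correctly adapted to the SFT setting, and the bookkeeping with $i\geq r$ that you describe does go through.
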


\bigskip The following proposition is proven in greater generality in
\cite{mueqtds}.

\begin{proposition}
[\cite{mueqtds}]\label{lepeq}Let $(X,\phi)$ be a CA. If $(X,\phi)$ is
$\mu-LEP$ then it is $\mu-$equicontinuous.
\end{proposition}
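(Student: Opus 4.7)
The plan is to verify condition (2) of Theorem~\ref{3i1}: for every $\varepsilon>0$ I will produce a compact set $M_\varepsilon\subseteq X$ with $\mu(M_\varepsilon)>1-\varepsilon$ on which $\phi$ is equicontinuous. Fix $\varepsilon>0$. Applying Lemma~\ref{definitiony} for each $m\in\mathbb{N}$ with tolerance $\varepsilon/2^{m+1}$, I obtain integers $p_m:=p_{\varepsilon/2^{m+1}}^m$ and $pp_m:=pp_{\varepsilon/2^{m+1}}^m$ together with a set $Y^m:=Y_{\varepsilon/2^{m+1}}^m$ of $\mu$-measure at least $1-\varepsilon/2^{m+1}$. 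The remark after Definition~\ref{ym} identifies $Y^m$ with a finite union of orbit balls of size $m$, and since each orbit ball $O_m(x)=\bigcap_{i\in\mathbb{N}}(\phi^i)^{-1}(B_m(\phi^i x))$ is closed, $Y^m$ is closed, hence compact. Setting $M_\varepsilon:=\bigcap_m Y^m$ yields a compact set of $\mu$-measure at least $1-\varepsilon$ by subadditivity.

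Next I would show that $\phi|_{M_\varepsilon}$ is equicontinuous. Fix $m$, let $L_m:=\mathrm{lcm}(1,\dots,p_m)$, and set $N_m:=pp_m+L_m$. Uniform continuity of each $\phi^i$ on $X$ gives some $n\geq m$ such that $d(x,y)\leq 2^{-n}$ implies $(\phi^i x)_{W_m}=(\phi^i y)_{W_m}$ for every $0\leq i\leq N_m$. If in addition $x,y\in M_\varepsilon\subseteq Y^m$, then both sequences $i\mapsto(\phi^i x)_{W_m}$ and $i\mapsto(\phi^i y)_{W_m}$ become periodic from step $pp_m$ onwards with minimal periods dividing $L_m$, so each is $L_m$-periodic on $[pp_m,\infty)$. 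Their agreement on the window $[pp_m,pp_m+L_m]$ spans a full $L_m$-period of each, which propagates agreement to all $i\geq pp_m$, and hence to all $i\geq 0$. Thus $d_\phi(x,y)\leq 2^{-m}$, as required.

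The main (though mild) obstacle is the passage from finite-window agreement on $[0,N_m]$ to agreement of the whole $W_m$-orbits; it is dispatched by inflating both minimal periods to the common value $L_m$, so that a single full period of agreement past the common preperiod bound forces agreement for every time. Once $\phi|_{M_\varepsilon}$ is equicontinuous for every $\varepsilon>0$, Theorem~\ref{3i1}(2) immediately yields $\mu$-equicontinuity of $(X,\phi)$.
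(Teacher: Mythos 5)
Your argument is correct. Note that the paper itself does not prove Proposition \ref{lepeq} at all: it defers to \cite{mueqtds}, so you are supplying a proof the paper omits. Your route through condition (2) of Theorem \ref{3i1} works: $Y^{m}:=Y_{\varepsilon/2^{m+1}}^{m}$ is a finite union of orbit balls, each orbit ball is an intersection of closed sets and hence closed, so $M_{\varepsilon}=\bigcap_{m}Y^{m}$ is compact with $\mu(M_{\varepsilon})\geq1-\varepsilon$; and the lcm/preperiod bookkeeping correctly upgrades agreement of the $W_{m}$-columns on the finite window $[0,pp_{m}+L_{m}]$ to agreement for all times, since two sequences that are $L_{m}$-periodic on $[pp_{m},\infty)$ and agree on one full period there agree everywhere, and they already agree on $[0,pp_{m}]$. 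For comparison, condition (3) of Theorem \ref{3i1} gives an even shorter argument: for each $m$ the set $LEP_{m}(\phi)$ is contained in a countable union of orbit balls of size $m$ (one for each eventually periodic column in $(\mathcal{A}^{W_{m}})^{\mathbb{N}}$), so $LEP(\phi)$ is $d_{\phi}$-separable, and $\mu(LEP(\phi))=1$ is exactly the $\mu$-LEP hypothesis. Your version costs a little more work but produces the explicit Lusin-type compact sets, which is closer in spirit to how the paper later exploits the sets $Y_{\varepsilon}^{m}$.
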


The converse of this proposition is not true in general (see counter-example
in \cite{mueqtds}). We obtain the converse with an extra hypothesis$.$

\begin{proposition}
\label{1i3}Let $X$ be a 1D SFT, $(X,\phi)$ a CA$,$ and $\mu$ a $\sigma
-$invariant probability measure on $X$. Then $(X,\phi)$ is $\mu-$%
equicontinuous if and only if it is $\mu-LEP.$
\end{proposition}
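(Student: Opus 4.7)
The plan is as follows. One direction ($\mu$-LEP implies $\mu$-equicontinuous) is already Proposition \ref{lepeq}, so the real content is showing that $\mu$-equicontinuity together with $\sigma$-invariance of $\mu$ forces $\mu(LEP(\phi)) = 1$. The key tools will be the compact-set characterization Theorem \ref{3i1}(2), Poincar\'e recurrence for the shift, and Gilman's Lemma \ref{gilman3} which is what exploits the SFT hypothesis.

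Let $q$ be the length of the forbidden words defining $X$, $r$ the radius of $\phi$, and fix $i \geq \max(q,r)$ together with $\varepsilon > 0$. First I would invoke Theorem \ref{3i1}(2) to select a compact set $M$ with $\mu(M) > 1-\varepsilon$ such that $\phi|_M$ is equicontinuous. Compactness promotes this to uniform equicontinuity on $M$, so there exists $n \geq i$ with $B_n(y) \cap M \subset O_i(y)$ for every $y \in M$.

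Next I would partition $M$ according to the central $W_n$-block: for each pattern $w$, set $M_w := M \cap \{y : y_{W_n} = w\}$. Applying Poincar\'e recurrence to the $\sigma$-invariant measure $\mu$ on each piece $M_w$ of positive measure, I obtain that for $\mu$-a.e. $x \in M$ there exists some $k \geq 1$ with $\sigma^k x \in M$ and $(\sigma^k x)_{W_n} = x_{W_n}$, i.e. $\sigma^k x \in B_n(x) \cap M$. By the uniform step above this gives $\sigma^k x \in O_i(x)$, so $x \in O_i(x) \cap \sigma^{-k} O_i(x)$ and this intersection is nonempty. Lemma \ref{gilman3} (with $i \geq q,r$ and $k \neq 0$) then produces a $\sigma$-periodic point $z \in O_i(x)$. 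Since $\sigma$-periodic points of any fixed period form a finite set, the $\phi$-orbit of $z$ takes only finitely many values and is therefore eventually periodic, so $(\phi^j z)_{W_i}$ is eventually periodic in $j$; because $z \in O_i(x)$ forces $(\phi^j x)_{W_i} = (\phi^j z)_{W_i}$ for all $j \geq 0$, we conclude $x \in LEP_i(\phi)$.

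This yields $\mu(LEP_i(\phi)) \geq 1-\varepsilon$ for every $\varepsilon > 0$, hence $\mu(LEP_i(\phi)) = 1$ for each $i \geq \max(q,r)$, and a countable intersection gives $\mu(LEP(\phi)) = 1$. The main obstacle is arranging a shift-return to the ``good'' set $M$ that simultaneously lands in the same $W_n$-cylinder as $x$; this is precisely where the $\sigma$-invariance of $\mu$ is used in an essential (rather than incidental) way, via Poincar\'e recurrence on the cylinder refinement of $M$. The SFT hypothesis enters only through Lemma \ref{gilman3}, which is what converts this combined spatial-plus-dynamical closeness into an actual $\sigma$-periodic point inside the orbit ball.
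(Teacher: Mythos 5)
Your proof is correct and follows essentially the same route as the paper's: both arguments reduce to producing $O_i(x)\cap\sigma^{-k}O_i(x)\neq\emptyset$ for some $k\neq 0$ via Poincar\'e recurrence for the $\sigma$-invariant measure, then invoke Lemma \ref{gilman3} to obtain a $\sigma$-periodic (hence eventually $\phi$-periodic) point in the orbit ball, and quote Proposition \ref{lepeq} for the converse. The only difference is in how the shift-return is produced: the paper notes directly that $\mu(O_p(x))>0$ at a $\mu$-equicontinuity point and applies recurrence to the orbit ball itself, whereas you route through the compact-set characterization of Theorem \ref{3i1}(2) and its cylinder refinement --- a slightly longer but equally valid setup.
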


\begin{proof}
Let $X$ be a 1D SFT with forbidden words of size $q$, $(X,\phi)$ a CA with
radius $r$ and $p\geq q,r$. If $x$ is a $\mu-$equicontinuous point then%
\[
\lim_{n\rightarrow\infty}\frac{\mu(B_{n}(x)\cap O_{p}(x))}{\mu(B_{n}(x))}=1.
\]
Using $\mu(O_{p}(x))>0$ and Poincare's recurrence theorem we obtain that
\[
\left\{  y\mid\sigma^{i}(y)\in O_{p}(x)\text{ }i.o.\right\}
\]
is not empty$.$ Using $p\geq q,r$ and Lemma \ref{gilman3} we conclude that
every orbit ball with positive measure contains a $\sigma-$periodic point and
hence $(\phi^{n}x)_{j}$ is eventually periodic for $-p\leq j\leq p$. The
reverse implication is obtained with Proposition \ref{lepeq}.
\end{proof}

Proposition \ref{1i3} shows that $\mu-$equicontinuity and $\mu-LEP$ are
equivalent if $X$ is a 1D SFT and $\mu$ a $\sigma-$invariant measure. We do
not know if this result holds for cellular automata on multidimensional SFTs.
We can show a weaker result (Proposition \ref{aqui}) by strengthening the
$\mu-$equicontinuity hypothesis.

Let $(X,\rho)$ be a metric space$,$ and $A\subset X$. The closure of $A$ is
denoted with $cl_{\rho}(A).$

Recall that $d_{\phi}$ denotes the orbit metric (Definition \ref{orbitm}).

\begin{lemma}
\label{periodic}Let $(X,\phi)$ be a CA. If $\mu(cl_{d_{\phi}}(P_{X}%
(\sigma)))=1,$ then $(X,\phi)$ is $\mu-$LEP.
\end{lemma}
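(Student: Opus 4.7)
The plan is to show the stronger set inclusion $cl_{d_{\phi}}(P_{X}(\sigma))\subset LEP(\phi)$; the measure statement then follows immediately from the hypothesis.

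First I would observe that every $\sigma$-periodic point is in $LEP(\phi)$. The point is that $\phi$ is $\sigma$-commuting, so if $\sigma^{k}y=y$ then $\sigma^{k}\phi^{i}(y)=\phi^{i}(y)$ for every $i\geq 0$. Hence the whole forward $\phi$-orbit of $y$ is contained in the set of $\sigma$-periodic points of period dividing $k$, which is a \emph{finite} subset of $X$ (in 1D there are at most $|\mathcal{A}|^{k}$ such points; in higher dimension it is still finite). A sequence in a finite set is eventually periodic, so $(\phi^{i}y)_{W_{m}}$ is eventually periodic in $i$ for every window size $m$, i.e.\ $y\in LEP(\phi)$.

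Next I would transfer LEP from $y$ to any $x\in cl_{d_{\phi}}(P_{X}(\sigma))$ one window at a time. Fix $m\in\mathbb{N}$. By definition of the closure in $d_{\phi}$, there is a $\sigma$-periodic $y\in P_{X}(\sigma)$ with $d_{\phi}(x,y)\leq 1/2^{m}$, i.e.\ $d(\phi^{i}x,\phi^{i}y)\leq 1/2^{m}$ for all $i\geq 0$, which by the definition of the metric on $\mathcal{A}^{\mathbb{Z}^{d}}$ forces $(\phi^{i}x)_{W_{m}}=(\phi^{i}y)_{W_{m}}$ for all $i\geq 0$. Since $y\in LEP(\phi)$ the right-hand side is eventually periodic in $i$, hence so is the left-hand side; therefore $x\in LEP_{m}(\phi)$. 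Taking the intersection over $m$ gives $x\in LEP(\phi)$, establishing $cl_{d_{\phi}}(P_{X}(\sigma))\subset LEP(\phi)$.

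Finally, by the hypothesis $\mu(cl_{d_{\phi}}(P_{X}(\sigma)))=1$ and the monotonicity of $\mu$, we conclude $\mu(LEP(\phi))=1$, i.e.\ $(X,\phi)$ is $\mu$-LEP. There is no real obstacle; the only subtle point is remembering that $\sigma$-commutation of $\phi$ preserves the $\sigma$-period, which is what makes each $\phi$-orbit of a $\sigma$-periodic point lie in a finite set and thus eventually cyclic in $X$ itself (not merely on windows).
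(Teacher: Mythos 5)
Your proposal is correct and follows essentially the same route as the paper: first show every $\sigma$-periodic point is eventually $\phi$-periodic (because $\phi$ commutes with the shifts and hence maps the finite set of points with a given period into itself), then transfer eventual periodicity of each window to any point whose orbit ball meets $P_{X}(\sigma)$, which is exactly what membership in $cl_{d_{\phi}}(P_{X}(\sigma))$ provides. You spell out the $\sigma$-commutation step more explicitly than the paper does, but the argument is the same.
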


\begin{proof}
Using the fact that the $\phi-$image of a $\sigma-$periodic point is $\sigma
-$periodic with at most the same period one can see that any point in
$P_{X}(\sigma)$ is eventually periodic for $\phi.$ Let $O_{m}$ be an orbit
ball of size $m.$ This means that if $O_{m}\cap P_{X}(\sigma)\neq\emptyset$
and $x\in O_{m}$ then $x\in LEP_{m}(\phi).$ Hence if $\mu(cl_{d_{\phi}}%
(P_{X}(\sigma)))=1$ then $\phi$ is $\mu-$LEP.
\end{proof}

We represent the change of metric identity map by $\Gamma:(X,d)\rightarrow
(X,d_{\phi})$. A point $x\in X$ is an equicontinuity point of $\phi$ if and
only if it is a continuity point of $\Gamma.$ Hence $\phi$ is equicontinuous
if and only if $\Gamma$ is continuous.

Let $(X,\phi)$ be a CA. We denote the set of equicontinuity points with
$EQ(\phi).$

In \cite{BlanchardEQ} $\sigma-$ergodic measures that give full measure to the
equicontinuity points of a CA were studied. As a consequence of Lemma 3.1 (in
that paper) we can see that if $X$ is a 1D subshift, $(X,\phi)$ a CA$,$ and
$\mu$ a $\sigma-$ergodic probability measure on $X$ with $\mu(EQ(\phi))=1$
then $(X,\phi)$ is $\mu-LEP.$

If we assume the subshift has dense periodic points (or more generally are
dense in a set of full measure) then we obtain that result for
multidimensional subshifts.

\begin{proposition}
\label{aqui}Let $(X,\phi)$ be a CA and $\mu$ a measure such that $\mu
(cl_{d}(P_{X}(\sigma)))=1$. If $\mu(EQ(\phi))=1$ then $(X,\phi)$ is $\mu-LEP.$
\end{proposition}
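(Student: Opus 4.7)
The plan is to reduce Proposition \ref{aqui} to Lemma \ref{periodic} by upgrading $d$-density of periodic points to $d_\phi$-density on a set of full measure, using the hypothesis that $\mu$-a.e.\ point is an equicontinuity point of $\phi$.

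Set $E:=EQ(\phi)\cap cl_d(P_X(\sigma))$. Since both sets on the right have full measure, so does $E$. I would then argue directly that $E\subset cl_{d_\phi}(P_X(\sigma))$. Indeed, take $x\in E$ and pick a sequence of $\sigma$-periodic points $y_n\in P_X(\sigma)$ with $d(y_n,x)\to 0$. Recalling the observation just before the proposition, $x$ being an equicontinuity point of $\phi$ is exactly the statement that the identity map $\Gamma:(X,d)\to(X,d_\phi)$ is continuous at $x$; hence $d_\phi(y_n,x)=d_\phi(\Gamma(y_n),\Gamma(x))\to 0$, which shows $x\in cl_{d_\phi}(P_X(\sigma))$.

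Combining the two steps, $\mu(cl_{d_\phi}(P_X(\sigma)))\geq \mu(E)=1$, and Lemma \ref{periodic} then gives that $(X,\phi)$ is $\mu$-LEP, which is the desired conclusion.

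The argument is essentially a two-line reduction once one identifies equicontinuity at $x$ with continuity of $\Gamma$ at $x$; the only mild subtlety to keep in mind is that the change of metric may genuinely enlarge $d$-closures to $d_\phi$-closures, so the equicontinuity hypothesis is doing real work here, but no further obstacle arises.
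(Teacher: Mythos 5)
Your proof is correct and follows essentially the same route as the paper: both identify equicontinuity points with continuity points of $\Gamma:(X,d)\to(X,d_{\phi})$, deduce that a full-measure subset of $cl_{d}(P_{X}(\sigma))$ lies in $cl_{d_{\phi}}(P_{X}(\sigma))$, and then invoke Lemma \ref{periodic}. Your version merely spells out the sequence argument that the paper leaves implicit.
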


\begin{proof}
Since equicontinuity points have full measure, then $\Gamma$ is continuous on
a set of full measure. This means that for almost every $x\in cl_{d}%
(P_{X}(\sigma)),$ we have $x\in cl_{d_{\phi}}(P_{X}(\sigma)).$ So
$\mu(cl_{d_{\phi}}(P_{X}(\sigma)))=\mu(cl_{d}(P_{X}(\sigma)))=1.$ Using Lemma
\ref{periodic} we conclude that $(X,\phi)$ is $\mu-LEP.$
\end{proof}

\bigskip We already noted that if $X$ is a subshift with dense $\sigma
-$periodic points, and $(X,\phi)$ is an equicontinuous CA then $(X,\phi)$ is
eventually periodic. Proposition \ref{aqui} is a measure theoretic analogous result.

When $X$ is an SFT then this result is also a consequence of Proposition
\ref{1i3}.

Now we present some examples.

\begin{example}
Let $x$ be a non $\sigma-$periodic point and let $\mu$ be the delta measure
supported on $\left\{  x\right\}  .$ Then $\phi=\sigma$ is $\mu-$%
equicontinuous but not $\mu$-$LEP.$
\end{example}

\begin{definition}
A 1D CA $(X,\phi)$ has \textbf{right radius 0} if there exist $L\in\mathbb{N}$
and a function $\phi\left[  \cdot\right]  :\mathcal{A}^{L}\rightarrow
\mathcal{A},$ such that $(\phi(x))_{i}=\phi\left[  x_{-L}...x_{i-1}%
x_{i}\right]  $
\end{definition}

\begin{example}
[\cite{Gilman1}]\label{slide}Let $X=\left\{  -1,0,1\right\}  ^{\mathbb{Z}},$
and $(X,\phi)$ a radius 1 CA with right radius 0 defined as follows:
$\phi\left[  11\right]  =1,\phi\left[  10\right]  =1,\phi\left[  1-1\right]
=0,\phi\left[  a1\right]  =0$ if $a\neq1,$ $\phi\left[  ab\right]  =b$ if
$a,b\neq1.$ The reader can picture the $-1s$ and $0s$ as not moving
($\phi\left[  ab\right]  =b$ if $a,b\neq1$) and the $1s$ moving to the right
at speed one until they encounters a $-1$ and the position converts into a $0$
($\phi\left[  1-1\right]  =0$)$.$ It is easy to see that this CA does not
contain equicontinuity points. For Bernoulli measures this CA is $\mu
-$equicontinuous when $\mu(-1)>\mu(1)$ \cite{Gilman1}$.$
\end{example}

We define the set $E_{i}:=\left\{  x\in\left\{  0,1\right\}  ^{\mathbb{Z}}\mid
x_{j}=1\text{ for }0\leq j\leq i\right\}  .$

\begin{example}
\label{product1}There exists a CA on the full 2-shift with no equicontinuity
points that is $\mu-$equicontinuous for every $\sigma-$invariant measure that
satisfies $\sum_{i\geq0}\mu(E_{i})<\infty$ ( in particular every non-trivial
ergodic Markov chain).
\end{example}

\begin{proof}
On $\left\{  0,1\right\}  ^{\mathbb{Z}}$ we define $\phi(x)=y$ as
$y_{i}=x_{i-1}x_{i-2}.$

One can check that for every $i>0,$ $(\phi^{i}x)_{0}=$ $\prod\limits_{j=-i}%
^{-2i}x_{j}.$ Let $a\in\left\{  0,1\right\}  .$ If there exists $n>0$ such
that $x_{i}=a$ for $i\leq-n,$ then $(\phi^{i}x)_{0}=a$ for $i\geq2n.$ This
means that for every ball $B$ there exists $x,y\in B$ such that $O_{0}(x)\neq
O_{0}(y),$ so the CA has no equicontinuity points. Let $\overline{E}%
_{i}:=\left\{  x\mid x_{j}=1\text{ for }-2i\leq j\leq-i\right\}  .$ We have
that $\sum_{i\geq0}\mu(\overline{E}_{i})=\sum_{i\geq0}\mu(E_{i})<\infty$. By
the Borel-Cantelli Lemma we have that $\mu(\overline{E}_{i}$ infinitely
often$)=0.$ This means that the probability that $(\phi^{i}x)_{0}$ has
infinitely many ones is zero; since the same argument can be given for
$(\phi^{i}x)_{m}$ we conclude $\phi$ is $\mu-LEP$ and hence $\mu
-$equicontinuous$.$

For every non-trivial ergodic Markov chain $\mu(E_{i})$ decreases
exponentially so $\sum_{i\geq0}\mu(E_{i})<\infty.$
\end{proof}

Note that in the trivial case (i.e. when $\mu(1)=1$ or $0),$ the hypothesis is
not satisfied but we also conclude $\phi$ is $\mu-$equicontinuous.

Q: Does there exists a CA with no equicontinuity points that is $\mu
-$equicontinuous for every $\sigma-$invariant $\mu?$

For more examples of $\mu-$equicontinuous CA see \cite{TisseurEQ}.

\bigskip

The following diagrams illustrate how the different properties relate on the
topological and measure theoretical level.

\textbf{Topological}%
\[
LP\Rightarrow Equicontinuous\Rightarrow LEP
\]

\textbf{Measure theoretical}%

\[
\mu-LP\Rightarrow\mu-LEP\Rightarrow\mu-equicontinuous
\]

If $X$ is a 1D SFT and $\mu$ $\sigma-$invariant then
\[
\mu-LEP=\mu-equicontinuous
\]

\section{Limit behaviour}

\subsection{Weak convergence}

A sequence of measures $\mu_{n}$ (on $X$) \textbf{converges weakly}%
\textit{\ to }$\mu_{\infty}$ $($ denoted as $\mu_{n}\rightarrow^{w}\mu
_{\infty})$ if for every continuous function $f:X\rightarrow\mathbb{R},$%
\[
\int fd\mu_{n}\rightarrow\int fd\mu_{\infty}\text{.}%
\]

This form of convergence is called weak convergence in the Probability
literature and weak* convergence in the Functional Analysis literature.

One can study limit behaviour of dynamical systems by studying the long term
behaviour of $\phi^{n}\mu$ ($\phi\mu$ is the push-forward of the measure) or
of its Cesaro averages: $\mu_{n}^{c}:=\frac{1}{n}\sum_{i=1}^{n}\phi^{i}(\mu).$
In particular we may ask if $\phi^{n}\mu$ or $\mu_{n}^{c}$ converges weakly,
and which are the properties of the limit measure.

\begin{theorem}
[Portmanteau \cite{billingsley2009convergence} pg. 15]\bigskip We have
$\mu_{n}\rightarrow^{w}\mu_{\infty}$ if and only if for every open set $U,$
$\mu_{\infty}(U)\leq\lim\inf\mu_{n}(U)$ if and only if $\mu_{n}(E)\rightarrow
\mu_{\infty}(E)$ for every set $E$ with zero boundary measure$.$
\end{theorem}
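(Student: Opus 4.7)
My plan is to establish the three-way equivalence by a cycle of implications (i)$\Rightarrow$(ii)$\Rightarrow$(iii)$\Rightarrow$(i), where (i) is weak convergence, (ii) is the open-set inequality, and (iii) is convergence on continuity sets.

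For (i)$\Rightarrow$(ii), I would fix an open set $U$ and approximate $\mathbf{1}_U$ from below by the continuous truncations $f_k(x):=\min\bigl(1,\,k\cdot d(x,U^c)\bigr)$, which vanish off $U$ and increase pointwise to $\mathbf{1}_U$. Since $f_k\le \mathbf{1}_U$, weak convergence gives $\int f_k\,d\mu_\infty=\lim_n\int f_k\,d\mu_n\le\liminf_n\mu_n(U)$, and sending $k\to\infty$ with monotone convergence yields $\mu_\infty(U)\le\liminf_n\mu_n(U)$. Passing to complements converts (ii) into its dual $\limsup_n\mu_n(C)\le\mu_\infty(C)$ for every closed $C$, which I will use freely below.

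For (ii)$\Rightarrow$(iii), suppose $\mu_\infty(\partial E)=0$, so $\mu_\infty(\operatorname{int} E)=\mu_\infty(\overline{E})=\mu_\infty(E)$. Applying (ii) to the open set $\operatorname{int} E$ and the closed-set dual to $\overline{E}$ gives
\[
\mu_\infty(E)=\mu_\infty(\operatorname{int} E)\le\liminf_n\mu_n(\operatorname{int} E)\le\limsup_n\mu_n(\overline{E})\le\mu_\infty(\overline{E})=\mu_\infty(E),
\]
and the sandwich forces $\mu_n(E)\to\mu_\infty(E)$.

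The last implication (iii)$\Rightarrow$(i) is the main obstacle, because we must recover an integral statement from a set-theoretic one. The idea is to approximate a continuous $f\colon X\to\mathbb{R}$ by simple functions whose level sets are continuity sets for $\mu_\infty$. By continuity of $f$, the boundary $\partial\{f\le c\}$ is contained in $\{f=c\}$; since the sets $\{f=c\}$ are disjoint and $\mu_\infty$ is finite, only countably many of them can have positive $\mu_\infty$-measure. Hence for any $\varepsilon>0$ I can choose a finite partition of the range of $f$ by points $c_0<c_1<\dots<c_N$ at which $\mu_\infty\{f=c_j\}=0$, so that each slab $E_j:=\{c_{j-1}<f\le c_j\}$ has $\mu_\infty(\partial E_j)=0$ and $\mathrm{osc}(f|_{E_j})<\varepsilon$. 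Applying (iii) to each $E_j$ and comparing $\int f\,d\mu_n$ with the simple function $\sum_j c_j\mathbf{1}_{E_j}$ yields $\bigl|\int f\,d\mu_n-\int f\,d\mu_\infty\bigr|\le 2\varepsilon+o(1)$, closing the cycle. The step requiring care is the countability argument that produces enough continuity-set level sets; everything else is routine.
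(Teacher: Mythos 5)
The paper does not prove this statement; it is quoted as a classical theorem with a citation to Billingsley, so there is no internal proof to compare against. Your cycle of implications is the standard (and correct) textbook argument --- the Lipschitz minorants $\min\bigl(1,k\,d(x,U^{c})\bigr)$ for (i)$\Rightarrow$(ii), the open/closed sandwich for (ii)$\Rightarrow$(iii), and the countability argument selecting partition points $c_{j}$ with $\mu_{\infty}\{f=c_{j}\}=0$ for (iii)$\Rightarrow$(i) --- and all the ingredients it needs (probability measures for passing to complements, compactness of $X$ so that $f$ is bounded) are available in the paper's setting.
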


We will see that orbit balls form a weak convergence determining class when
the limit measure is $\phi-$ equicontinuous (Lemma \ref{weakcolumn}). Note
that even when $\mu$ is $\phi-LEP,$ the measure of the boundary of an orbit
ball is not necessarily zero. For example, one can check that the orbit balls
of Example\ \ref{slide} are each contained in their own boundary.

\begin{lemma}
\label{weakcolumn}Let $(X,\phi)$ be a CA, $\mu_{n}$ be a sequence of measures,
and $\mu_{\infty}$ a $\phi-$equicontinuous measure. If for every orbit ball
$A$ we have that $\mu_{n}(A)\rightarrow\mu_{\infty}(A)$ then $\mu
_{n}\rightarrow^{w}\mu_{\infty}$. Also, if $\mu$ and $\mu^{\prime}$ are
$\phi-$equicontinuous and $\mu(A)=\mu^{\prime}(A)$ for every orbit ball $A$
then $\mu=\mu^{\prime}.$
\end{lemma}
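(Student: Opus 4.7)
The plan is to reduce weak convergence to convergence on balls $B_n(y)$ and then, via the $\phi$-equicontinuity of $\mu_{\infty}$, to convergence on orbit balls. Since $X$ is a subshift, each ball $B_n(y)$ is clopen and every continuous function is uniformly approximable by finite linear combinations of indicators of balls; hence it is enough to show $\mu_n(B) \to \mu_{\infty}(B)$ for every ball $B$ (after which the Portmanteau characterization of weak convergence applies).

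Fix $\varepsilon > 0$ and a ball $B = B_n(y)$. By Theorem \ref{3i1}(2), choose a compact $M \subset X$ with $\mu_{\infty}(M) > 1 - \varepsilon$ on which $\phi|_M$ is equicontinuous. Uniform equicontinuity then yields $n_0 \geq n$ such that $B_{n_0}(x) \cap M \subset O_n(x)$ for every $x \in M$. Cover the compact set $M \cap B$ by finitely many balls $B_{n_0}(x_1), \dots, B_{n_0}(x_k)$ with $x_i \in M \cap B$, and set $A = \bigsqcup_i O_n(x_i)$, keeping only pairwise distinct orbit balls (which are automatically disjoint, since orbit balls are the equivalence classes of $d_{\phi}(\cdot,\cdot) \leq 1/2^n$). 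Since each $x_i \in B$ and $O_n(x_i) \subset B_n(x_i) = B$, we obtain $A \subset B$; conversely, any $z \in M \cap B$ lies in some $B_{n_0}(x_i) \cap M \subset O_n(x_i)$, hence $M \cap B \subset A$. This yields the crucial estimate $\mu_{\infty}(B \setminus A) \leq \mu_{\infty}(X \setminus M) < \varepsilon$.

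By hypothesis $\mu_n(O_n(x_i)) \to \mu_{\infty}(O_n(x_i))$ for each of the finitely many $i$, so $\mu_n(A) \to \mu_{\infty}(A)$. Combined with $A \subset B$ this gives $\liminf_n \mu_n(B) \geq \mu_{\infty}(A) \geq \mu_{\infty}(B) - \varepsilon$. Applying the same lower bound to each of the finitely many balls that partition $X$ at level $n$ and using that $\mu_n, \mu_{\infty}$ are probability measures converts these lower bounds into matching upper bounds, so $\mu_n(B) \to \mu_{\infty}(B)$ after letting $\varepsilon \to 0$. For the uniqueness statement, take $\mu_n = \mu$ (constant sequence) and $\mu_{\infty} = \mu'$: the orbit-ball hypothesis is trivially satisfied, so $\mu \to^w \mu'$, and hence $\mu = \mu'$. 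The step I expect to be most delicate is the construction of $A$: one must extract finitely many orbit balls that lie inside $B$ while still covering $B$ up to $\varepsilon$ in $\mu_{\infty}$-measure, and this is exactly where Theorem \ref{3i1}(2) is essential, since it is equicontinuity on $M$ that makes sufficiently fine balls (intersected with $M$) sit inside orbit balls.
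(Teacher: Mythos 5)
Your proof is correct, and it reaches the conclusion by a route that differs from the paper's in its details while sharing the same engine: both arguments approximate a set from inside, up to $\varepsilon$ in $\mu_{\infty}$-measure, by a finite disjoint union of orbit balls, which is precisely where the $\phi$-equicontinuity of $\mu_{\infty}$ enters through Theorem \ref{3i1}. The paper uses condition (3) of that theorem (a $d_{\phi}$-separable set $X'$ of full measure) to exhaust an arbitrary open set $U$ by countably many orbit balls contained in $U$, extracts a finite subfamily, and concludes via the $\liminf$ form of the Portmanteau theorem; you instead use condition (2) (a compact $M$ with $\mu_{\infty}(M)>1-\varepsilon$ on which $\{\phi^{i}\}$ is uniformly equicontinuous) to trap $M\cap B$ inside finitely many distinct orbit balls of level $n$ sitting inside the clopen ball $B$, and then upgrade the resulting one-sided bounds to two-sided convergence on balls by the finite-partition/complement trick. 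Your version is somewhat more self-contained --- the paper's passage from $d_{\phi}$-separability of $X'$ to a countable cover of $U\cap X'$ by orbit balls contained in $U$ is left implicit --- at the cost of treating only clopen balls rather than general open sets, which suffices because indicators of balls uniformly approximate continuous functions on a subshift (your parenthetical appeal to Portmanteau at that point is unnecessary, since the approximation argument already yields weak convergence directly). For uniqueness you also diverge: the paper checks that $\mu$ and $\mu'$ agree on a family closed under finite intersections generating the Borel $\sigma$-algebra restricted to $X'$, whereas you apply the first half to the constant sequence $\mu_{n}=\mu$ with limit $\mu'$ and invoke uniqueness of weak limits of constant sequences; both work, and yours in fact only uses the $\phi$-equicontinuity of one of the two measures.
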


\begin{proof}
If we have two orbit balls $O$ and $O^{\prime},$ then either $O\cap O^{\prime
}=\emptyset$ or one is contained in the other. This implies we have
convergence for finite unions of orbit balls (since they can be written as
unions of disjoint orbit balls). Let $U$ be an open set. We have that $U$ is
the countable union of balls. From Theorem \ref{3i1} we know there exists a
$d_{\phi}-$separable set $X^{\prime}$ such that $\mu_{\infty}(X^{\prime})=1$ .
This means that for every $\delta>0$ there exist a finite number of orbit
balls $O_{i}$ such that $\cup_{i=1}^{N}O_{i}\subset U$ and $\mu_{\infty
}(U)\leq\mu_{\infty}(\cup_{i=1}^{N}O_{i})+\delta.$ We have that
\[
\mu_{\infty}(U)-\delta\leq\mu_{\infty}(\cup_{i=1}^{N}O_{i})=\lim\mu_{n}%
(\cup_{i=1}^{N}O_{i})\leq\lim\inf\mu_{n}(U),
\]
and hence%
\[
\mu_{\infty}(U)\leq\lim\inf\mu_{n}(U).
\]

Therefore $\mu_{n}\rightarrow^{w}\mu_{\infty}.$

Now suppose $\mu(A)=\mu^{\prime}(A)$ for every orbit ball $A$. Let $X^{\prime
}$ be the $d_{\phi}-$separable set with $\mu(X^{\prime})=1$ This means
$\mu(X^{\prime})=\mu^{\prime}(X^{\prime})=1$ (that is because $X^{\prime}$ is
equal to the union of countably many orbit balls, i.e. the balls of the
topology of $d_{\phi}$). Thus $\mu$ and $\mu^{\prime}$ agree on a $\Gamma
-$system (family closed under finite intersections) that generate the Borel
sigma algebra (intersected with $X^{\prime}$)$;$ we conclude $\mu=\mu^{\prime
}.$
\end{proof}

\begin{definition}
For $x\in LEP_{m}(\phi)$ we define
\[
O_{m}^{-q}(x):=\left\{  y\mid\exists i\in\mathbb{N}\text{ s.t. }\phi
^{ip_{m}(x)+q}y\in O_{m}(x)\right\}  .
\]

\end{definition}

We denote the Cesaro average of $\phi^{i}\mu$ with $\mu_{n}^{c},$ i.e.
$\mu_{n}^{c}:=\frac{1}{n}\sum_{i=1}^{n}\phi^{i}(\mu).$

In the following proposition we show Cesaro convergence holds for orbit balls.
Note in the proof that the convergence is actually stronger than Cesaro; there
is convergence along periodic subsequences.

In some of the followin proofs we will use $Y_{\varepsilon}^{m}$; see
Definition \ref{ym}.

\begin{lemma}
\label{limlep}Let $(X,\phi)$ be a $\mu-LEP$ CA, and $m\in\mathbb{N}$. If $x\in
LP_{m}(\phi)$ then
\[
\mu_{n}^{c}(O_{m}(x))\rightarrow\frac{1}{p_{m}(x)}\sum_{q=0}^{p_{m}(x)-1}%
\mu(O_{m}^{-q}(x)).
\]

Furthermore if $x\notin LP_{m}(\phi)$ then $\mu_{n}^{c}(O_{m}(x))\rightarrow
0.$
\end{lemma}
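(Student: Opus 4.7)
My plan is to expand the Cesaro average as
\[
\mu_n^c(O_m(x))=\frac{1}{n}\sum_{i=1}^n\mu(\phi^{-i}(O_m(x))),
\]
and analyse the preimage sequence $A_i:=\phi^{-i}(O_m(x))$ according to whether $x$ is locally periodic at scale $m$. The structural fact I will lean on throughout is that when $x\in LP_m(\phi)$ with period $p:=p_m(x)$, the map $\phi^p$ sends $O_m(x)$ into itself: if $y\in O_m(x)$ then $(\phi^j\phi^p y)_{W_m}=(\phi^{j+p}x)_{W_m}=(\phi^j x)_{W_m}$ for every $j\geq 0$ by local periodicity, so $\phi^p y\in O_m(x)$.

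For the first assertion, the inclusion $\phi^p(O_m(x))\subseteq O_m(x)$ immediately gives $A_{kp+q}\subseteq A_{(k+1)p+q}$, so for each residue $q\in\{0,\dots,p-1\}$ the sequence $(A_{kp+q})_{k\geq 0}$ is monotone increasing with union $O_m^{-q}(x)$. Continuity of $\mu$ from below then yields $\mu(A_{kp+q})\to\mu(O_m^{-q}(x))$ as $k\to\infty$. Splitting the Cesaro sum by residues mod $p$ and applying the elementary fact that a sequence whose $p$-th subsequences each converge has Cesaro averages converging to the mean of the $p$ limits gives
\[
\mu_n^c(O_m(x))\longrightarrow\frac{1}{p}\sum_{q=0}^{p-1}\mu(O_m^{-q}(x)),
\]
as required. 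In fact this argument gives the stronger conclusion that $\mu_{kp}^c(O_m(x))$ already converges along the subsequence $n=kp$, which is what the preamble to the lemma alludes to.

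For the second assertion I will show that if $x\notin LP_m(\phi)$ then the sets $\{A_i\}_{i\geq 0}$ are pairwise disjoint, after which $\sum_i\mu(A_i)\leq 1$ forces $\mu_n^c(O_m(x))\leq 1/n\to 0$. Suppose, for a contradiction, that $y\in A_i\cap A_j$ with $i<j$. Then $(\phi^{i+l}y)_{W_m}=(\phi^l x)_{W_m}$ and $(\phi^{j+l}y)_{W_m}=(\phi^l x)_{W_m}$ for all $l\geq 0$; substituting $l\mapsto l+(j-i)$ in the first and comparing with the second forces $(\phi^{(j-i)+l}x)_{W_m}=(\phi^l x)_{W_m}$ for every $l\geq 0$, i.e.\ $x\in LP_m(\phi)$ with period dividing $j-i$, contradicting the hypothesis.

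The only real subtlety is the bookkeeping in the first case: making the indexing $i=kp+q$ align correctly with the definition of $O_m^{-q}(x)$ (in particular recognising $O_m^{-p}(x)=O_m^{0}(x)$) and confirming that $A_{kp}$ does converge upward to $O_m^{-0}(x)$ despite the convention that $0\in\mathbb{N}$ or not. Everything else reduces to monotone convergence of measures and the standard Cesaro-by-residues argument; the $\mu$-LEP hypothesis is not actually invoked in the proof itself, but it is the natural ambient setting making the conclusion quantitatively useful.
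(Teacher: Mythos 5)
Your proof of the first assertion is correct and is essentially the paper's argument: you establish $\phi^{p}(O_{m}(x))\subseteq O_{m}(x)$, deduce that the preimages $\phi^{-(kp+q)}(O_{m}(x))$ increase in $k$ with union $O_{m}^{-q}(x)$, apply continuity of $\mu$ from below, and average over residues. Your proof of the second assertion, however, takes a genuinely different and more elementary route. The paper argues that for $N$ larger than the preperiod bound $pp_{\varepsilon}^{m}$ the set $\phi^{-N}(O_{m}(x))$ is disjoint from $Y_{\varepsilon}^{m}$, so its measure is at most $\varepsilon$; this crucially uses the $\mu$-$LEP$ hypothesis through $\mu(Y_{\varepsilon}^{m})>1-\varepsilon$. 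You instead show that the preimages $\phi^{-i}(O_{m}(x))$ are pairwise disjoint whenever $x\notin LP_{m}(\phi)$ --- if $\phi^{i}y$ and $\phi^{j}y$ both lie in $O_{m}(x)$ with $i<j$, then the $W_{m}$-column of $x$ is forced to be periodic with period dividing $j-i$ --- so the Cesaro average is bounded by $1/n$. This is a clean argument; it yields a quantitative rate, and it confirms your closing observation that the $\mu$-$LEP$ hypothesis is not needed for either half of the lemma (the paper's proof of the second half does use it, so your version is strictly more general in that respect). Both arguments are valid; yours trades the measure-theoretic smallness estimate for a purely set-theoretic disjointness observation.
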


\begin{proof}
We have
\[
\cup_{n\geq0}\phi^{-p_{m}(x)n-q}(O_{m}(x))=O_{m}^{-q}(x).
\]
For every $0\leq q<p_{m}(x)$ and $n\geq0$
\[
\phi^{-p_{m}(x)n-q}(O_{m}(x))\subset\phi^{-p_{m}(x)(n+1)-q}(O_{m}(x)).
\]
This implies $\mu(\phi^{-p_{m}(x)n-q}(O_{m}(x)))$ is non-decreasing and
\begin{equation}
\lim_{n\rightarrow\infty}\mu(\phi^{-p_{m}(x)n-q}(O_{m}(x)))=\mu(O_{m}%
^{-q}(x)). \label{form}%
\end{equation}
Since we have convergence along periodic subsequences we have that
\[
\lim_{n\rightarrow\infty}\mu_{n}^{c}(O_{m}(x))=\frac{1}{p_{m}(x)}\sum
_{q=0}^{p_{m}(x)-1}\mu(O_{m}^{-q}(x)).
\]

Let $\varepsilon>0$ and $x\notin LP_{m}(\phi)$. If $np\prime>pp_{\varepsilon
}^{m},$ then%
\[
\phi^{-p^{\prime}n-s}(O_{m}(x))\cap Y_{\varepsilon}^{m}=\emptyset,
\]

so $\mu(\phi^{-p^{\prime}n-s}(O_{m(x)}(x)))<\varepsilon.$
\end{proof}

\begin{proposition}
\label{invlp}Let $(X,\phi)$ be a $\mu-LEP$ CA. If $\phi\mu=\mu$ then
$(X,\phi)$ is $\mu-LP.$
\end{proposition}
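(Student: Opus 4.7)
The plan is to combine Poincar\'e recurrence with Lemma \ref{nw}, which already identifies $LP(\phi)$ as $R(\phi)\cap LEP(\phi)$. Since we are given $\mu(LEP(\phi))=1$, it suffices to show that the assumption $\phi\mu=\mu$ forces $\mu(R(\phi))=1$; then intersecting two full-measure sets gives $\mu(LP(\phi))=1$, which is exactly the conclusion.

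First I would note that $X$ is compact metric, hence second countable, so we may fix a countable basis $\{U_k\}$ for the topology. For each $k$ with $\mu(U_k)>0$, Poincar\'e's recurrence theorem applied to the $\phi$-invariant measure $\mu$ and the Borel set $U_k$ yields a set $N_k\subset U_k$ of $\mu$-measure zero such that every $x\in U_k\setminus N_k$ satisfies $\phi^n x\in U_k$ for infinitely many $n$. Setting $N:=\bigcup_k N_k$, we get $\mu(N)=0$, and any $x\notin N$ revisits every basic open neighborhood of itself infinitely often. Since every open neighborhood of $x$ contains some $U_k\ni x$, this is precisely topological recurrence, so $X\setminus N\subset R(\phi)$ and hence $\mu(R(\phi))=1$.

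Now combine: $\mu(R(\phi)\cap LEP(\phi))=1$, and by Lemma \ref{nw} this intersection equals $LP(\phi)$. Therefore $\mu(LP(\phi))=1$, so $(X,\phi)$ is $\mu$-LP.

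There is no substantial obstacle here; the work has been front-loaded into Lemma \ref{nw}, and the invariance hypothesis is used only to invoke Poincar\'e recurrence on a second countable space. The one subtlety worth flagging in the write-up is that recurrence in the sense of Definition preceding Lemma \ref{nw} is topological (returns to every open neighborhood), so one must pass from the set-theoretic Poincar\'e statement to the topological one via a countable basis, as above.
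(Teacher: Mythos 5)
Your proof is correct and follows essentially the same route as the paper: invoke Poincar\'e recurrence under the invariance hypothesis to get $\mu(R(\phi))=1$, then intersect with $LEP(\phi)$ via Lemma \ref{nw}. The only difference is that you spell out the standard passage from measure-theoretic to topological recurrence using a countable basis, which the paper leaves implicit.
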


\begin{proof}
Using the invariance of $\mu$ and Poincare's recurrence theorem we obtain that
the set of recurrent points has full measure, i.e. $\mu(R(\phi))=1.$ By Lemma
\ref{nw} we have that
\[
\mu(LP(\phi))=\mu(LEP(\phi)\cap R(\phi))=1.
\]

\end{proof}

\begin{remark}
\label{limeas}Let $B$ be a finite union of balls (thus $B$ is compact). If
$B=\cup_{i=1}^{\infty}B_{i},$ where $\left\{  B_{i}\right\}  $ is a disjoint
family of balls, then there exists $K$ such that $B=\cup_{i=1}^{K}B_{i}.$ From
this fact we get that any premeasure on the algebra generated by the balls can
be extended to a measure on the Borel sigma-algebra.
\end{remark}

The existence of a limit measure in the following result is a natural
generalization of a result for 1D CA in \cite{BlanchardEQ} ($X$ is
multidimensional and $\phi$ may not have any equicontinuity points). Here we
also show that the limit measure is $\phi-LP.$

\begin{theorem}
\label{cesaro}\bigskip Let $(X,\phi)$ be a $\mu-LEP$ CA. The sequence of
measures $\mu_{n}^{c}$ converges weakly to a $\phi-LP$ measure $\mu_{\infty}$.
\end{theorem}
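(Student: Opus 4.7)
The plan is to build the limit measure $\mu_\infty$ by prescribing its values on orbit balls via Lemma \ref{limlep}, promote this to weak convergence using Lemma \ref{weakcolumn}, and finally obtain $\phi$-LP by combining Cesaro invariance with Proposition \ref{invlp}.

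First, define a set function on orbit balls by $\nu(O_m(x)):=\lim_{n\to\infty}\mu_n^c(O_m(x))$; Lemma \ref{limlep} guarantees existence of these limits. Since the orbit metric $d_\phi$ inherits the ultrametric property of $d$, any two orbit balls are either disjoint or nested, so $\nu$ is automatically finitely additive on the algebra they generate. The key tightness input is that $Y_\varepsilon^m$ is $\phi$-forward-invariant --- passing from $x$ to $\phi x$ preserves the period $p_m(x)$ and can only decrease the preperiod $pp_m(x)$ --- so $\phi^i\mu(Y_\varepsilon^m)\ge\mu(Y_\varepsilon^m)>1-\varepsilon$ for every $i\ge 0$, giving $\mu_n^c(Y_\varepsilon^m)>1-\varepsilon$ for all $n$. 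This uniform bound transfers to $\nu(Y_\varepsilon^m)\ge 1-\varepsilon$ and lets me extend $\nu$ to a Borel probability measure $\mu_\infty$ on $X$ with $\mu_\infty(Y_\varepsilon^m)\ge 1-\varepsilon$; letting $\varepsilon\to 0$ and intersecting over $m$ yields $\mu_\infty(LEP(\phi))=1$. Hence $\mu_\infty$ is $\mu_\infty$-LEP, and by Proposition \ref{lepeq} also $\mu_\infty$-equicontinuous, so Lemma \ref{weakcolumn} immediately promotes the orbit-ball convergence to $\mu_n^c\to^w\mu_\infty$.

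To finish, the standard Cesaro telescoping $\phi\mu_n^c-\mu_n^c=\frac{1}{n}(\phi^{n+1}\mu-\phi\mu)$ has total variation $O(1/n)$, so pairing against continuous test functions and passing to the weak limit gives $\phi\mu_\infty=\mu_\infty$. Proposition \ref{invlp} applied to the $\phi$-invariant $\mu_\infty$-LEP measure $\mu_\infty$ then upgrades it to $\mu_\infty$-LP, i.e., $\phi$-LP. The main obstacle I expect is the extension step: orbit balls are closed but generally not compact (as noted before Lemma \ref{weakcolumn}, they can sit inside their own boundary for Example \ref{slide}), so the usual premeasure-to-measure arguments via compact exhaustion do not apply directly. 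Verifying countable additivity of $\nu$ on the algebra of orbit balls --- using the Borel structure of $Y_\varepsilon^m$ together with the ultrametric hierarchy in place of a compact exhaustion --- is where the bulk of the technical work lies.
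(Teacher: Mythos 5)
Your overall architecture differs from the paper's in one decisive place, and that is exactly where your acknowledged gap sits. The paper does \emph{not} construct $\mu_{\infty}$ from its values on orbit balls. It fixes a cylinder ball $B_{m}$ (clopen, hence both compact and open), writes $B_{m}\cap Y_{\varepsilon}^{m}$ as a \emph{finite} disjoint union of orbit balls so that Lemma \ref{limlep} gives existence of $\lim_{n}\frac{1}{n}\sum_{i=1}^{n}\mu(\phi^{-i}(B_{m}\cap Y_{\varepsilon}^{m}))$ for each fixed $\varepsilon$, shows this approximates $\frac{1}{n}\sum_{i=1}^{n}\mu(\phi^{-i}(B_{m}))$ uniformly in $n$ up to $\varepsilon$, and interchanges the two limits to conclude that $\lim_{n}\mu_{n}^{c}(B_{m})$ exists for every cylinder ball. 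The extension to a Borel measure is then free (Remark \ref{limeas}): since each ball is open and finite unions of balls are compact, no ball is a countably infinite disjoint union of balls, so the premeasure on the clopen algebra is automatically countably additive; weak convergence then follows by approximating open sets from inside by finite unions of balls. Your route instead prescribes $\nu$ on orbit balls and defers the extension; but orbit balls, unlike cylinder balls, are not open (they can have empty interior, as in Example \ref{slide}), a single clopen ball can decompose into uncountably many orbit balls, and the algebra the orbit balls generate need not contain the cylinder algebra. So Remark \ref{limeas} is unavailable, and you would have to prove countable additivity and carry out the extension by hand --- for instance by showing $(X,d_{\phi})$ is complete, that sets of the form $\cap_{m}Y_{\varepsilon_{m}}^{m}$ are $d_{\phi}$-compact, and running a tightness argument in the $d_{\phi}$-topology. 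You correctly identify this as the hard step, but the proof as written does not close it, so it is a genuine gap. (A minor correction: orbit balls \emph{are} compact, being $d$-closed subsets of the compact space $X$; the obstruction is their failure to be open, not a failure of compactness.)

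The pieces surrounding the gap are sound and consistent with the paper: the forward invariance of $Y_{\varepsilon}^{m}$ giving $\mu_{n}^{c}(Y_{\varepsilon}^{m})>1-\varepsilon$ uniformly in $n$ is the same tightness input the paper uses (via $\phi^{-1}(LEP(\phi))=LEP(\phi)$ together with $Y_{\varepsilon}^{m}$ being a finite union of orbit balls); Lemma \ref{weakcolumn} does promote orbit-ball convergence to weak convergence once a $\phi$-equicontinuous candidate limit is in hand; and the Cesaro telescoping plus Proposition \ref{invlp} is exactly how the paper upgrades $\phi$-LEP to $\phi$-LP. If you want to keep your orbit-ball-first strategy, the cleanest repair is to import the paper's interchange-of-limits step to first establish $\lim_{n}\mu_{n}^{c}(B_{m})$ on cylinder balls, define $\mu_{\infty}$ there via Remark \ref{limeas}, and only afterwards verify (using Lemma \ref{limlep} and the $Y_{\varepsilon}^{m}$ tightness) that $\mu_{\infty}$ assigns the intended limiting values to orbit balls.
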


\begin{proof}
Let $B_{m}$ be a ball. For every $\varepsilon>0$ and $n\in\mathbb{N}$ we have
that
\[
\left\vert \frac{1}{n}\sum_{i=1}^{n}\mu(\phi^{-i}(B_{m}\cap Y_{\varepsilon
}^{m}))-\frac{1}{n}\sum_{i=1}^{n}\mu(\phi^{-i}(B_{m}))\right\vert \leq
n\varepsilon/n=\varepsilon.
\]
Consequently
\[
\lim_{\varepsilon\rightarrow0}\frac{1}{n}\sum_{i=1}^{n}\mu(\phi^{-i}(B_{m}\cap
Y_{\varepsilon}^{m}))=\frac{1}{n}\sum_{i=1}^{n}\mu(\phi^{-i}(B_{m}))
\]
uniformly on $n.$

On the other hand for every $\varepsilon>0$ there exists a finite set of
disjoint orbit balls $\left\{  O_{m_{k}}(x_{k})\right\}  $ such that the
$x_{k}$ are $LEP$ and $B_{m}\cap Y_{\varepsilon}^{m}=\cup_{k=1}^{k=K}O_{m_{k}%
}(x_{k})$. This implies that \
\begin{align*}
&  \frac{1}{n}\sum_{i=1}^{n}\mu(\phi^{-i}(B_{m}\cap Y_{\varepsilon}^{m}))\\
&  =\frac{1}{n}\sum_{i=1}^{n}\mu(\phi^{-i}(\cup_{k=1}^{k=K}O_{m_{k}}%
(x_{k})))\text{.}%
\end{align*}

By Lemma \ref{limlep} $\lim_{n\rightarrow\infty}\frac{1}{n}\sum_{i=1}^{n}%
\mu(\phi^{-i}(\cup_{k=1}^{k=K}O_{m_{k}}(x_{k})))$ exists. \ 

Let
\[
F(n,\varepsilon):=\frac{1}{n}\sum_{i=1}^{n}\mu(\phi^{-i}(B_{m}\cap
Y_{\varepsilon}^{m})).
\]
We have shown that
\begin{align*}
&  \lim_{n\rightarrow\infty}F(n,\varepsilon)\text{ exists, and }\\
\lim_{\varepsilon\rightarrow0}F(n,\varepsilon)  &  =\frac{1}{n}\sum_{i=1}%
^{n}\mu(\phi^{-i}(B_{m}))\text{ uniformly on }n\text{.}%
\end{align*}

Thus we obtain that $\lim_{n\rightarrow\infty}\lim_{\varepsilon\rightarrow0}$
$F(n,\varepsilon)$ exists, and%
\begin{align*}
\lim_{n\rightarrow\infty}\lim_{\varepsilon\rightarrow0}\frac{1}{n}\sum
_{i=1}^{n}\mu(\phi^{-i}(B_{m}\cap Y_{\varepsilon}^{m}))  &  =\lim
_{n\rightarrow\infty}\frac{1}{n}\sum_{i=1}^{n}\mu(\phi^{-i}(B_{m}))\\
&  =\lim_{n\rightarrow\infty}\mu_{n}^{c}(B_{m}).
\end{align*}

The proof of the previous statement is common in analysis (see for example
Theorem 1 in \cite{kadelburg2005interchanging}).

We define $\mu_{\infty}$ as the measure that satisfies $\mu_{\infty}%
(B_{m}):=\lim_{n\rightarrow\infty}\mu_{n}^{c}(B_{m})$ (see Remark
\ref{limeas}) for every ball $B_{m}$. Every open set $U$ can be approximated
by a finite disjoint union of balls$.$ This implies $\mu_{\infty}(U)\leq
\lim\inf\mu_{n}^{c}(U),$ and hence $\mu_{n}^{c}\rightarrow^{w}\mu_{\infty}$.

Since $\phi^{-1}(LEP(\phi))=LEP(\phi)$ we have that $\mu_{n}^{c}$ is
$\phi-LEP.$ For every $m\in\mathbb{N}$ and $\varepsilon>0$ we have that
$Y_{\varepsilon}^{m}$ is a finite union of orbit balls and hence.
\[
\mu_{\infty}(Y_{\varepsilon}^{m})=\lim\mu_{n}^{c}(Y_{\varepsilon}^{m}%
)\geq1-\varepsilon.
\]
Since $Y_{\varepsilon}^{m}\subset LEP(\phi)$ we obtain that $\mu_{\infty}$ is
$\phi-LEP.$ Considering that $\phi\mu_{\infty}=\mu_{\infty}$ and Proposition
\ref{invlp} we obtain that $(X,\phi)$ is $\mu-LP.$
\end{proof}

There is a more general definition of $\mu-LEP$ for topological dynamical
systems (see \cite{mueqtds}). It is possible to check that the previous result
holds for topological dynamical systems on zero dimensional spaces.

\subsection{Behaviour of $\mu_{\infty}$}

In this section we prove the main results of the paper. In this section we
will use $\mu_{\infty}$ to denote the weak limit of $\mu_{n}^{c}.$

\begin{proposition}
\label{aprox}Let $(X,\phi)$ be a $\mu-LEP$ CA. Then $\phi^{n}\mu
\rightarrow^{w}\mu_{\infty}$ if and only if $\phi^{n}\mu(O)\rightarrow
\mu_{\infty}(O)$ for all orbit balls $O,$ and $\mu_{n}^{c}$ $\rightarrow
^{w}\mu_{\infty}$ if and only if $\mu_{n}^{c}(O)\rightarrow\mu_{\infty}(O)$
for all orbit balls $O.$
\end{proposition}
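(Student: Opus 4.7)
The plan is to combine Lemma \ref{weakcolumn} with the Portmanteau theorem, using the fact that the limit $\mu_\infty$ supplied by Theorem \ref{cesaro} is $\phi$-LP and hence $\phi$-equicontinuous by Proposition \ref{lepeq}. The same argument will work uniformly for both $\mu_n = \phi^n\mu$ and $\mu_n = \mu_n^c$, so I would write the proof once with $\mu_n$ standing for either sequence.

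The ``if'' direction is immediate: the hypothesis $\mu_n(O) \to \mu_\infty(O)$ on every orbit ball $O$, together with the $\phi$-equicontinuity of $\mu_\infty$, yields $\mu_n \to^w \mu_\infty$ directly from Lemma \ref{weakcolumn}. For the ``only if'' direction I would assume $\mu_n \to^w \mu_\infty$ and, by the Portmanteau theorem, reduce matters to showing $\mu_\infty(\partial O_m(x)) = 0$ for every orbit ball $O_m(x)$. I would split this into two cases according to whether $x \in LP_m(\phi)$.

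If $x \in LP_m(\phi)$, then $(\phi^i x)_{W_m}$ is periodic with period $p_m(x)$, so
\[
O_m(x) = \bigcap_{i=0}^{p_m(x)-1} \phi^{-i}(B_m(\phi^i x))
\]
is a finite intersection of clopen sets, hence clopen in the Cantor topology, and $\partial O_m(x) = \emptyset$. If $x \notin LP_m(\phi)$, then any $y \in O_m(x) \cap LP_m(\phi)$ would satisfy $(\phi^i y)_{W_m} = (\phi^i x)_{W_m}$ and so force the latter sequence to be periodic, contradicting $x \notin LP_m(\phi)$; hence $O_m(x) \subset X \setminus LP_m(\phi)$ and, since $\mu_\infty$ is $\phi$-LP, $\mu_\infty(O_m(x)) = 0$. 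Because $O_m(x)$ is closed, $\partial O_m(x) \subset O_m(x)$ also has zero $\mu_\infty$-measure.

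The argument hinges entirely on the rigidity observation that membership in $O_m(x)$ copies the entire sequence of orbit windows $(\phi^i x)_{W_m}$; this rigidity is what makes orbit balls clopen precisely at LP points and confines them to the complement of $LP_m(\phi)$ elsewhere, which is exactly what Portmanteau needs. Beyond this two-case bookkeeping I do not anticipate any real obstacle.
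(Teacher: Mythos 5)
Your ``if'' direction is fine and coincides with the paper's: Lemma \ref{weakcolumn} applies because $\mu_\infty$ is $\phi$-LP by Theorem \ref{cesaro}, hence $\phi$-equicontinuous. The ``only if'' direction, however, rests on a false claim. For $x\in LP_m(\phi)$ you assert
\[
O_m(x)=\bigcap_{i=0}^{p_m(x)-1}\phi^{-i}\bigl(B_m(\phi^i x)\bigr),
\]
so that $O_m(x)$ is clopen. The inclusion $\subset$ holds, but the reverse does not: a point $y$ in the finite intersection only has its first $p_m(x)$ windows $(\phi^i y)_{W_m}$ pinned to those of $x$; the periodicity of the sequence $(\phi^i x)_{W_m}$ says nothing about $(\phi^i y)_{W_m}$ for $i\geq p_m(x)$, so $y$ may leave $O_m(x)$ at a later time. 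The paper warns about exactly this just before Lemma \ref{weakcolumn}: in Example \ref{slide} every orbit ball is contained in its own boundary, hence has empty interior and is certainly not clopen, even though that CA is $\mu$-LEP for suitable Bernoulli $\mu$ and the relevant base points lie in $LP_m(\phi)$ (e.g.\ the fixed point $(-1)^\infty$, whose orbit ball can be escaped by planting enough $1$s arbitrarily far to the left). Since $\mu_\infty$ gives positive mass to some such orbit ball $O$, one gets $\mu_\infty(\partial O)\geq\mu_\infty(O)>0$, so your Portmanteau reduction to ``orbit balls have $\mu_\infty$-null boundary'' cannot be carried out. (Your second case, $x\notin LP_m(\phi)$, is correct.)

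The paper circumvents this by approximating $O_m(x)=\bigcap_{i\in\mathbb{N}}\phi^{-i}G_i$ from outside by the genuinely clopen finite intersections $C_k=\bigcap_{i=1}^{k}\phi^{-i}G_i$, applying Portmanteau to each $C_k$, and then controlling $\left\vert \phi^{n}\mu(C_k)-\phi^{n}\mu(O_m(x))\right\vert \leq\varepsilon$ uniformly in large $n$ via the sets $Y_{\varepsilon}^{m}$ of Definition \ref{ym}: for points whose period and preperiod are bounded by $p_{\varepsilon}^{m}$ and $pp_{\varepsilon}^{m}$, knowing $k\geq 2p_{\varepsilon}^{m}$ consecutive windows after time $n\geq pp_{\varepsilon}^{m}$ \emph{does} determine all later windows, so $\phi^{-n}(C_k)$ and $\phi^{-n}(O_m(x))$ agree on $Y_{\varepsilon}^{m}$, a set of measure $>1-\varepsilon$. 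Some such uniform-in-$n$ approximation in measure is indispensable here; the purely topological zero-boundary route is blocked.
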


\begin{proof}
Assume that $\phi^{n}\mu\rightarrow^{w}\mu_{\infty}.$

Let $m\in\mathbb{N}$ and $x\in LEP(\phi).$ Take $\varepsilon>0$ so that
$p_{\varepsilon}^{m}>$ $p_{m}(x).$

Let $O_{m}(x)=\cap_{i\in\mathbb{N}}\phi^{-i}G_{i},$ where every $G_{i}$ is a
ball defined by the $ith$ row of the spacetime diagram of $O_{m}(x)$. There
exists $k_{1}$ such that
\[
\left\vert \mu_{\infty}(\cap_{i=1}^{k}\phi^{-i}G_{i})-\mu_{\infty}%
(O_{m}(x))\right\vert \leq\varepsilon\text{ for }k\geq k_{1}.
\]

Fix $k\geq2p_{\varepsilon}^{m},k_{1}.$ If $n\geq pp_{\varepsilon}^{m}$
$\ $then
\[
\phi^{-n}(\cap_{i=1}^{k}\phi^{-i}G_{i})\cap Y_{\varepsilon}^{m}=\phi
^{-n}(O_{m}(x))\cap Y_{\varepsilon}^{m}.
\]
Since $\mu(Y_{\varepsilon}^{m})>1-\varepsilon$ we obtain%
\[
\left\vert \phi^{n}\mu(\cap_{i=1}^{k}G_{i})-\phi^{n}\mu(O_{m}(x))\right\vert
\leq\varepsilon\text{ for }n\geq pp_{\varepsilon}^{m}.
\]

Since $\cap_{i=1}^{k}\phi^{-i}G_{i}$ has no boundary, there exists $N$ such
that%
\[
\left\vert \phi^{n}\mu(\cap_{i=1}^{k}\phi^{-i}G_{i})-\mu_{\infty}(\cap
_{i=1}^{k}\phi^{-i}G_{i})\right\vert \leq\varepsilon\text{ for }n\geq N.
\]

Using the inequalities we obtain%
\[
\left\vert \phi^{n}\mu(O_{m}(x))-\mu_{\infty}(O_{m}(x))\right\vert
\leq3\varepsilon\text{ for }n\geq N,pp_{\varepsilon}^{m}.
\]

Hence $\phi^{n}\mu(O_{m}(x))\rightarrow\mu_{\infty}(O_{m}(x)).$

The other direction is a corollary of Lemma \ref{weakcolumn}.

The proof for $\mu_{n}^{c}$ is analogous.
\end{proof}

\begin{definition}
For $a\in\mathbb{R}$ and $E\subset X$ Borel$,$ we define
\[
A_{a}^{E}:=\left\{  y:\lim_{n\rightarrow\infty}\frac{1}{\left\vert
W_{n}\right\vert }\sum_{i\in W_{n}}1_{E}(\sigma^{i}(y))=a\right\}  ,
\]
where $W_{n}=\left[  -n,n\right]  ^{d}.$
\end{definition}

Note that if $\mu$ is $\sigma-$ergodic then by the pointwise ergodic theorem
\[
\mu(A_{a}^{E})=\left\{
\begin{array}
[c]{cc}%
1 & \text{if }a=\mu(E)\\
0 & otherwise
\end{array}
\right.  .
\]

\begin{lemma}
\label{avrg}Let $\mu$ be a $\sigma-$ergodic measure, $(X,\phi)$ a $\mu-LEP$
CA$,$ $m\in\mathbb{N}$, and $x\in LP_{m}(\phi).$ If for every $0\leq
q<p_{m}(x)$ there exists $N_{q}\in\mathbb{N}$ such that%
\[
\mu(\phi^{-p_{m}(x)n-q}(O_{m}(x)))=\mu(O_{m}^{-q}(x))\text{ for all }n\geq
N_{q}%
\]
then
\[
\mu_{n}^{c}(A_{a}^{O_{m}(x)})\rightarrow\mu_{\infty}(A_{a}^{O_{m}(x)}%
)=\frac{1}{p_{m}(x)}\sum_{q=0}^{p_{m}(x)-1}\mu(A_{a}^{O_{m}^{-q}(x)}).
\]

\end{lemma}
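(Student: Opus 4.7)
The plan is to reduce the Cesaro behavior of $\mu_n^c$ on $A_a^{O_m(x)}$ to the eventually-periodic behavior of $k\mapsto\mu(\phi^{-k}O_m(x))$ supplied by the hypothesis, using the $\sigma$-ergodicity of $\mu$ and the commutation of $\sigma$ with $\phi$.

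First I would establish the set-theoretic identity
$$\phi^{-k}\bigl(A_a^{O_m(x)}\bigr)=A_a^{\phi^{-k}O_m(x)},$$
which follows from $\sigma^i\phi^k=\phi^k\sigma^i$: indeed $1_{O_m(x)}(\sigma^i\phi^k y)=1_{\phi^{-k}O_m(x)}(\sigma^i y)$, and taking the $\sigma$-ergodic average preserves this. Pushing forward yields $\phi^k\mu(A_a^{O_m(x)})=\mu(A_a^{\phi^{-k}O_m(x)})$.

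Next I would invoke the hypothesis: the sequence $(\phi^{-p_m(x)n-q}O_m(x))_n$ is nested and increases to $O_m^{-q}(x)$, and its $\mu$-measure stabilizes at $\mu(O_m^{-q}(x))$ for $n\geq N_q$, so the set-theoretic difference is $\mu$-null. Because $\mu$ is $\sigma$-invariant, every shift of this $\mu$-null set is $\mu$-null, and modifying $E$ by a $\mu$-null set alters $A_a^E$ only on a $\mu$-null set (only countably many $i\in\mathbb{Z}^d$ are involved). Consequently
$$\mu\bigl(A_a^{\phi^{-k}O_m(x)}\bigr)=\mu\bigl(A_a^{O_m^{-q}(x)}\bigr)\qquad\text{whenever }k=p_m(x)j+q,\ j\geq N_q.$$
Thus $k\mapsto\phi^k\mu(A_a^{O_m(x)})$ is eventually periodic of period $p_m(x)$, and a standard Cesaro computation gives
$$\mu_n^c\bigl(A_a^{O_m(x)}\bigr)\;\longrightarrow\;\frac{1}{p_m(x)}\sum_{q=0}^{p_m(x)-1}\mu\bigl(A_a^{O_m^{-q}(x)}\bigr).$$

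The main obstacle is identifying this Cesaro limit with $\mu_\infty(A_a^{O_m(x)})$. The set $A_a^{O_m(x)}$ is Borel but not obviously a $\mu_\infty$-continuity set, so Portmanteau applied to the weak convergence $\mu_n^c\to^w\mu_\infty$ of Theorem \ref{cesaro} does not immediately deliver $\mu_n^c(A_a^{O_m(x)})\to\mu_\infty(A_a^{O_m(x)})$. My plan is to squeeze $A_a^{O_m(x)}$ between inner and outer Borel approximations built from finitely many preimages $\phi^{-j}O_m(x)$ which, being finite unions of orbit balls inside $Y_\varepsilon^m$, are $\mu_\infty$-continuity sets in the sense of Lemma \ref{weakcolumn}, and then let $\varepsilon\to 0$; alternatively one can use the $\sigma$-ergodic decomposition of the $\sigma$-invariant measure $\mu_\infty$ and show that its ergodic components assign $O_m(x)$ the values $\mu(O_m^{-q}(x))$ with equal weights $1/p_m(x)$. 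Either route closes the gap.
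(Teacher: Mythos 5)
Your derivation of the Cesaro limit is correct but takes a genuinely different route from the paper's. The paper notes that $\phi^{k}\mu$ is $\sigma$-ergodic whenever $\mu$ is (because $\phi$ commutes with the shifts), so the pointwise ergodic theorem evaluates $\phi^{p_m(x)n+q}\mu(A_a^{O_m(x)})$ as $1$ if $a=\phi^{p_m(x)n+q}\mu(O_m(x))$ and $0$ otherwise; the hypothesis converts this condition into $a=\mu(O_m^{-q}(x))$, which by the same $0$--$1$ law equals $\mu(A_a^{O_m^{-q}(x)})$. You instead transport the set itself through the identity $\phi^{-k}A_a^{E}=A_a^{\phi^{-k}E}$ and then replace $\phi^{-k}O_m(x)$ by $O_m^{-q}(x)$ up to a $\mu$-null set, using the monotonicity established in Lemma \ref{limlep}, the hypothesis, and the $\sigma$-invariance of $\mu$. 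Both arguments produce the same eventual periodicity of $k\mapsto\phi^{k}\mu(A_a^{O_m(x)})$ and hence the same Cesaro limit; the paper's is shorter because the $0$--$1$ law does all the work, while yours uses only $\sigma$-invariance at this stage and so does not need each $\mu(A_a^{O_m^{-q}(x)})$ to be $0$ or $1$.

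Concerning the identification of this limit with $\mu_\infty(A_a^{O_m(x)})$: you are right to flag it, since $A_a^{O_m(x)}$ is a $\sigma$-invariant Borel set and weak convergence alone does not give $\mu_n^c(A_a^{O_m(x)})\to\mu_\infty(A_a^{O_m(x)})$. The paper's proof simply asserts this equality, so you are not missing a step the paper supplies. Of your two proposed repairs, the second is the one that works: the finite intersections $\bigcap_{j}\sigma^{-i_j}O_m(x)$ are finite unions of orbit balls, so $\mu_\infty\bigl(\bigcap_{j}\sigma^{-i_j}O_m(x)\bigr)=\frac{1}{p_m(x)}\sum_{q}\mu\bigl(\bigcap_{j}\sigma^{-i_j}O_m^{-q}(x)\bigr)$; this identifies all moments of the Birkhoff averages of $1_{O_m(x)}$ under $\mu_\infty$ with the uniform mixture of the corresponding moments of $1_{O_m^{-q}(x)}$ under $\mu$, and the bounded moment problem then yields the stated formula for $\mu_\infty(A_a^{O_m(x)})$. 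The first repair is doubtful: a single finite union of orbit balls would have to approximate $A_a^{O_m(x)}$ well for $\mu_n^c$ uniformly in $n$, which amounts to separating the subsequential limits of $\phi^{p_m(x)n+q}\mu$ across the residues $q$, and nothing in the weak convergence of Theorem \ref{cesaro} guarantees this.
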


\begin{proof}
By hypothesis we have that for $n\geq N_{q}$
\[
\phi^{p_{m}(x)n+q}\mu(O_{m}(x))=\mu(O_{m}^{-q}(x)).
\]
Note that since $\mu$ is $\sigma-$ergodic then $\phi^{n}\mu$ is $\sigma
-$ergodic for every $n\geq1.$ Let $0\leq q<p_{m}(x).$

If $n\geq N_{q}$ then%
\begin{align*}
\phi^{p_{m}(x)n+q}\mu(A_{a}^{O_{m}(x)})  &  =\left\{
\begin{array}
[c]{cc}%
1 & \text{if }a=\phi^{p_{m}(x)n+q}\mu(O_{m}(x))\\
0 & otherwise
\end{array}
\right. \\
&  =\left\{
\begin{array}
[c]{cc}%
1 & \text{if }a=\mu(O_{m}^{-q}(x))\\
0 & \text{ \ \ \ \ \ \ \ \ \ \ \ }otherwise
\end{array}
\right. \\
&  =\mu(A_{a}^{O_{m}^{-q}(x)}).
\end{align*}

This implies we have convergence along periodic subsequences, thus%
\[
\mu_{n}^{c}(A_{a}^{O_{m}(x)})\rightarrow\mu_{\infty}(A_{a}^{O_{m}(x)}%
)=\frac{1}{p_{m}(x)}\sum_{q=0}^{p_{m}(x)-1}\mu(A_{a}^{O_{m}^{-q}(x)}).
\]

\end{proof}

Using the fact that $\mu(\phi^{-p_{m}(x)n-q}(O_{m}(x)))\rightarrow\mu
(O_{m}^{-q}(x))$ one can check that one of the hypotheses of this result is
always satisfied if we assume the CA is $\mu-LP.$

\begin{lemma}
\label{remarco}\bigskip\ Let $(X,\phi)$ be a $\mu-LP$ CA$,$ $m\in\mathbb{N}$,
and $x\in LP_{m}(\phi).$ Then for every $0\leq q<p_{m}(x)$ we have%
\[
\mu(\phi^{-p_{m}(x)i-q}(O_{m}(x)))=\mu(O_{m}^{-q}(x))\text{ for all }%
i\in\mathbb{N}.
\]

\end{lemma}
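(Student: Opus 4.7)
The plan is to combine the monotonicity already established in Lemma \ref{limlep} with forward invariance of $LP(\phi)$ under $\phi$, reducing the claim to a pointwise statement about locally periodic points. By Lemma \ref{limlep}, the sequence $\mu(\phi^{-(p_m(x)i+q)}(O_m(x)))$ is non-decreasing in $i$ and tends to $\mu(O_m^{-q}(x))$, so it suffices to show that each inclusion $\phi^{-q}(O_m(x)) \subset \phi^{-(p_m(x)i+q)}(O_m(x))$ is in fact an equality modulo a $\mu$-null set; constancy plus convergence will then force every term to equal $\mu(O_m^{-q}(x))$.

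The key step is the following pointwise claim: for every $w\in LP(\phi)$ and every $k\geq 1$, if $\phi^{kp_m(x)}w\in O_m(x)$ then $w\in O_m(x)$. Setting $S:=kp_m(x)$, the hypothesis yields $(\phi^{S+j}w)_{W_m}=(\phi^{j}x)_{W_m}$ for every $j\geq 0$, so as a sequence in $j$ the tail $((\phi^{S+j}w)_{W_m})_{j\geq 0}$ has minimal period $p_m(x)$. This same tail also inherits the period $p_m(w)$ coming from $w\in LP_m(\phi)$, and minimality forces $p_m(x)\mid p_m(w)$. Given any $j\geq 0$, I would choose $\ell$ large enough that $j+\ell p_m(w)\geq S$ and chain
\[
(\phi^{j}w)_{W_m}=(\phi^{j+\ell p_m(w)}w)_{W_m}=(\phi^{j+\ell p_m(w)-S}x)_{W_m}=(\phi^{j}x)_{W_m},
\]
using periodicity of the column of $w$, the agreement on the tail, and the fact that $\ell p_m(w)-S$ is a multiple of $p_m(x)$. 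This gives $w\in O_m(x)$.

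Applying the pointwise claim with $w=\phi^{q}y$ and $k=i$, the exceptional set $\phi^{-(p_m(x)i+q)}(O_m(x))\setminus \phi^{-q}(O_m(x))$ is contained in $\phi^{-q}(X\setminus LP(\phi))$. Because periodicity of the columns is preserved under $\phi$, we have $\phi(LP(\phi))\subset LP(\phi)$, so $LP(\phi)\subset \phi^{-q}(LP(\phi))$ and $\mu(\phi^{-q}(X\setminus LP(\phi)))\leq\mu(X\setminus LP(\phi))=0$ by the $\mu$-$LP$ hypothesis. Thus the exceptional set is $\mu$-null and the inclusion is an equality in measure for every $i\geq 0$.

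The main obstacle is the pointwise claim, and more precisely the divisibility $p_m(x)\mid p_m(w)$, which requires carefully tracking which sequence carries which \emph{minimal} period rather than merely \emph{a} period. Once this arithmetic fact is in place, the rest is a short diagram-chase through the definitions of $O_m(x)$, $LP_m(\phi)$, and $O_m^{-q}(x)$.
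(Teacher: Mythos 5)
Your proposal is correct and follows essentially the same route as the paper: the paper's one-line proof simply asserts that $LP(\phi)\cap\phi^{-p_{m}(x)i-q}(O_{m}(x))=LP(\phi)\cap O_{m}^{-q}(x)$ for all $i$ and invokes $\mu(LP(\phi))=1$, and your pointwise claim (via the divisibility $p_{m}(x)\mid p_{m}(w)$ and forward invariance of $LP(\phi)$) is precisely the justification of that set identity, combined with the monotonicity and convergence already recorded in Lemma \ref{limlep}. You supply the details the paper leaves implicit, but the underlying idea is identical.
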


\begin{proof}
This comes from the fact that $LP(\phi)\cap\phi^{-p_{m}(x)i-q}(O_{m}%
(x))=LP(\phi)\cap O_{m}^{-q}(x)$ for all $i\in\mathbb{N}.$
\end{proof}

\begin{theorem}
\label{mme copy(1)}Let $\mu$ be a $\sigma-$ergodic measure and $(X,\phi)$ a
$\mu-LP$ CA. Then $\mu_{\infty}$ is $\sigma-$ergodic if and only if $\mu$ is
$\phi-$invariant$.$
\end{theorem}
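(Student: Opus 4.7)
The plan is to prove the two implications separately. The reverse direction is essentially formal: if $\phi\mu = \mu$, then $\phi^i\mu = \mu$ for every $i$, so $\mu_n^c = \mu$ for every $n$, and hence $\mu_\infty = \mu$ is $\sigma$-ergodic by hypothesis.

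For the forward direction I would assume $\mu_\infty$ is $\sigma$-ergodic and aim to show $\mu = \phi\mu$. Since $(X,\phi)$ is $\mu$-LP it is in particular $\mu$-LEP, so by Proposition \ref{lepeq} $\mu$ is $\phi$-equicontinuous. Because $\phi(LP(\phi)) \subset LP(\phi)$, one also has $\phi\mu(LP(\phi)) = 1$, so $\phi\mu$ is $\phi$-LP and hence $\phi$-equicontinuous as well. By the uniqueness part of Lemma \ref{weakcolumn}, the goal reduces to showing $\mu(O_m(x)) = \phi\mu(O_m(x))$ for every orbit ball $O_m(x)$.

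For $x \notin LP_m(\phi)$, the defining identity $(\phi^i y)_{W_m} = (\phi^i x)_{W_m}$ of $O_m(x)$ forbids any $y \in O_m(x)$ from being in $LP_m(\phi)$, so $O_m(x) \cap LP(\phi) = \emptyset$ and both measures assign zero. For $x \in LP_m(\phi)$, I would use Lemma \ref{remarco} to verify the hypothesis of Lemma \ref{avrg} with $N_q = 0$, yielding
\[
\mu_\infty(A_a^{O_m(x)}) = \frac{1}{p_m(x)} \sum_{q=0}^{p_m(x)-1} \mu(A_a^{O_m^{-q}(x)}).
\]
Since $\mu$ is $\sigma$-ergodic, each summand is $0$ or $1$, so the right-hand side equals the fraction of indices $q$ with $\mu(O_m^{-q}(x)) = a$. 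But $\sigma$-ergodicity of $\mu_\infty$ forces the left-hand side to lie in $\{0,1\}$ for every $a$, which can hold only if the numbers $\mu(O_m^{-q}(x))$ for $0 \leq q < p_m(x)$ are all equal. Rewriting via Lemma \ref{remarco} at $i = 0$ gives $\mu(\phi^{-q}(O_m(x))) = \mu(O_m^{-q}(x))$, and comparing $q = 0$ with $q = 1$ yields $\mu(O_m(x)) = \phi\mu(O_m(x))$.

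The main obstacle is the rigidity step: leveraging the $\sigma$-ergodicity of $\mu_\infty$ to collapse the $p_m(x)$-term average of Lemma \ref{avrg} to a single value. The mechanism is that $\sigma$-ergodicity makes each $\mu_\infty(A_a^{O_m(x)})$ take values in $\{0,1\}$, which is incompatible with a nontrivial weighted $\{0,1\}$-average unless the family $\{\mu(O_m^{-q}(x))\}_q$ is constant; Lemma \ref{remarco} then converts this constancy into $\phi$-invariance on orbit balls, and Lemma \ref{weakcolumn} lifts it to the full Borel $\sigma$-algebra.
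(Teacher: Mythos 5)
Your proposal is correct and takes essentially the same route as the paper: both reduce to orbit balls via Lemma \ref{weakcolumn}, combine Lemma \ref{remarco} with Lemma \ref{avrg} to express $\mu_{\infty}(A_{a}^{O_{m}(x)})$ as a $\{0,1\}$-average, and use $\sigma-$ergodicity of $\mu_{\infty}$ to force all the $\mu(O_{m}^{-q}(x))$ to coincide (the paper phrases this as a contradiction with $\frac{1}{p_{m}(x)}\leq\mu_{\infty}(A_{\mu(O_{m}^{0}(x))}^{O_{m}(x)})\leq\frac{p_{m}(x)-1}{p_{m}(x)}$, you argue directly). The only cosmetic difference is that your final "compare $q=0$ with $q=1$" step needs the trivial remark that when $p_{m}(x)=1$ the conclusion follows from Lemma \ref{remarco} alone with $i=0,1$.
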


\begin{proof}
If $\phi\mu=\mu$ then $\mu=\mu_{\infty}$ and hence it is $\sigma-$ergodic.

Suppose $\mu_{\infty}$ is $\sigma-$ergodic and $\mu$ is not $\phi-$invariant.
By Lemma \ref{weakcolumn} we know there exists $x\in$ $LP(\phi)$ and
$m\in\mathbb{N}$ such that
\[
\mu(O_{m}(x))\neq\mu(\phi^{-1}O_{m}(x)).
\]

This implies that that $p_{m}(x)\geq2.$

We have that
\begin{align*}
O_{m}(x)\cap LP(\phi)  &  =O_{m}^{0}(x)\cap LP(\phi),\text{ and}\\
\phi^{-1}O_{m}(x)\cap LP(\phi)  &  =O_{m}^{-1}(x)\cap LP(\phi).
\end{align*}
Since $(X,\phi)$ is $\mu-LP$ we obtain that
\[
\mu(O_{m}^{0}(x))\neq\mu(O_{m}^{-1}(x)).
\]

Using Lemma \ref{avrg} and Lemma \ref{remarco} we get
\[
\mu_{\infty}(A_{a}^{O_{m}(x)})=\frac{1}{p_{m}(x)}\sum_{q=0}^{p_{m}(x)-1}%
\mu(A_{a}^{O_{m}^{-q}(x)}).
\]
We reach a contradiction because $A_{\mu(O_{m}^{0}(x))}^{O_{m}(x)}$ is
$\sigma-$invariant, $\mu_{\infty}$ is $\sigma-$ergodic but
\[
\frac{1}{p_{m}(x)}\leq\mu_{\infty}(A_{\mu(O_{m}^{0}(x))}^{O_{m}(x)})\leq
\frac{p_{m}(x)-1}{p_{m}(x)}.
\]

\end{proof}

Every subshift has at least one \textbf{measure of maximal entropy (MME)},
i.e. a measure whose entropy is the same as the topological entropy of the
subshift. A 1D subshift is \textbf{irreducible} if for every pair of balls
$U,V$ there exists $j\in\mathbb{Z}$ such that $\sigma^{j}U\cap V\neq
\emptyset.$ A well known result of Shannon and Parry states that every
irreducible 1D SFT admits a unique MME, and it always has full support
\cite{parry1964intrinsic}. The MME of a fullshift is the uniform Bernoulli measure.

We note that we are only discussing measures of maximal entropy with respect
to the shift not to $\phi.$

\begin{theorem}
[Coven-Paul \cite{covenpaul}]\label{coven}Let $X$ be a 1D irreducible SFT with
a unique MME, and $(X,\phi)$ a CA. Then $(X,\phi)$ is surjective if and only
if it preserves the MME. In particular if $\phi:\mathcal{A}^{\mathbb{Z}%
}\rightarrow\mathcal{A}^{\mathbb{Z}}$ is a CA, then $(X,\phi)$ is surjective
if and only if it preserves the uniform Bernoulli measure.
\end{theorem}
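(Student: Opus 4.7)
The plan is to prove the two implications separately. The easy direction (preservation of the MME implies surjectivity) rests on the fact that the MME of an irreducible SFT has full support. The hard direction (surjectivity implies preservation of the MME) uses the machinery of topological factor maps together with the uniqueness of the MME stated just above the theorem.

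For the easy direction I would argue by contradiction: suppose $\phi\mu=\mu$ but $\phi(X)\subsetneq X$. Since $\phi$ is continuous on the compact space $X$ and commutes with $\sigma$, the image $\phi(X)$ is a closed $\sigma$-invariant proper subset of $X$, so its complement $U:=X\setminus\phi(X)$ is a nonempty open set. By the full support property of the MME we have $\mu(U)>0$. On the other hand $\phi^{-1}(U)=\emptyset$, so $\mu(U)=(\phi\mu)(U)=\mu(\phi^{-1}(U))=0$, a contradiction.

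For the converse I would exploit that a continuous, surjective, $\sigma$-commuting self-map $\phi$ of $X$ is a topological factor map $(X,\sigma)\to(X,\sigma)$. First I would lift $\mu$ through $\phi$: since $\phi$ is surjective, the convex set $\{\nu:\phi\nu=\mu\}$ of Borel probabilities on $X$ is nonempty and weak-$\ast$ closed. Picking any $\nu_{0}$ in this set and passing to a weak-$\ast$ limit point of the Ces\`aro averages $\tfrac{1}{n}\sum_{k=0}^{n-1}\sigma^{k}\nu_{0}$ yields a $\sigma$-invariant measure $\nu$ that still satisfies $\phi\nu=\mu$, since $\phi(\sigma^{k}\nu_{0})=\sigma^{k}(\phi\nu_{0})=\sigma^{k}\mu=\mu$. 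Next I would apply the standard factor-map entropy inequality to obtain $h_{\sigma}(\nu)\geq h_{\sigma}(\phi\nu)=h_{\sigma}(\mu)=h_{\mathrm{top}}(X,\sigma)$; the variational principle supplies the reverse inequality, so $\nu$ is itself an MME. Uniqueness of the MME then forces $\nu=\mu$, and hence $\phi\mu=\phi\nu=\mu$.

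The main obstacle I anticipate is the construction of the $\sigma$-invariant lift $\nu$: one must know that the averaging procedure remains inside the fiber $\{\nu:\phi\nu=\mu\}$ and produces something genuinely $\sigma$-invariant in the weak-$\ast$ limit, both of which rely on $\phi\sigma=\sigma\phi$ together with compactness of the space of probability measures. The ``in particular'' assertion is then immediate, since $\mathcal{A}^{\mathbb{Z}}$ is an irreducible SFT whose unique MME is the uniform Bernoulli measure, which has entropy $\log|\mathcal{A}|$ equal to the topological entropy of the full shift.
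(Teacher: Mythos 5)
Your proposal is correct, but note that the paper does not prove this statement at all: it is imported as a black box, attributed to Coven--Paul \cite{covenpaul}, so there is no in-paper argument to compare against. Both of your directions are sound. The easy direction (preservation implies surjectivity) correctly uses that the Parry measure of an irreducible SFT has full support together with $\phi^{-1}(X\setminus\phi(X))=\emptyset$. The hard direction is the standard modern argument: lift $\mu$ through the surjection (the pushforward map on probability measures is affine, weak-$\ast$ continuous, and its compact convex image contains every Dirac mass, hence all of $M(X)$), make the lift $\sigma$-invariant by Ces\`aro averaging (this stays in the fiber because $\phi\sigma=\sigma\phi$ and $\mu$ is $\sigma$-invariant), then combine the factor entropy inequality $h_{\sigma}(\mu)\leq h_{\sigma}(\nu)$ with the variational principle and uniqueness of the MME to force $\nu=\mu$. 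It is worth observing that your argument only uses that the MME is unique and fully supported, not irreducibility or one-dimensionality per se; in other words you have essentially proved the generalization (Theorem 3.3 of \cite{Meester01higher-dimensionalsubshifts}) that the paper invokes later for multidimensional subshifts, whereas the original Coven--Paul proof is specific to the 1D setting. The one step you flag as a potential obstacle, the existence and invariance of the lift, is exactly where the compactness and commutation arguments you sketch are needed, and they go through without difficulty.
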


\begin{lemma}
\label{nonw}\bigskip Let $(X,\phi)$ be CA. Assume that $\phi$ preserves a
measure with full support. If $x$ is an equicontinuity point then $x$ is
recurrent $(x\in R(\phi))$.
\end{lemma}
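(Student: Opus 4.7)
The plan is to combine Poincar\'e recurrence (to get a dense set of recurrent points) with the definition of equicontinuity (to transfer recurrence from a nearby point to $x$). Let $\nu$ denote the $\phi$-invariant measure with full support.

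First, I would observe that Poincar\'e recurrence applied to $\nu$ yields $\nu(R(\phi))=1$. Since $\nu$ has full support, any set of full $\nu$-measure meets every nonempty open set, so $R(\phi)$ is dense in $X$.

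Next, fix an equicontinuity point $x$ and $m\in\mathbb{N}$; it suffices to show that $\phi^i(x)\in B_m(x)$ for infinitely many $i$, because the balls $\{B_m(x)\}_{m\in\mathbb{N}}$ form a neighbourhood basis of $x$. By equicontinuity, choose $n\geq m$ with $B_n(x)\subset O_m(x)$. Since the cylinder $B_n(x)$ is nonempty and open, density of $R(\phi)$ provides some $y\in R(\phi)\cap B_n(x)$; because $B_n(x)$ is clopen we have $B_n(y)=B_n(x)$, so recurrence of $y$ gives a sequence $i_k\to\infty$ with $\phi^{i_k}(y)\in B_n(x)$, i.e.\ $(\phi^{i_k}y)_{W_n}=x_{W_n}$.

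Finally, I would combine the two pieces of information at the times $i_k$. Since $y\in O_m(x)$, we have $(\phi^{i_k}x)_{W_m}=(\phi^{i_k}y)_{W_m}$, and since $n\geq m$, the equality $(\phi^{i_k}y)_{W_n}=x_{W_n}$ restricts to $(\phi^{i_k}y)_{W_m}=x_{W_m}$. Hence $(\phi^{i_k}x)_{W_m}=x_{W_m}$, so $\phi^{i_k}(x)\in B_m(x)$ for infinitely many $k$, proving $x\in R(\phi)$. There is no real obstacle here; the only subtle point is noticing that the hypothesis ``$\phi$ preserves a measure with full support'' is exactly what is needed to upgrade the abstract ``a.e.\ recurrent'' conclusion of Poincar\'e into density of $R(\phi)$, which is what lets equicontinuity do its work.
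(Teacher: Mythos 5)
Your proof is correct and follows essentially the same route as the paper: Poincar\'e recurrence supplies returns to the cylinder $B_n(x)$ (the paper applies it directly to that positive-measure ball, while you first upgrade it to density of $R(\phi)$ via full support), and equicontinuity via $B_n(x)\subset O_m(x)$ transfers those returns to $x$ itself. The only cosmetic difference is that the paper uses $O_{2m}$ and a triangle-inequality estimate where you exploit the ultrametric/cylinder structure to work with $O_m$ directly; both are fine.
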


\begin{proof}
Let $x$ be an equicontinuity point and $m\in\mathbb{N}$. There exists
$n\geq2m$ such that $B_{n}(x)\subset O_{2m}(x).$ We have that $\phi$ preserves
a fully supported measure. Using Poincare's recurrence theorem we conclude
there exists $j\in\mathbb{N}$ such that $\phi^{j}B_{n}(x)\cap B_{n}%
(x)\neq\emptyset.$ This implies that $\phi^{j}x\in B_{m}(x)$, and thus $x\in
R(\phi).$
\end{proof}

\bigskip

In \cite{BlanchardEQ} it was asked under which conditions the limit measure,
under a 1D CA, of $\sigma-$ergodic measures that give full measure to
equicontinuity points i.e. $\mu(EQ(\phi))=1,$\ is $\sigma-$ergodic, a measure
of maximal entropy or $\phi-$ergodic$.$ We address those questions.

\begin{theorem}
\label{sigmaergodic}Let $X$ be a 1D irreducible SFT, $(X,\phi)$ a surjective
CA, and $\mu$ a $\sigma-$ergodic measure with $\mu(EQ(\phi))=1$. Then
$\mu_{\infty}$ is $\sigma-$ergodic if and only if $\mu$ is $\phi-$invariant.
\end{theorem}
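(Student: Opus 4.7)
My plan is to reduce the statement to Theorem \ref{mme copy(1)} by upgrading the hypothesis $\mu(EQ(\phi))=1$ to $\mu$-$LP$. Once $(X,\phi)$ is shown to be $\mu$-$LP$, Theorem \ref{mme copy(1)} yields the equivalence immediately.

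First, I would produce $\mu$-$LEP$. Since $X$ is an irreducible 1D SFT, its $\sigma$-periodic points are dense, so $cl_{d}(P_{X}(\sigma))=X$ and in particular $\mu(cl_{d}(P_{X}(\sigma)))=1$. Combined with $\mu(EQ(\phi))=1$, Proposition \ref{aqui} gives that $(X,\phi)$ is $\mu$-$LEP$.

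Next I would promote $\mu$-$LEP$ to $\mu$-$LP$ via Lemma \ref{nw}, which says $LP(\phi)=R(\phi)\cap LEP(\phi)$. So it suffices to prove $\mu(R(\phi))=1$. Here the surjectivity of $\phi$ enters: since $X$ is a 1D irreducible SFT, by the Shannon--Parry theorem it carries a unique measure of maximal entropy $\nu$ with full support, and by Theorem \ref{coven} (Coven--Paul) the surjectivity of $\phi$ implies $\phi\nu=\nu$. Thus $\phi$ preserves a fully supported measure, and Lemma \ref{nonw} tells us every equicontinuity point is $\phi$-recurrent, i.e.\ $EQ(\phi)\subset R(\phi)$. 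Since $\mu(EQ(\phi))=1$, we get $\mu(R(\phi))=1$, and hence
\[
\mu(LP(\phi))=\mu(R(\phi)\cap LEP(\phi))=1,
\]
so $(X,\phi)$ is $\mu$-$LP$.

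Finally, applying Theorem \ref{mme copy(1)} with the $\sigma$-ergodic measure $\mu$ and the $\mu$-$LP$ CA $(X,\phi)$ yields that $\mu_{\infty}$ is $\sigma$-ergodic if and only if $\mu$ is $\phi$-invariant, which is exactly the conclusion. I do not expect a serious obstacle here; the theorem is essentially a packaging of the earlier machinery, and the only nontrivial input beyond the prior lemmas is the Coven--Paul invariance of the MME, which is precisely what links the surjectivity hypothesis to Lemma \ref{nonw}.
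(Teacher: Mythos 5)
Your proposal is correct and follows essentially the same route as the paper: Proposition \ref{aqui} to get $\mu$-$LEP$, then Shannon--Parry and Coven--Paul to produce a fully supported $\phi$-invariant measure so that Lemma \ref{nonw} and Lemma \ref{nw} upgrade this to $\mu$-$LP$, and finally Theorem \ref{mme copy(1)}. Your explicit verification that an irreducible 1D SFT has dense periodic points (so that Proposition \ref{aqui} applies) is a small point the paper leaves implicit, but the argument is otherwise identical.
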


\begin{proof}
If $\mu=\phi\mu,$ then $\mu_{\infty}=\mu$ is $\sigma-$ergodic$.$

Since $\phi$ is surjective by Shannon-Parry and Coven-Paul we obtain that
$\phi$ preserves a fully supported measure. By Proposition \ref{aqui} we have
that $\mu(LEP(\phi))=1.$ Using Lemma \ref{nonw} and Lemma \ref{nw} we get that
$\mu(LP(\phi))=\mu(LEP(\phi)\cap R(\phi))=1;$ and hence $(X,\phi)$ is
$\mu-LP.$ Using Theorem \ref{mme copy(1)} we obtain $\mu=\phi\mu.$
\end{proof}

The proof of the following result is similar.

\begin{theorem}
\label{mmecor}Let $X$ be a 1D irreducible SFT, $(X,\phi)$ a CA, and $\mu$ a
$\sigma-$ergodic measure with $\mu(EQ(\phi))=1$. Then $\mu_{\infty}$ is the
MME if and only if $\mu$ is the MME and $(X,\phi)$ is surjective.
\end{theorem}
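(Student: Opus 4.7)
The plan is to mirror the structure of the proof of Theorem \ref{sigmaergodic}, using Theorem \ref{mme copy(1)} as the engine, with the additional step of extracting surjectivity and the MME property of $\mu$ from the assumption that $\mu_\infty$ is the MME.

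The easy direction is the $(\Leftarrow)$ implication. If $\mu$ is the MME and $(X,\phi)$ is surjective, then by Theorem \ref{coven} (Coven--Paul) we have $\phi\mu = \mu$, so every $\mu_n^c = \mu$, hence $\mu_\infty = \mu$ is the MME.

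For the $(\Rightarrow)$ direction I would first extract surjectivity of $\phi$. Since $X$ is an irreducible 1D SFT, the unique MME has full support by Shannon--Parry, so $\mu_\infty$ has full support. On the other hand, for every $i \geq 1$ the measure $\phi^i \mu$ is concentrated on the closed set $\phi(X)$, hence so is each Cesaro average $\mu_n^c$, and therefore $\mu_\infty(\phi(X)) = 1$. Full support of $\mu_\infty$ then forces $\phi(X) = X$, i.e.\ $\phi$ is surjective. Applying Coven--Paul again, $\phi$ preserves the MME, which is a fully supported measure.

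Now I would upgrade the $\mu$-equicontinuity hypothesis to $\mu$-$LP$. Since $\mu(EQ(\phi)) = 1$ and periodic points are dense in the irreducible SFT, Proposition \ref{aqui} gives $\mu(LEP(\phi)) = 1$. Using that $\phi$ preserves a fully supported measure, Lemma \ref{nonw} yields $EQ(\phi) \subset R(\phi)$, hence $\mu(R(\phi)) = 1$; Lemma \ref{nw} then gives $\mu(LP(\phi)) = \mu(LEP(\phi) \cap R(\phi)) = 1$, so $(X,\phi)$ is $\mu$-$LP$. Since the MME of an irreducible SFT is $\sigma$-ergodic, $\mu_\infty$ is $\sigma$-ergodic, and Theorem \ref{mme copy(1)} applies to yield $\phi\mu = \mu$. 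Then $\mu_n^c = \mu$ for all $n$, so $\mu = \mu_\infty$, which is the MME.

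I expect the only subtle point to be the argument that $\mu_\infty$ having full support forces $\phi$ surjective; everything else is essentially invoking earlier machinery (Propositions \ref{aqui}, Lemmas \ref{nw}, \ref{nonw}, Theorem \ref{mme copy(1)}, and Coven--Paul) in the same pattern as the proof of Theorem \ref{sigmaergodic}.
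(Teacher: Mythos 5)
Your proof is correct and follows essentially the same route as the paper's: Proposition \ref{aqui} plus Lemmas \ref{nonw} and \ref{nw} to upgrade $\mu(EQ(\phi))=1$ to $\mu$-$LP$, then Theorem \ref{mme copy(1)} to get $\phi$-invariance of $\mu$ and hence $\mu=\mu_\infty$ is the MME. The one place you go beyond the paper is your explicit derivation of surjectivity from the full support of $\mu_\infty$ (via $\mu_n^c(\phi(X))=1$ and Portmanteau applied to the closed set $\phi(X)$); this is worth keeping, since Lemma \ref{nonw} needs $\phi$ to preserve a fully supported measure, which is obtained from surjectivity via Coven--Paul, and the paper's own proof invokes Lemma \ref{nonw} without spelling out where that surjectivity comes from.
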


\begin{proof}
If $\mu$ is the MME and $\mu=\phi\mu,$ then $\mu_{\infty}=\mu$ is the MME$.$

Assume $\mu_{\infty}$ is the MME; hence it is $\sigma-$ergodic and has full
support. By Proposition \ref{aqui} we have that $\mu(LEP(\phi))=1.$ Using
Lemma \ref{nonw} and Lemma \ref{nw} we get that $\mu(LP(\phi))=\mu
(LEP(\phi)\cap R(\phi))=1;$ and hence $(X,\phi)$ is $\mu-LP.$ Using Theorem
\ref{mme copy(1)} we obtain $\mu=\phi\mu=\mu_{\infty}.$
\end{proof}

\bigskip\ There is interest in dynamical systems such that the orbit of the
measure will converge in some sense to a measure of maximal entropy or an
equilibrium markov measure (for example see Section 4.4 of \cite{stat}). It
has been shown that the markov measures under 1D linear permutive CA converge
in Cesaro sense to the measure of maximal entropy (e.g. \cite{LindXOR}%
\cite{pivatoalgebraic}\cite{MassMartinezCesaro}). Theorem \ref{mmecor} shows
that if a measure $\mu$ is not the measure of maximal entropy and the
equicontinuity points have full measure then the limit measure will not be the
measure of maximal entropy.

Under some conditions these results hold for multidimensional subshifts.

One of the implications of the mentioned theorem by Coven-Paul has been
generalized. Let $X$ be a subshift with a unique MME, and $(X,\phi)$ a CA. If
$(X,\phi)$ is surjective then it preserves the MME (Theorem 3.3 in
\cite{Meester01higher-dimensionalsubshifts}). The proof of the following
results are almost the same as the proofs for Theorems \ref{mmecor} and
\ref{sigmaergodic}.

\begin{theorem}
Let $X$ be an subshift with dense periodic points and a unique and fully
supported MME, $(X,\phi)$ a surjective CA, and $\mu$ a $\sigma-$ergodic
measure with $\mu(EQ(\phi))=1$. Then $\mu_{\infty}$ is $\sigma-$ergodic if and
only if $\mu$ is $\phi-$invariant.
\end{theorem}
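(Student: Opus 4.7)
The plan is to mimic almost verbatim the proof of Theorem \ref{sigmaergodic}, replacing the use of the 1D Coven--Paul theorem with its multidimensional generalization (Theorem 3.3 in \cite{Meester01higher-dimensionalsubshifts}), and using that the hypothesis of dense periodic points gives $\mu(cl_{d}(P_{X}(\sigma)))=1$ automatically, so that Proposition \ref{aqui} is applicable.

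The forward implication is immediate: if $\phi\mu=\mu$ then $\mu_{n}^{c}=\mu$ for all $n$, so $\mu_{\infty}=\mu$ and $\sigma$-ergodicity is inherited.

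For the nontrivial direction, I assume $\mu_{\infty}$ is $\sigma$-ergodic and aim to show $\phi\mu=\mu$. First I would invoke the multidimensional Coven--Paul statement: since $X$ admits a unique MME and $(X,\phi)$ is surjective, $\phi$ preserves the MME, which by hypothesis has full support. This gives a fully supported $\phi$-invariant measure, which is exactly what Lemma \ref{nonw} needs to conclude that $EQ(\phi)\subset R(\phi)$. Combined with the hypothesis $\mu(EQ(\phi))=1$, this yields $\mu(R(\phi))=1$. Next, since the periodic points are dense we have $cl_{d}(P_{X}(\sigma))=X$, so $\mu(cl_{d}(P_{X}(\sigma)))=1$, and together with $\mu(EQ(\phi))=1$ Proposition \ref{aqui} gives that $(X,\phi)$ is $\mu-LEP$. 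Applying Lemma \ref{nw} to intersect with the recurrent set, I conclude
\[
\mu(LP(\phi))=\mu(LEP(\phi)\cap R(\phi))=1,
\]
so $(X,\phi)$ is $\mu-LP$. With $\mu$ being $\sigma$-ergodic and $(X,\phi)$ being $\mu-LP$, Theorem \ref{mme copy(1)} applies directly and forces $\mu$ to be $\phi$-invariant, which completes the proof.

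The only point worth double-checking is the applicability of the cited multidimensional strengthening of Coven--Paul and the fact that Theorem \ref{mme copy(1)} is genuinely dimension-independent (its supporting Lemmas \ref{avrg} and \ref{remarco} are stated with $W_{n}=[-n,n]^{d}$, so this is fine). No step contains a serious obstacle; the content of the generalization is essentially bookkeeping, relying on the observation that all the machinery behind Theorem \ref{sigmaergodic} that was 1D-specific was in fact only the invocation of Coven--Paul, which now has a multidimensional substitute at the modest cost of assuming dense periodic points and full support of the (unique) MME.
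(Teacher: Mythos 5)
Your proof is correct and is exactly the argument the paper intends: the paper only remarks that the proof is ``almost the same'' as that of Theorem \ref{sigmaergodic}, with the 1D Coven--Paul/Shannon--Parry step replaced by the multidimensional version (Theorem 3.3 of \cite{Meester01higher-dimensionalsubshifts}) together with the added hypotheses of a fully supported unique MME and dense periodic points (the latter making Proposition \ref{aqui} applicable). You have filled in precisely those substitutions, so no further comparison is needed.
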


\begin{theorem}
Let $X$ be an subshift with dense periodic points and a unique and fully
supported MME, $(X,\phi)$ a CA, and $\mu$ a $\sigma-$ergodic measure with
$\mu(EQ(\phi))=1$. Then $\mu_{\infty}$ is the MME if and only if $\mu$ is the
MME and $\mu=\phi\mu.$
\end{theorem}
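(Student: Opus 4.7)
The plan is to mirror the proof of Theorem \ref{mmecor} almost verbatim, substituting the role played there by Coven-Paul plus surjectivity with the simpler observation that $\mu_\infty$, being the MME, is already a fully supported $\phi$-invariant measure. The forward direction is trivial: if $\mu$ is the MME and $\phi\mu=\mu$, then $\mu_n^c=\mu$ for all $n$, so $\mu_\infty=\mu$ is the MME.

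For the converse, suppose $\mu_\infty$ is the MME. By hypothesis the MME has full support, and $\mu_\infty$ is $\phi$-invariant since it is a weak limit of Cesaro averages. First I would observe that dense periodic points give $\mu(cl_d(P_X(\sigma)))=1$, so Proposition \ref{aqui} together with $\mu(EQ(\phi))=1$ yields that $(X,\phi)$ is $\mu$-LEP. Next, because $\phi$ preserves the fully supported measure $\mu_\infty$, Lemma \ref{nonw} gives that every equicontinuity point is recurrent, hence $\mu(R(\phi))=1$. Lemma \ref{nw} then upgrades this to $\mu(LP(\phi))=\mu(LEP(\phi)\cap R(\phi))=1$, so $(X,\phi)$ is $\mu$-LP. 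Finally, since the unique MME is automatically $\sigma$-ergodic (any ergodic component of an MME is itself an MME by affineness of entropy on the ergodic decomposition, so uniqueness forces ergodicity), Theorem \ref{mme copy(1)} applies and yields $\mu=\phi\mu$; therefore $\mu_\infty=\mu$ is the MME and $\mu$ itself is the MME.

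The main (essentially the only) obstacle is the absence of an explicit surjectivity hypothesis: in Theorem \ref{mmecor} surjectivity was used via Coven-Paul to produce a fully supported $\phi$-invariant measure needed for Lemma \ref{nonw}. Here I would bypass this by noting that $\mu_\infty$ itself is fully supported and $\phi$-invariant directly from the hypotheses, so the cited higher-dimensional extension of Coven-Paul is not actually invoked in this argument. Once this substitution is made, every remaining step transfers from the 1D case to the present multidimensional setting without modification.
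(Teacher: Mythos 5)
Your proof is correct and follows essentially the same route the paper intends: the paper states that this theorem's proof is ``almost the same'' as that of Theorem \ref{mmecor}, and your adaptation (Proposition \ref{aqui} via dense periodic points, Lemma \ref{nonw} applied to the fully supported $\phi$-invariant measure $\mu_{\infty}$ itself, Lemma \ref{nw}, then Theorem \ref{mme copy(1)} using the $\sigma$-ergodicity of the unique MME) is exactly that adaptation. Your observation that Coven--Paul is not needed here because $\mu_{\infty}$ already supplies the fully supported invariant measure is accurate and matches how the paper's own proof of Theorem \ref{mmecor} handles the converse direction.
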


For $\mu-LEP$ systems we can show sufficient conditions for the $\sigma
-$ergodicity of $\mu_{\infty}$.

\begin{lemma}
Let $\mu$ be a $\phi-LEP$ measure$.$Then $\mu_{\infty}$ is $\sigma-$ergodic if
and only if for every $x\in LP(\phi).$
\[
\mu_{\infty}(A_{a}^{O_{m}(x)})=\left\{
\begin{array}
[c]{cc}%
1 & if\text{ }a=\mu_{\infty}(O_{m}(x))\\
0 & \text{otherwise}%
\end{array}
\right.  .
\]

\end{lemma}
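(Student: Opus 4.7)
The plan is to interpret both directions via the pointwise and $L^{2}$ mean ergodic theorems applied to indicators of orbit balls, using Theorem \ref{cesaro} to ensure that $\mu_{\infty}$ is concentrated on $LP(\phi)$ and that orbit balls based there form a measure-theoretic generating family. Throughout I treat $\mu$ as $\sigma-$invariant (otherwise $\sigma-$ergodicity of $\mu_{\infty}$ is not defined); this makes $\mu_{\infty}$ $\sigma-$invariant as a weak limit of Cesaro averages of the $\sigma-$invariant measures $\phi^{i}\mu$.

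The forward direction is immediate from the pointwise ergodic theorem: if $\mu_{\infty}$ is $\sigma-$ergodic then the averages $\frac{1}{|W_{n}|}\sum_{i\in W_{n}}1_{O_{m}(x)}(\sigma^{i}y)$ converge $\mu_{\infty}$-a.e.\ to $\mu_{\infty}(O_{m}(x))$, which is exactly the claimed dichotomy for $A_{a}^{O_{m}(x)}$. This direction does not use the $\phi-LEP$ hypothesis.

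For the converse, I would first apply Theorem \ref{cesaro} to get that $\mu_{\infty}$ is $\phi-LP$, and then Theorem \ref{3i1}(3) to fix a $d_{\phi}-$separable set $X'\subset LP(\phi)$ with $\mu_{\infty}(X')=1$. Via the $L^{2}$ mean ergodic theorem the hypothesis rephrases as: for every $x\in LP(\phi)$ and every $m$, the conditional expectation $E_{\mu_{\infty}}[1_{O_{m}(x)}\mid\mathcal{I}_{\sigma}]$ onto the $\sigma-$invariant sigma-algebra $\mathcal{I}_{\sigma}$ is $\mu_{\infty}$-a.e.\ equal to the constant $\mu_{\infty}(O_{m}(x))$. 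To obtain $\sigma-$ergodicity I aim to upgrade this to: $E_{\mu_{\infty}}[f\mid\mathcal{I}_{\sigma}]$ is a.e.\ constant for every $f\in L^{2}(\mu_{\infty})$; triviality of $\mathcal{I}_{\sigma}$ mod $\mu_{\infty}$-null is then automatic, since every $\mathcal{I}_{\sigma}$-measurable function coincides with its own conditional expectation.

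The main obstacle is the density step needed for this upgrade: I need the linear span of $\{1_{O_{m}(x)}:x\in LP(\phi),\,m\in\mathbb{N}\}$ to be dense in $L^{2}(\mu_{\infty})$. I would reuse the key observation inside the proof of Lemma \ref{weakcolumn}: any two orbit balls are either disjoint or nested, so these indicators form a $\pi-$system, and a countable subcollection coming from the $d_{\phi}-$separability of $X'$ exhausts $X'$ and generates the trace Borel sigma-algebra on $X'$; since $\mu_{\infty}(X')=1$ this suffices. Once density is in hand, linearity and $L^{2}$-continuity of conditional expectation, together with closedness of $\mu_{\infty}$-a.e.\ constants in $L^{2}$, deliver $\sigma-$ergodicity.
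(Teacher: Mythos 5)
Your proof is correct and follows essentially the same route as the paper: the forward direction is the pointwise ergodic theorem, and the converse verifies the ergodic-theorem conclusion on the orbit balls, which form a $\pi$-system generating the Borel sets on the full-measure set $LP(\phi)$, and then upgrades to ergodicity. The only difference is cosmetic: the paper delegates that upgrade to a standard criterion (Walters, p.~41), whereas you spell it out via $L^{2}$ density and conditional expectations onto the $\sigma$-invariant sigma-algebra.
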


\begin{proof}
The $\Rightarrow)$ implication is given by the pointwise ergodic theorem.

If the equation is satisfied then the pointwise ergodic theorem conclusion
holds for all sets of the form $O_{m}(x)$ with $x\in LP_{m}(\phi)$. By
Proposition \ref{invlp} $\mu_{\infty}(LP(\phi))=1.$ Since $\left\{
O_{m}(x)\mid m\in\mathbb{N}\text{ and }x\in LP_{m}(\phi)\right\}  $ generates
the Borel sigma algebra (intersected with $LP(\phi)$)$,$ we conclude
$\mu_{\infty}$ is $\sigma-$ergodic (see \cite{walters2000introduction}\ pg.41.)
\end{proof}

\begin{theorem}
Let $\mu$ be a $\phi-LEP$, $\sigma-$ergodic measure$.$ If for every orbit ball
$O,$ with $\mu_{\infty}(O)>0,$ there exists $N_{O}$ such that
\[
\phi^{n}\mu(O)=\mu_{\infty}(O)\text{ \ for }n\geq N_{O},
\]

then $\mu_{\infty}$ is $\sigma-$ergodic.
\end{theorem}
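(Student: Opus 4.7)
The plan is to reduce the theorem to the preceding lemma, which characterizes $\sigma$-ergodicity of $\mu_\infty$ via the behaviour of the Birkhoff averages of indicator functions of orbit balls centred at locally periodic points. So I would aim to verify, for every $x\in LP(\phi)$ and every $m\in\mathbb{N}$, the identity
\[
\mu_{\infty}(A_{a}^{O_{m}(x)})=\begin{cases}1 & a=\mu_{\infty}(O_{m}(x))\\ 0 & \text{otherwise}\end{cases}.
\]

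First I would split on the value of $\mu_{\infty}(O_{m}(x))$. If $\mu_{\infty}(O_{m}(x))=0$, I would use the fact that $\mu_\infty$ is $\sigma$-invariant (being a weak limit of the $\sigma$-invariant measures $\mu^c_n$) and apply Birkhoff's pointwise ergodic theorem to $1_{O_{m}(x)}$: the averages converge $\mu_\infty$-a.e.\ to a non-negative function whose integral is $\mu_{\infty}(O_{m}(x))=0$, hence the function is $0$ $\mu_\infty$-a.e., giving $\mu_\infty(A_0^{O_m(x)})=1$. If instead $\mu_{\infty}(O_{m}(x))>0$, then by hypothesis there exists $N$ with $\phi^{n}\mu(O_{m}(x))=\mu_{\infty}(O_{m}(x))$ for all $n\geq N$. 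For every $0\leq q<p_{m}(x)$, restricting to the arithmetic progression $\{p_{m}(x)n+q\}$ and using the monotone convergence established inside the proof of Lemma \ref{limlep}, namely that $\mu(\phi^{-(p_{m}(x)n+q)}(O_{m}(x)))$ is non-decreasing in $n$ with limit $\mu(O_{m}^{-q}(x))$, I conclude that this sequence is eventually constant equal to both $\mu_{\infty}(O_{m}(x))$ and $\mu(O_{m}^{-q}(x))$. Hence $\mu(O_{m}^{-q}(x))=\mu_{\infty}(O_{m}(x))$ for every $q$, and the hypothesis of Lemma \ref{avrg} is satisfied.

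I would then apply Lemma \ref{avrg} to obtain
\[
\mu_{\infty}(A_{a}^{O_{m}(x)})=\frac{1}{p_{m}(x)}\sum_{q=0}^{p_{m}(x)-1}\mu(A_{a}^{O_{m}^{-q}(x)}),
\]
and since $\mu$ is $\sigma$-ergodic each term $\mu(A_{a}^{O_{m}^{-q}(x)})$ equals $1$ when $a=\mu(O_{m}^{-q}(x))=\mu_{\infty}(O_{m}(x))$ and $0$ otherwise; summing yields the desired dichotomy. Combining the two cases with the preceding lemma delivers the $\sigma$-ergodicity of $\mu_{\infty}$.

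The main obstacle is the bookkeeping linking the three distinct notions of convergence floating around: the hypothesis gives eventual \emph{equality} of $\phi^{n}\mu(O)$ with $\mu_\infty(O)$ along the full sequence, whereas Lemma \ref{avrg} wants eventual equality of $\mu(\phi^{-p_{m}(x)n-q}(O_{m}(x)))$ with $\mu(O_{m}^{-q}(x))$ along each arithmetic subsequence. Bridging them requires noticing that the monotone sequence inside Lemma \ref{limlep} can only be eventually constant at its own limit, which simultaneously identifies $\mu(O_{m}^{-q}(x))$ with $\mu_{\infty}(O_{m}(x))$ and verifies the hypothesis of Lemma \ref{avrg}. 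The $\mu_{\infty}$-null case is a minor nuisance handled cleanly by Birkhoff.
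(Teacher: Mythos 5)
Your proof is correct and follows essentially the same route as the paper's: the monotonicity of $\mu(\phi^{-p_{m}(x)n-q}(O_{m}(x)))$ from the proof of Lemma \ref{limlep}, combined with the hypothesis, forces $\mu(O_{m}^{-q}(x))=\mu_{\infty}(O_{m}(x))$ for every $q$, after which Lemma \ref{avrg}, the $\sigma$-ergodicity of $\mu$, and the preceding lemma finish the argument. Your explicit treatment of the case $\mu_{\infty}(O_{m}(x))=0$ (which the stated hypothesis does not literally cover) via the pointwise ergodic theorem for the $\sigma$-invariant measure $\mu_{\infty}$ is a detail the paper's proof passes over silently, and is a worthwhile addition.
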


\begin{proof}
Let $m\in\mathbb{N},$ $x\in LP_{m}(\phi)$ and $0\leq q<p_{m}(x).$ From the
proof of equation (\ref{form}) of Lemma \ref{limlep} one can see that for
every $0\leq q<p_{m}(x)$, $\phi^{p_{m}(x)n+q}\mu(O_{m}(x))$ is non-decreasing
and converges $($as $n\rightarrow\infty)$. Using this and the hypothesis we
have that there exists $N$ such that
\[
\phi^{p_{m}n+q}\mu(O_{m}(x))=\mu_{\infty}(O_{m}(x))\text{ for }n\geq N.
\]
This implies
\[
\mu(O_{m}^{-q}(x))=\mu_{\infty}(O_{m}(x)).
\]
Using Lemma \ref{avrg} we obtain
\[
\mu_{\infty}(A_{a}^{O_{m}(x)})=\frac{1}{p_{m}(x)}\sum_{r=0}^{p_{m}(x)-1}%
\mu(A_{a}^{O_{m}^{-r}(x)})\text{ for every }a\in\mathbb{R}\text{.}%
\]
This implies
\[
\mu_{\infty}(A_{a}^{O_{m}(x)})=\mu(A_{a}^{O_{m}^{-q}(x)})\text{ for every
}a\in\mathbb{R}.
\]

Using the $\sigma-$ergodicity of $\mu$ we get
\[
\mu(A_{a}^{O_{m}^{-q}(x)})=\left\{
\begin{array}
[c]{cc}%
1 & if\text{ }a=\mu(O_{m}^{-q}(x))\\
0 & \text{otherwise}%
\end{array}
\right.  .
\]
Hence, we obtain
\[
\mu_{\infty}(A_{a}^{O_{m}(x)})=\left\{
\begin{array}
[c]{cc}%
1 & if\text{ }a=\mu_{\infty}(O_{m}(x))\\
0 & \text{otherwise}%
\end{array}
\right.  .
\]

Using the previous lemma we conclude that $\mu_{\infty}$ is $\sigma-$ergodic.
\end{proof}

\bigskip

\bigskip We will now study measure preserving dynamical systems. We say
$(M,T,\mu)$ is a measure preserving transformation if $(M,\mu)$ is a measure
space, $T:M\rightarrow M$ is measurable and $T\mu=\mu.$ When we say $\mu$ is
ergodic we also assume it is invariant under $T$.

Two measure preserving transformations $(M_{1},T_{1},\mu_{1})$ and
$(M_{2},T_{2},\mu_{2})$ are\textbf{ isomorphic (measurably) } if there exists
an invertible measure preserving transformation\textit{\ }$f:(X_{1},\mu
_{1})\rightarrow(X_{2},\mu_{2}),$ such that the inverse is measure preserving
and $T_{2}\circ f=f\circ T_{1}.$

The spectral theory for dynamical systems (TDS and measure preserving
transformations) is useful for studying rigid transformations. We will give
the definitions and state the most important results. For more details and
proofs see \cite{walters2000introduction}.

A measure preserving transformation $T$ on a measure space $(M,\mu)$ generates
a unitary linear operator on the Hilbert space $L^{2}(M,\mu),$ by
$U_{T}:f\mapsto f\circ T,$ known as the \textbf{Koopman operator}$.$ The
spectrum of the Koopman operator is called the \textbf{spectrum}\textit{\ }of
the measure preserving transformation. The spectrum is \textbf{pure point
}or\textbf{\ discrete} if there exists an orthonormal basis for $L^{2}(M,\mu)$
which consists of eigenfunctions of the Koopman operator$.$ The spectrum
is\textit{\ }\textbf{rational} if the eigenvalues are complex roots of unity.
Classical results by Halmos and Von Neumann state that two ergodic measure
preserving transformation with discrete spectrum have the same group of
eigenvalues if and only if they are isomorphic, and that an ergodic measure
preserving transformation has pure point spectrum if and only if it is
isomorphic to a rotation on a compact metric group. The eigenfunctions of a
rotation on a compact group are generated by the characters of the group.
Discrete spectrum can be characterized for topological dynamical systems using
a weak forms of $\mu-$equicontinuity \cite{weakeq}.

\begin{example}
\label{odom}Let $S=(s_{0},s_{1},...)$ be a finite or infinite sequence of
integers larger or equal than 1. The $S-$\textbf{adic odometer} is the
$+(1,0,...)$ (with carrying) map defined on the compact set $D=\prod
\nolimits_{i\geq0}\mathbb{Z}_{s_{i}}$ (for a survey on odometers see
\cite{surveyodometers}).
\end{example}

These transformations are also called adding machines. An ergodic measure
preserving transformation has discrete rational spectrum if and only if it is
isomorphic to an odometer.

Any odometer can be embedded in a CA \cite{yassawiodometer}.

The following result was proved in \cite{mueqtds}.

\begin{proposition}
Let $(X,\phi)$ be a CA and $\mu$ a $\phi-$invariant measure. If $(X,\phi)$ is
$\mu-LEP$ then $(X,\phi,\mu)$ has discrete rational spectrum.
\end{proposition}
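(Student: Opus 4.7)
The plan is to exhibit an orthonormal basis of $L^{2}(X,\mu)$ consisting of eigenfunctions of the Koopman operator $U_{\phi}$ whose eigenvalues are roots of unity. First, invoking Proposition \ref{invlp}, the $\phi$-invariance of $\mu$ upgrades the $\mu-LEP$ hypothesis to $\mu-LP$, so $\mu(LP(\phi))=1$. For each $m\in\mathbb{N}$, the orbit balls $\{O_{m}(x):x\in LP_{m}(\phi)\}$ form a countable measurable partition of $LP_{m}(\phi)$; let $\mathcal{F}_{m}$ denote the $\sigma$-algebra they generate. Since $O_{m+1}(x)\subseteq O_{m}(x)$, the sequence $\mathcal{F}_{m}$ is increasing, and since each $d$-ball $B_{n}(x)$ is a union of level-$n$ orbit balls (any two points lying in the same $O_{n}(y)$ agree on $W_{n}$), we have $\bigvee_{m}\mathcal{F}_{m}=\mathcal{B}(X)$ modulo $\mu$-null sets.

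Next I would analyse the action of $U_{\phi}$ on $L^{2}(X,\mathcal{F}_{m},\mu)$, which is spanned by the indicators $\mathbf{1}_{O_{m}(x)}$. The key claim is: for $x\in LP(\phi)$ with $p:=p_{m}(x)$,
\[
\phi^{-1}(O_{m}(x))\cap LP(\phi)=O_{m}(\phi^{p-1}x)\cap LP(\phi)\quad(\text{mod }\mu).
\]
Indeed, for $y\in LP(\phi)$ the condition $\phi y\in O_{m}(x)$ forces $(\phi^{i+1}y)_{W_{m}}=(\phi^{i}x)_{W_{m}}$ for all $i\geq 0$; the $W_{m}$-column of $y$ is then periodic with the same minimal period $p$ as the column of $x$, so periodicity pins down $(y)_{W_{m}}=(\phi^{p-1}x)_{W_{m}}$, and in fact the entire column of $y$ coincides with that of $\phi^{p-1}x$. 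Together with $\phi$-invariance of $\mu$, this shows that the orbit balls $O_{m}(x),O_{m}(\phi x),\ldots,O_{m}(\phi^{p-1}x)$ are pairwise disjoint atoms of equal measure on which $U_{\phi}$ acts as a cyclic backward shift.

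On each such cycle of length $p$, the $p$-dimensional subspace spanned by $\{\mathbf{1}_{O_{m}(\phi^{k}x)}\}_{k=0}^{p-1}$ diagonalises into eigenspaces of $U_{\phi}$ with eigenvalues $e^{2\pi ij/p}$ ($j=0,\ldots,p-1$) via the discrete Fourier transform $f_{j}:=\sum_{k=0}^{p-1}e^{2\pi ijk/p}\mathbf{1}_{O_{m}(\phi^{k}x)}$. Summing over cycles, $L^{2}(X,\mathcal{F}_{m},\mu)$ admits an orthogonal basis of $U_{\phi}$-eigenfunctions with root-of-unity eigenvalues. Because the $\mathcal{F}_{m}$ are increasing and $\bigvee_{m}\mathcal{F}_{m}=\mathcal{B}(X)$ (mod $\mu$), the union $\bigcup_{m}L^{2}(X,\mathcal{F}_{m},\mu)$ is dense in $L^{2}(X,\mu)$, and a standard Gram--Schmidt-type nesting of the bases produces an orthonormal eigenbasis of the whole space. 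This gives pure point spectrum with eigenvalues in the roots of unity, i.e.\ discrete rational spectrum.

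The main obstacle I anticipate is the inverse-image computation identifying $\phi^{-1}(O_{m}(x))$ with $O_{m}(\phi^{p-1}x)$ modulo null sets. Although intuitive from the periodic column picture, the calculation requires care because $\phi$ need not be injective; what rescues the argument is restricting to $LP(\phi)$, where the future $W_{m}$-column of $y$ already forces its present $W_{m}$-coordinate through the minimal period, and then using $\phi$-invariance of $\mu$ to discard the null set outside $LP(\phi)$.
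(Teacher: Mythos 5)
The paper gives no proof of this proposition at all --- it is imported from the reference \cite{mueqtds} --- so there is no internal argument to compare yours against. Judged on its own, your proof is correct and is essentially the canonical argument one would expect: pass from $\mu$-$LEP$ to $\mu$-$LP$ via Proposition \ref{invlp}, filter $L^{2}(X,\mu)$ by the increasing $\sigma$-algebras generated by the countable partitions of $LP_{m}(\phi)$ into level-$m$ orbit balls (countable because a periodic $W_{m}$-column is determined by one period), observe that $U_{\phi}$ permutes each finite $\phi$-cycle of orbit balls cyclically, and diagonalise each cycle by the discrete Fourier transform. Your key identity $\phi^{-1}(O_{m}(x))\cap LP(\phi)=O_{m}(\phi^{p-1}x)\cap LP(\phi)$ checks out (it is the same kind of computation the paper itself uses in the proof of Lemma \ref{remarco}), and restricting to the full-measure set $LP(\phi)$ is exactly the right way to neutralise the non-injectivity of $\phi$. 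Two small points deserve an explicit line. First, the pairwise disjointness of $O_{m}(x),\dots,O_{m}(\phi^{p-1}x)$, which you need both for the equal-measure claim to be meaningful and for the orthogonality of the $f_{j}$: if $O_{m}(\phi^{j}x)=O_{m}(\phi^{k}x)$ with $0\leq j<k<p$, then the $W_{m}$-column of $x$ acquires the period $\gcd(p,k-j)<p$, contradicting minimality of $p=p_{m}(x)$. Second, since $\phi$ need not be invertible, $U_{\phi}$ is a priori only an isometry rather than unitary; this does not affect your argument (eigenspaces for distinct unimodular eigenvalues of an isometry are still orthogonal, and once the dense eigenbasis with unimodular eigenvalues is in place $U_{\phi}$ is automatically unitary), but it is worth saying. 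Neither point is a gap.
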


This implies that every ergodic $\mu-LEP$ CA is isomorphic to an odometer.

We obtain a stronger result if the measure is $\sigma-$invariant.

\begin{proposition}
\label{mulp}Let $(X,\phi)$ be a $\mu-$ergodic, $\mu-LEP$ CA. If $\mu$ is
$\sigma-$invariant then $(X,\phi,\mu)$ is isomorphic to a cyclic permutation
on a finite set.
\end{proposition}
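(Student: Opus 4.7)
The plan is to leverage the hypothesis that $\mu$ is $\sigma$-invariant (on top of being $\phi$-invariant, $\phi$-ergodic, and $\mu$-LEP) to strengthen the preceding proposition: rather than merely getting isomorphism to an odometer, I will show $\phi$ has finite order $\mu$-almost everywhere. Once $\phi^N = \mathrm{id}$ $\mu$-a.e., $\phi$-ergodicity forces $\mu$ to be concentrated on a single $\phi$-orbit of $N$ atoms of equal mass, which gives the desired isomorphism with a cyclic permutation on $N$ points.

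First I would apply Proposition \ref{invlp} to upgrade $\mu$-LEP to $\mu$-LP, so that for every $m$ the local period $p_m : LP(\phi) \to \mathbb{N}$ is defined on a set of full measure, with no preperiod. The function $p_m$ is $\phi$-invariant because $\phi(x)$ and $x$ share a $\phi$-orbit, so by $\phi$-ergodicity $p_m$ equals an a.e.\ constant $c_m \in \mathbb{N}$.

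The key step exploits the identity
\[
p_m(x) \; = \; \mathrm{lcm}_{j \in W_m} \, p_0(\sigma_j x),
\]
which follows from $\phi \sigma_j = \sigma_j \phi$ together with the elementary fact that the period of a vector-valued sequence is the lcm of the periods of its coordinates. Since $\mu$ is $\sigma$-invariant, $p_0 \circ \sigma_j$ has the same distribution as $p_0$ for every $j \in \mathbb{Z}^d$, and $p_0$ is a.e.\ equal to $c_0$; hence $p_0 \circ \sigma_j = c_0$ a.e. Intersecting over the countable family $\{j \in \mathbb{Z}^d\}$ produces a full-measure set $A$ on which $p_0(\sigma_j x) = c_0$ for \emph{every} $j$. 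On $A$ the lcm formula collapses to $p_m(x) = c_0$ for every $m$, so $\phi^{c_0}(x) = x$ on a set of full measure.

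Finally, having $\phi^{c_0} = \mathrm{id}$ $\mu$-a.e., I would invoke the pointwise ergodic theorem: for any $f \in L^1(\mu)$ the time average $\frac{1}{c_0} \sum_{i=0}^{c_0-1} f(\phi^i x)$ equals $\int f\, d\mu$ for $\mu$-a.e.\ $x$, forcing the empirical measures along $\phi$-orbits to agree almost surely. Together with the fact that $X$ is standard Borel, this implies $\mu$ is supported on a single $\phi$-orbit consisting of $c_0$ atoms of mass $1/c_0$ each, yielding the isomorphism with a cyclic permutation on $c_0$ points. The main obstacle I anticipate is the lcm step: it is where all the hypotheses (LP from $\phi$-invariance of $\mu$, $\phi$-ergodicity to get a common constant $c_0$, and $\sigma$-invariance to propagate it across columns) must be woven together, in particular ensuring that the countable intersection over $j \in \mathbb{Z}^d$ of full-measure sets legitimately pools what are a priori only a.e.\ statements into a uniform bound on a single full-measure set.
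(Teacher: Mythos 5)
Your proof is correct and follows essentially the same route as the paper: upgrade $\mu$-LEP to $\mu$-LP via Proposition \ref{invlp}, use $\phi$-ergodicity to make the period of the $0$th column almost everywhere constant, and use $\sigma$-invariance to propagate that constant period to every coordinate, concluding that $\phi^{c_0}=\mathrm{id}$ a.e.\ and hence that the system is a cyclic permutation on $c_0$ equal atoms. The paper obtains the constant period by taking the $\phi$-orbit of a positive-measure orbit ball instead of observing that $p_0$ is a $\phi$-invariant measurable function, and it leaves the final identification with a finite cyclic permutation implicit, but these are cosmetic differences.
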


\begin{proof}
By Proposition \ref{invlp} $\mu(LP(\phi))=1.$ Since $\mu-LP$ CA are $\mu
-$equicontinuous, there exists $x\in LP(\phi)$ such that $\mu(O_{0}(x))>0.$
Let $O_{0}^{\infty}$ be the $\phi-$orbit of $O_{0}(x).$ We have that
$\mu(O_{0}^{\infty})>0.$ Since $\phi(O_{0}^{\infty})=O_{0}^{\infty}$ and $\mu$
is $\phi-$ergodic, then $\mu(O_{0}^{\infty})=1.$

We have that $p_{0}(O_{0}^{\infty})=\left\{  p_{0}(x)\right\}  .$ This implies
the $0th$ column of almost every point is periodic with period $p_{0}(x).$
Since $\mu$ is $\sigma-$invariant we have that almost every point is $\phi
-$periodic (with period $p_{0}(x)$).
\end{proof}

Using this and other assumptions we can characterize when a limit measure of a
$\mu-$equicontinuous CA is $\phi-$ergodic.

\begin{corollary}
\label{odometer}Let $X$ be a 1D SFT, $\mu$ be a $\sigma-$invariant measure,
$(X,\phi)$ be a $\mu-$equicontinuous CA. We have that $(X,\phi,\mu_{\infty})$
is isomorphic to a cyclic permutation on a finite set if and only if
$\mu_{\infty}$ is $\phi-$ergodic.
\end{corollary}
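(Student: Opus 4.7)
The plan is to reduce the corollary to Proposition \ref{mulp} applied to the limit measure $\mu_{\infty}$, after verifying that all its hypotheses transfer from $\mu$ to $\mu_{\infty}$.

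For the reverse ($\Leftarrow$) direction I would assume $\mu_{\infty}$ is $\phi$-ergodic and verify the three hypotheses of Proposition \ref{mulp} for the triple $(X,\phi,\mu_{\infty})$. First, since $X$ is a 1D SFT, $\mu$ is $\sigma$-invariant, and $(X,\phi)$ is $\mu$-equicontinuous, Proposition \ref{1i3} gives that $(X,\phi)$ is $\mu$-$LEP$. Then Theorem \ref{cesaro} applies: the Cesaro averages $\mu_{n}^{c}$ converge weakly to a $\phi$-$LP$ measure $\mu_{\infty}$, which in particular is $\phi$-invariant and $\phi$-$LEP$. Second, I would note that since $\phi$ commutes with every $\sigma_{i}$ and $\mu$ is $\sigma$-invariant, each $\phi^{i}\mu$ is $\sigma$-invariant, hence so is every $\mu_{n}^{c}$; $\sigma$-invariance passes to the weak limit, so $\mu_{\infty}$ is $\sigma$-invariant. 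Third, $\mu_{\infty}$ is $\phi$-ergodic by assumption. Applying Proposition \ref{mulp} with $\mu_{\infty}$ in the role of $\mu$ then yields that $(X,\phi,\mu_{\infty})$ is isomorphic to a cyclic permutation on a finite set.

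For the forward ($\Rightarrow$) direction I would just observe that a cyclic permutation on a finite set, viewed as a measure-preserving system with its unique invariant (uniform) probability measure, is ergodic: any invariant measurable set is a union of orbits, and there is only one orbit. Since ergodicity is an isomorphism invariant of measure-preserving systems, $\mu_{\infty}$ is $\phi$-ergodic.

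I do not anticipate a genuine obstacle here: the work has been done in the earlier lemmas and propositions, and the statement is essentially a packaging result. The only point that requires a moment's care is confirming that $\sigma$-invariance of $\mu$ is inherited by $\mu_{\infty}$ (which uses $\sigma\phi=\phi\sigma$ and weak-limit preservation of invariance) and that $\mu_{\infty}$'s $\phi$-$LP$ property given by Theorem \ref{cesaro} is strictly stronger than the $\phi$-$LEP$ hypothesis needed to invoke Proposition \ref{mulp}.
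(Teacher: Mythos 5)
Your proposal is correct and follows exactly the route the paper intends (the paper states the corollary without an explicit proof, as a direct packaging of Proposition \ref{1i3}, Theorem \ref{cesaro}, and Proposition \ref{mulp} applied to $\mu_{\infty}$). The points you single out for care --- that $\sigma$-invariance passes from $\mu$ to $\mu_{\infty}$ via $\sigma\phi=\phi\sigma$ and weak limits, and that the $\phi$-$LP$ conclusion of Theorem \ref{cesaro} supplies the $\phi$-$LEP$ and $\phi$-invariance hypotheses of Proposition \ref{mulp} --- are precisely the right ones.
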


\bigskip

The following corollary combines the main results of this paper.

\begin{corollary}
Let $X$ be a 1D irreducible SFT, $(X,\phi)$ CA$,$ and $\mu$ a shift-ergodic
$\mu-$equicontinuous measure. Let $\mu_{\infty}$ be the weak limit of the
Cesaro average of $\phi^{n}\mu.$ We have that

$\cdot\mu_{\infty}$ is the measure of maximal entropy if and only if $\mu$ is
the measure of maximal entropy and $\phi\mu=\mu=\mu_{\infty}.$

$\cdot\mu_{\infty}$ is $\phi-$ergodic if and only if $(X,\mu_{\infty},\phi)$
is isomorphic (measurably) to a cyclic permutation on a finite set.

Furthermore if we assume $\phi$ is surjective then

$\cdot\mu_{\infty\text{ }}$is $\sigma-$ergodic if and only if $\mu$ is $\phi-$invariant.
\end{corollary}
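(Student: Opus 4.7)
The plan is to derive each of the three bullets as a direct application of a previously established main theorem, with Proposition \ref{1i3} as the bridge between the corollary's hypothesis ``$\mu-$equicontinuous'' and the ``$\mu-LEP$'' hypothesis used in those theorems. First I would observe that since $X$ is a 1D SFT and $\mu$ is $\sigma-$ergodic (hence $\sigma-$invariant), Proposition \ref{1i3} yields that $(X,\phi)$ is $\mu-LEP$, so Theorem \ref{cesaro} guarantees $\mu_{n}^{c}\rightarrow^{w}\mu_{\infty}$ with $\mu_{\infty}$ a $\phi-LP$ measure. This justifies that the statement makes sense.

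The middle bullet is precisely Corollary \ref{odometer}, whose hypotheses (1D SFT, $\sigma-$invariant measure, $\mu-$equicontinuous CA) match ours word for word, and whose conclusion is exactly the desired biconditional. For the MME bullet I would apply Theorem \ref{mmecor}: it says $\mu_{\infty}$ is the MME if and only if $\mu$ is the MME and $\phi$ is surjective; combining this with Theorem \ref{coven} (Coven-Paul), which says $\phi$ is surjective if and only if it preserves the unique MME, I can replace ``$\phi$ surjective'' (given $\mu$ is the MME) by ``$\phi\mu=\mu$,'' which is in turn equivalent to $\mu=\mu_{\infty}$. For the last (surjective) bullet I would apply Theorem \ref{sigmaergodic} directly.

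The main obstacle is that Theorems \ref{mmecor} and \ref{sigmaergodic} are stated under the hypothesis $\mu(EQ(\phi))=1$, which is strictly stronger than $\mu-$equicontinuity (see Example \ref{product1}). Tracing the proofs, the stronger hypothesis is used only (i) to conclude $\mu-LEP$ via Proposition \ref{aqui} and (ii) to upgrade to $\mu-LP$ via Lemmas \ref{nonw} and \ref{nw}, which give recurrence of equicontinuity points when $\phi$ preserves a fully supported measure. Here $\mu-LEP$ comes for free from Proposition \ref{1i3}, so the crux is the $\mu-LP$ upgrade. Since $R(\phi)$ is $\sigma-$invariant (because $\phi$ commutes with $\sigma$), shift-ergodicity of $\mu$ gives $\mu(R(\phi))\in\{0,1\}$, and the zero case must be excluded using each bullet's extra hypothesis: for the MME bullet, $\mu_{\infty}$ has full support and is $\phi-$invariant so $\mu_{\infty}(R(\phi))=1$ by Poincar\'e; for the surjective bullet, $\phi$ preserves the fully supported MME by Shannon-Parry and Coven-Paul so the same Poincar\'e argument applies to that measure. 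Transferring this recurrence from $\mu_{\infty}$ (or the fully supported invariant measure) back to $\mu$ via the weak convergence structure of the Cesaro limit will be the delicate point, but once $\mu-LP$ is established, Theorem \ref{mme copy(1)} immediately closes out both bullets by equating $\sigma-$ergodicity of $\mu_{\infty}$ with $\phi-$invariance of $\mu$, which in the MME bullet further forces $\mu$ itself to be the MME.
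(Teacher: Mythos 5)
Your overall strategy is the paper's own: the paper offers no argument for this corollary beyond the remark that it ``combines the main results,'' i.e.\ Theorem \ref{mmecor}, Theorem \ref{sigmaergodic} and Corollary \ref{odometer}. Your second bullet (a literal citation of Corollary \ref{odometer}, whose hypotheses do match) and the easy directions of the other two bullets are fine, and your translation of ``$\phi$ surjective'' into ``$\phi\mu=\mu=\mu_{\infty}$'' via Coven--Paul is correct. You have also spotted the real subtlety: the corollary is stated for a $\mu-$equicontinuous measure, whereas Theorems \ref{mmecor} and \ref{sigmaergodic} are proved under the strictly stronger hypothesis $\mu(EQ(\phi))=1$.

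However, your proposed repair of that mismatch does not close. The stronger hypothesis is used precisely to get $\mu(R(\phi))=1$, hence $\mu-LP$ via Lemma \ref{nw}, and the paper's mechanism is Lemma \ref{nonw}: equicontinuity points are recurrent whenever $\phi$ preserves some fully supported measure. That is a pointwise, topological statement, which is why it transfers to $\mu$ as soon as $\mu(EQ(\phi))=1$; under mere $\mu-$equicontinuity there may be no equicontinuity points at all (Example \ref{product1}), so this bridge is unavailable. Your substitute --- observe that $R(\phi)$ is $\sigma-$invariant, so $\mu(R(\phi))\in\{0,1\}$ by shift-ergodicity, and rule out $0$ by Poincar\'e recurrence applied to $\mu_{\infty}$ or to the MME --- only shows that $R(\phi)$ has full measure for \emph{those} measures; since $\mu$ need not be absolutely continuous with respect to either, nothing transfers back to $\mu$, and you concede as much (``the delicate point''). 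Without $\mu-LP$, Lemma \ref{remarco}, and with it Lemma \ref{avrg} and Theorem \ref{mme copy(1)}, are unavailable: for a measure that is only $\mu-LEP$ the quantities $\mu(\phi^{-p_{m}(x)n-q}(O_{m}(x)))$ need not stabilize exactly, which is what the argument for $\sigma-$ergodicity of $\mu_{\infty}$ requires. So the ``only if'' directions of the first and third bullets remain unproved under the literal hypotheses. Either strengthen the hypothesis to $\mu(EQ(\phi))=1$ (matching the cited theorems, and evidently what is intended), or supply a genuinely new argument that a $\sigma-$ergodic, $\mu-LEP$ measure on a 1D irreducible SFT with $\phi$ surjective (resp.\ with $\mu_{\infty}$ fully supported) is automatically $\mu-LP$; that is exactly the missing step.
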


\bibliographystyle{aabbrv}
\bibliography{camel}

\newif\ifabfull\abfullfalse
\input apreambl
\begin{thebibliography}{10}

\bibitem{AkinAuslander}
\abtype{0}{E.~Akin, J.~Auslander\abphrase{1}\abphrase{0}K.~Berg}.
\newblock When is a transitive map chaotic?
\newblock \abtype{2}{Ohio State Univ. Math. Res. Inst. Publ.}
  \abtype{3}{5}\abtype{4}{2}, 25--40 \abtype{5}{1996}.

\bibitem{billingsley2009convergence}
\abtype{0}{P.~Billingsley}.
\newblock \abtype{1}{Convergence of Probability Measures}.
\newblock Wiley Series in Probability and Statistics. Wiley \abtype{5}{2009}.

\bibitem{BlanchardEQ}
\abtype{0}{F.~Blanchard\abphrase{0}P.~Tisseur}.
\newblock Some properties of cellular automata with equicontinuity points.
\newblock \abtype{2}{Annales de l'Institut Henri Poincare (B) Probability and
  Statistics} \abtype{3}{36}\abtype{4}{5}, 569 -- 582 \abtype{5}{2000}.

\bibitem{Nishant}
\abtype{0}{N.~Chandgotia}.
\newblock personal communication.
\newblock \abtype{5}{2013}.

\bibitem{covenpaul}
\abtype{0}{E.~Coven\abphrase{0}M.~Paul}.
\newblock Endomorphisms of irreducible subshifts of finite type.
\newblock \abtype{2}{Mathematical systems theory} \abtype{3}{8}\abtype{4}{2},
  167--175 \abtype{5}{1974}.

\bibitem{yassawiodometer}
\abtype{0}{E.~Coven\abphrase{0}R.~Yassawi}.
\newblock Embedding odometers in cellular automata.
\newblock \abtype{2}{preprint} \abtype{5}{2009}.

\bibitem{surveyodometers}
\abtype{0}{T.~Downarowicz}.
\newblock Survey of odometers and toeplitz flows.
\newblock \abtype{2}{Contemporary Mathematics} \abtype{3}{385}, 7--38
  \abtype{5}{2005}.

\bibitem{emilygamber}
\abtype{0}{E.~Gamber}.
\newblock Equicontinuity properties of d-dimensional cellular automata.
\newblock \abtype{2}{Topology Proceedings} \abtype{3}{30}\abtype{4}{1}
  \abtype{5}{2006}.

\bibitem{mueqtds}
\abtype{0}{F.~Garc{\'\i}a-Ramos}.
\newblock A characterization of $\mu-$equicontinuity for topological dynamical
  systems.
\newblock \abtype{2}{arXiv:1309.0467 [math.DS]} \abtype{5}{2015}.

\bibitem{weakeq}
\abtype{0}{F.~Garc{\'\i}a-Ramos}.
\newblock Weak forms of topological and measure theoretical equicontinuity:
  relationships with discrete spectrum and sequence entropy.
\newblock \abtype{2}{Ergodic Theory and Dynamical Systems (to appear)
  arXiv:1402.7327 [math.DS]} \abtype{5}{2015}.

\bibitem{Gilman2}
\abtype{0}{R.~Gilman}.
\newblock Periodic behavior of linear automata.
\newblock \abphrase{7}\abtype{0}{J.~Alexander}\abphrase{4},
  \abtype{1}{Dynamical Systems}, \abphrase{8}
  1342\abphrase{5}\abtype{1}{Lecture Notes in Mathematics}, \abphrase{13}
  216--219. Springer Berlin / Heidelberg \abtype{5}{1988}.
\newblock 10.1007/BFb0082833.

\bibitem{Gilman1}
\abtype{0}{R.~H. Gilman}.
\newblock {Classes of linear automata.}
\newblock \abtype{2}{Ergodic Theory Dyn. Syst.} \abtype{3}{7}, 105--118
  \abtype{5}{1987}.

\bibitem{hedlund}
\abtype{0}{G.~Hedlund}.
\newblock Endomorphisms and automorphisms of the shift dynamical systems.
\newblock \abtype{2}{Mathematical System Theory} \abtype{3}{3}, 320 -- 375
  \abtype{5}{1969}.

\bibitem{kadelburg2005interchanging}
\abtype{0}{Z.~Kadelburg\abphrase{0}M.~M. Marjanovic}.
\newblock Interchanging two limits.
\newblock \abtype{2}{Enseign. Math} \abtype{3}{8}, 15--29 \abtype{5}{2005}.

\bibitem{stat}
\abtype{0}{J.~Kari\abphrase{0}S.~Taati}.
\newblock Statistical mechanics of surjective cellular automata.
\newblock \abtype{2}{preprint arXiv:1311.2319} \abtype{5}{2014}.

\bibitem{KurkaEQ}
\abtype{0}{P.~Kurka}.
\newblock Languages, equicontinuity and attractors in cellular automata.
\newblock \abtype{2}{Ergodic Theory and Dynamical Systems}
  \abtype{3}{17}\abtype{4}{02}, 417--433 \abtype{5}{1997}.

\bibitem{LindXOR}
\abtype{0}{D.~Lind}.
\newblock {Applications of ergodic theory and sofic systems to cellular
  automata.}
\newblock {Cellular automata, Proc. Interdisc. Workshop, Los Alamos/N.M. 1983,
  36-44 (1984).} \abtype{5}{1984}.

\bibitem{lindmarcus}
\abtype{0}{D.~Lind\abphrase{0}B.~Marcus}.
\newblock \abtype{1}{An Introduction to Symbolic Dynamics and Coding}.
\newblock Cambridge University Press \abtype{5}{1995}.

\bibitem{MassMartinezCesaro}
\abtype{0}{A.~Maass\abphrase{0}S.~Martinez}.
\newblock {On Ces\`aro limit distribution of a class of permutative cellular
  automata.}
\newblock \abtype{2}{J. Stat. Phys.} \abtype{3}{90}\abtype{4}{1-2}, 435--452
  \abtype{5}{1998}.

\bibitem{Meester01higher-dimensionalsubshifts}
\abtype{0}{R.~Meester\abphrase{0}J.~E. Steif}.
\newblock Higher-dimensional subshifts of finite type, factor maps and measures
  of maximal entropy.
\newblock \abtype{2}{Pac. J. Math} \abphrase{13} 497--510 \abtype{5}{2001}.

\bibitem{parry1964intrinsic}
\abtype{0}{W.~Parry}.
\newblock Intrinsic markov chains.
\newblock \abtype{2}{Transactions of the American Mathematical Society}
  \abphrase{13} 55--66 \abtype{5}{1964}.

\bibitem{pivatoalgebraic}
\abtype{0}{M.~Pivato\abphrase{0}R.~Yassawi}.
\newblock Limit measures for affine cellular automata.
\newblock \abtype{2}{Ergodic Theory and Dynamical Systems}
  \abtype{3}{22}\abtype{4}{04}, 1269--1287 \abtype{5}{2002}.

\bibitem{Sablik20081}
\abtype{0}{M.~Sablik}.
\newblock Directional dynamics for cellular automata: A sensitivity to initial
  condition approach.
\newblock \abtype{2}{Theoretical Computer Science} \abtype{3}{400}, 1 -- 18
  \abtype{5}{2008}.

\bibitem{TisseurEQ}
\abtype{0}{P.~Tisseur}.
\newblock Density of periodic points, invariant measures and almost
  equicontinuous points of cellular automata.
\newblock \abtype{2}{Advances in Applied Mathematics}
  \abtype{3}{42}\abtype{4}{4}, 504 -- 518 \abtype{5}{2009}.

\bibitem{walters2000introduction}
\abtype{0}{P.~Walters}.
\newblock \abtype{1}{An Introduction to Ergodic Theory}.
\newblock Graduate Texts in Mathematics. Springer-Verlag \abtype{5}{2000}.

\end{thebibliography}

\end{document}